\documentclass[11pt]{article}

\usepackage[T1]{fontenc}
\usepackage[utf8]{inputenc}
\usepackage[margin=1in]{geometry}
\usepackage{amsthm,amsmath,amsfonts,amssymb}
\usepackage[authoryear]{natbib}
\usepackage{xifthen,bbm}
\usepackage{bm,comment}
\usepackage{esvect}
\usepackage[colorlinks,citecolor=blue,urlcolor=blue,linkcolor=blue]{hyperref}
\usepackage{mathrsfs,xcolor}
\usepackage{subfig}
\usepackage{graphicx}
\usepackage{algorithm,float}
\usepackage{csquotes}
\usepackage{algpseudocode}
\allowdisplaybreaks
\setcitestyle{round}
\newtheorem{thm}{Theorem}[section]

\newtheorem{remark}[thm]{Remark}

\newtheorem{case}[thm]{Case}
\newtheorem{lem}[thm]{Lemma}
\newtheorem{theorem}[thm]{Theorem}

\newtheorem{proposition}[thm]{Proposition}
\theoremstyle{definition}
\newtheorem{definition}[thm]{Definition}

\renewcommand{\ge}{\geqslant}
\renewcommand{\le}{\leqslant}

\newcommand{\eps}{{\varepsilon}}

\newcommand{\tm}{{\widetilde{m}}}
\newcommand{\e}{{\mathbb{E}}}

\newcommand{\onb}{{\overline{\nabla}}}

\newcommand{\R}{{\mathbb{R}}}

\newcommand{\p}{{\mathbb{P}}}

\def\A{\bm{A}}
\def\R{\mathbb{R}}

\DeclareMathOperator*{\argmax}{argmax}

\title{Joint Estimation in Potts Model}
\author{Somabha Mukherjee\thanks{Department of Statistics and Data Science, National University of Singapore, Singapore. Email: \texttt{somabha@nus.edu.sg}}\and
Sumit Mukherjee\thanks{Department of Statistics, Columbia University, USA. Email: \texttt{sm3949@columbia.edu}}\and
Sayar Karmakar\thanks{Department of Statistics, University of Florida, USA. Email: \texttt{sayarkarmakar@ufl.edu}}}
\date{}

\begin{document}
\maketitle

\begin{abstract}
In this paper, we study estimation of parameters in a two-parameter Potts model with $q$ colors and coupling matrix $\A_N$. We characterize concrete sufficient conditions for existence of the pseudo-likelihood estimator of the Potts model, in terms of the local magnetic fields, and give sufficient conditions for the validity of the above characterization. We then provide sufficient criteria for estimation of both parameters at the optimal rate $\sqrt{N}$. In particular, if $\A_N$ is the scaled adjacency matrix of a graph $G_N$, then we show that joint estimation is possible if either $G_N$ has bounded degree or is irregular. In contrast, we give an example of a graph sequence $G_N$ which is approximately regular and dense, where no consistent estimator exists. We also show that one-parameter estimation at the optimal rate $\sqrt{N}$ holds under much milder conditions when the other parameter is known. Along the way, we develop a concentration result for mean-field Potts models using the framework of nonlinear large deviations. Compared to the Ising case, our results for the Potts case require a novel analysis across multiple colors.
\end{abstract}

\noindent\textbf{Keywords:} Potts model; pseudo-likelihood; random graphs; phase transitions.

\section{Introduction}\label{sec:intro}
The \emph{Potts model}, whose origin can be traced back to the 1900s (see \cite{AshkinTeller1943}), 
is a statistical physics model for capturing dependence in complex stochastic systems. 
What began as a generalization of the \emph{Ising model} (see \cite{ising}) in order to accommodate spins with more than  two values (see \cite{potts, wu1982potts}) has, over the past several decades, found widespread applications in a number of diverse fields including biomedical problems
\citep{Boas2018PottsBio,Moltchanova2005PottsHaplotype},
image processing and computer vision
\citep{Celeux2002PottsSegmentation,Levada2009PottsImage},
spatial statistics \citep{Zukovic2008SpatialPotts},
social sciences \citep{Bosconti2015SocialPotts}, finance \citep{Takaishi2005PottsFinance,Bornholdt2021PottsMarkets} and automata theory \citep{Graner1992CellularPotts}, among others. 

The $q$-state Potts model, for any positive integer $q$ can be described as a discrete probability distribution supported on the set $[q]^N$. Here and henceforth, the notation $[m]$, for any $m \in \mathbb{N}$, denotes the set $\{1, 2, \ldots, m\}$. The positive integer $N \in \mathbb{N}$ indicates the size of the system (number of interacting particles in the system) under consideration. This distribution is given by the probability mass function
\begin{equation}\label{eq:pmf}
    \p_{\beta,\bm {B}}(\bm {x}) := \frac{1}{Z_N(\beta,\bm {B})} \exp\left(\frac{\beta}{2} \sum_{1\le i,j\le N} a_{ij}\mathbbm{1}_{x_i=x_j} + \sum_{i=1}^N \sum_{r=1}^{q} B_r\mathbbm{1}_{x_i=r}\right) \text{ for } \bm x \in [q]^{N},
\end{equation}
where $\beta>0$ represents the \emph{inverse temperature}, $\bm {B} := (B_1,\ldots,B_{q-1})\in \mathbb{R}^{q-1}$ represents the \emph{magnetic field} vector, and $\bm A_N := ((a_{ij}))_{1\le i,j\le N}$ is a symmetric matrix,  with zeros on the diagonal. We will refer to $\bm A_N$ as the \emph{coupling/interaction matrix}. Note here that we did not include a non-zero magnetic field parameter $B_q$ for identifiability reasons, since otherwise, the model remains unchanged if the same constant is added to all the magnetic fields. Throughout the paper, we will use by convention the notation $B_q := 0$.
Some of the commonest examples of coupling matrices are suitably scaled adjacency matrices of graphs, defined via
\begin{equation}\label{adjacency_coupling_defn}
a_{ij} := \frac{N}{2|E(G_N)|} \mathbbm{1}(i~\text{and}~j~\text{form an edge in}~G_N), \text{ for all } i, j \in [N],
\end{equation}
where $G_{N}$ is any graph on vertex set $[N]$ and edge set $E(G_N)$. 

Equation \eqref{eq:pmf} takes the form of a discrete exponential family with natural parameters $\beta>0$ and $\bm B\in \R^{q-1}$. The problem we address in our paper is the estimation of these parameters given a \emph{single} sample $\bm X := (X_1,\ldots,X_N)$ from this model. The unavailability of multiple, mutually independent random vectors sampled from the same distribution (as is typical in epidemics, elections, or criminal activity, where the underlying network is typically observed
only once, and replications are spatio-temporally dependent) is what poses the primary challenge in this problem. Throughout this paper, we assume that all entries of the matrix $\bm A_N$ are non-negative and completely known. 

The choice $q=2$ corresponds to the Ising model, and in this special case, changing the domain of ${\bm x}$ from $[2]^N=\{1,2\}^N$ to $\{\pm 1\}^N$, one can write
\[\sum_{i,j=1}^na_{ij}1\{x_i=x_j\}=\frac{1}{2}\sum_{i,j=1}^N a_{ij} (1+x_ix_j),\quad \sum_{i=1}^N1\{x_i=1\}=\frac{N}{2}(1+\bar{x}).\]
Plugging these in \eqref{eq:pmf}, the pmf of the Ising model on $\{\pm 1\}^N$ can thus be written as
\begin{equation}
\p_{\beta,B}(\bm {X} = \bm {x}) \propto \exp\left\{\frac{\beta}{4} \bm{x}'\bm A_{N}\bm{x} + \frac{B_1}{2}\sum_{i=1}^{N}x_{i}\right\}, \text{ for } \bm x = (x_{1}, \ldots, x_{N}) \in \{\pm 1\}^{N},\label{ising_case_eq}
\end{equation}
Statistical inference for general Ising and Potts models traces back to the seminal work \cite{estimation_15}, which analyzed the Ising model \eqref{ising_case_eq} in the absence of an external field ($B_1=0$). Allowing the coupling matrix $\bm A_N$ to have both positive and negative entries, under bare minimal conditions \cite{estimation_15} establishes $\sqrt{N}$-consistency of the \emph{maximum pseudo-likelihood estimator} of the natural parameter $\beta$ of this one-parameter exponential family. In particular, 
 the results of this paper apply to the well known spin glass models such as 
the celebrated 
Sherrington-Kirkpatrick model, and the Hopfield model of neural networks. One of the many open questions raised in \cite{estimation_15} speculates whether the methods developed in \cite{estimation_15} can be adapted for estimation in multi-parameter models. As an answer to this, \cite{ghosal2020joint} considers the two-parameter Ising model in \eqref{ising_case_eq} when the coupling matrix $\bm A_{N}$ has non-negative entries, and studies the joint estimation of the inverse temperature parameter and the magnetization parameter, i.e. the pair \ $(\beta, B_1)$. In this paper, we will study the analogous question of joint estimation of $(\beta,{\bf B})$ for the more general Potts model \eqref{eq:pmf}. 


\subsection{Literature Review}

The problem of statistical inference in statistical physics models has a body of growing literature, and here we cite some of the relevant literature close to our work. Some of the earliest rigorous studies for the Curie--Weiss Ising model were done in \cite{ellis1980limit, ellis1985entropy}, which established the CLT for the magnetization for the Curie-Weiss Ising model (\eqref{ising_case_eq} with $\A_N(i,j)=N^{-1}\mathbbm{1}_{i\ne j}$). Subsequently, \cite{comets} studied asymptotics of the MLE in the Curie-Weiss Ising model, and showed that one-parameter estimation is possible if the other parameter is known. Going beyond the Curie-Weiss Ising model in a significant way, \cite{estimation_15} studies the performance
of the pseudo-likelihood estimator for the one-parameter Ising model  with the temperature parameter $\beta>0$ unknown for a general matrix $\A_N$, with the magnetization parameter $B_1=0$. In this paper the author allows the coupling matrix $\A_N$ to have both positive and negative values, and gives a sufficient criterion for estimation of $\beta$ at the optimal rate $\sqrt{N}$, which covers both graphical models as well as spin glass models. In a follow up work, in \cite{estimation_14} the authors extend this to show that the estimation rate for $\beta$ depends on the order of the log normalizing constant $\log Z_N(\cdot,{\bm B})$ in a local neighborhood of the truth $\beta$. Using this, they demonstrate phase transitions in the rate of the pseudo-likelihood estimator, which is typically dictated by the critical temperature of the Ising model. The problem of estimating both the parameters $(\beta,B_1)$ in the Ising model was first studied in \cite{ghosal2020joint}, where the authors assume that the coupling matrix $\A_N$ is non-negative entry-wise. Under this assumption, \cite{ghosal2020joint} show that joint estimation is possible at the optimal rate $\sqrt{N}$ if either the coupling matrix is irregular (see \eqref{irregap}) or non-mean field (see \eqref{bddegr}). In contrast, if the coupling matrix is both regular and mean-field, they give an example to show that joint consistent estimation may be impossible. In a more recent paper \cite{ChenSenWu2024o}, the authors show that joint estimation is possible for spin glass models, where the coupling matrix $\A_N$ can take both positive and negative values.

\textcolor{black}{Prior to this work, a number of studies have explored statistical inference in Potts models and more general Markov random fields (see, for example, \cite{estimation_1, estimation_2, estimation_3, estimation_4, estimation_5, estimation_6, estimation_7, estimation_8, estimation_9, estimation_10, estimation_11}). While these contributions provide valuable insights and methodological developments, a fully rigorous treatment of consistency for joint parameter estimators in general Potts models with $q>2$ colors has not yet been established. To the best of our knowledge, the present work is the first to address this question. In fact, even in the single-parameter setting, rigorous results on consistent estimation in the Potts framework are largely absent, with the notable exception of the Curie--Weiss Potts model (see \cite{ellis1992, snbh1, snbh2}). As indicated above, studying general Potts models requires the development of new analytical tools tailored to Potts models, which we expect will also be useful for future investigations.}

\textcolor{black}{A natural motivation for our work arises from the extensive literature on \emph{exponential random graph models} (ERGMs), which can be thought of as analogues of the Ising model with higher dimensional tensors. Sampling from ERGMs plays a central role in both parameter estimation and hypothesis testing, and Glauber dynamics provide a standard and widely used approach for this purpose. The mixing properties of Glauber dynamics in ERGMs have been studied in several key works, including \cite{bhamidi2011mixing} and \cite{DeMuseEaslickYin2019}, which demonstrate interesting phase transition properties in the mixing rate. In fact, even in the specialized Curie-Weiss Ising model, mixing rates can be either polynomial or exponential depending on the parameter regime. For details, we refer the interested reader to \cite{LevinLuczakPeres2010, DingLubetzkyPeres2009, SamantaMukherjeeZhang2024} and references therein. For the Curie--Weiss Potts model, \cite{HeLok2025} have studied mixing rate for Glauber dynamics, whereas \cite{eichels}, \cite{ellis1990limit} and \cite{gandolfo} study CLT for the magnetization. On the inferential side, the problem of parameter estimation in ERGMs has also received significant attention which demonstrates challenges of their own; see, for instance, \cite{egrm_2, MukherjeeXu2023, egrm_4}.}
\textcolor{black}{Similar to the Potts case, the most well-studied tensor for higher order binary models is the complete tensor case (p-spin Curie Weiss model), which has been studied recently in  \cite{tensor1, tensor2, tensor3, tensor4} using the perfect symmetry of the complete tensor.} 

\textcolor{black}{Going in a different direction, another question of interest is the problem of structure learning, i.e.~to recover the whole graph/matrix $\A_N$, which is a high-dimensional parameter estimation problem. Indeed, in this case one Ising/Potts sample will not suffice, and one needs access to i.i.d.~samples. In this setting,  \cite{estimation_13,estimation_16,comp_2, lokhov1, lokhov2} study graph recovery and support recovery, and establish tight sample complexity bounds, for Ising models. 
Other questions of interest for Ising-type models include community detection on SBM \citep{berthet2019exact}, property testing  \citep{neykov2019property}, and structure detection \citep{cao2022hightemperature}.}

\subsection{Our contributions}

In this paper we study bivariate estimation of parameters in a Potts model with $q$ colors, using the pseudo-likelihood method of \cite{besag1974spatial,besag1975statistical}. Prior to our work, the existing literature focuses exclusively on the Ising case ($q=2$), or on the Curie--Weiss Potts case. Going from the Ising to general Potts case requires us to investigate conditions under which the pseudo-likelihood estimator exists (see \eqref{psiNdef96}). The exact characterization is delicate for $q>2$ colors, more so because the characterization for $q=2$ in \cite[Theorem 1.2 (a)]{ghosal2020joint} is not entirely correct. The correct characterization in the Ising case was established recently in \cite{ChenSenWu2024o}, and in this work we establish the corresponding result for the Potts model. In particular, we require that there exist two colors for which the corresponding local fields are well separated (see Theorem \ref{thm:theorem1} for details). Another challenge is the characterization of the subset of the parameter space for the Curie-Weiss Potts model where the local magnetization vector has $\sqrt{N}$ fluctuation, in terms of the Hessian of the variational objective $H_{\beta,\bm B}(\cdot)$ (see \eqref{Hdefn6882}). This is carried out in Lemma \ref{c222}, utilizing tools from linear algebra, coupled with a careful application of the inverse function theorem. This lemma is crucially used to show non-existence of consistent estimators for Potts models on dense Erd\H{o}s-R\'enyi graphs. Showing that the estimation is possible at the optimal rate of $\sqrt{N}$ in the irregular case (Theorem \ref{irrgr}) is more delicate for the Potts case with $q>2$ colors. A fine analysis is needed to show that the RHS of \eqref{fstpp3} is strictly positive, which translates into a variation bound for the gradient of the free energy function $\psi_N$ (see \eqref{psiNdef96}) from its average.
But perhaps most significantly, utilizing the non-linear large deviations framework developed in \cite{chatterjee2016nonlinear} and \cite{mukherjeebasak}, in this paper we  develop a concentration result for mean-field Potts models (see Lemma \ref{epnt376}). This result shows that the local fields for all colors are close to the optimizers of the variational problem resulting from the non-linear large deviations. This is of possible independent interest, particularly if one wants to go beyond the law of large numbers, and study a CLT under Potts models.

\subsection{Main Results}
In this section, we state the main results of this paper. As mentioned above, our main goal is to derive a consistent estimator of the parameter $(\beta,\bm B)$, when a single vector ${\bm X}$ is observed from the model \eqref{eq:pmf}. The classical method of maximum likelihood (ML) estimation is not practical in this  framework, because of the presence of the intractable normalizing constant $Z_N(\beta,\bm B)$, which is hard to compute and difficult to approximate using MCMC techniques; see \cite{bhamidi2011mixing}. A computationally efficient alternative in the literature \cite{besag1974spatial, besag1975statistical, estimation_15, estimation_14, ghosal2020joint, daskalakis2020logistic} is to consider the maximum pseudo-likelihood (MPL) estimator, given by:

\begin{equation*}\label{mpl_defcom}
    (\hat{\beta}_N, \hat{\bm B}_N) := \argmax_{(\beta,\bm B)\in \R^q} L_N(\beta,\bm B) := \argmax_{(\beta,\bm B)\in \R^q}\prod_{i=1}^N \p_{\beta, \bm B}(X_i|(X_j)_{j\ne i}) 
\end{equation*}
 provided the pseudo-likelihood function $L_N$ has a unique maximizer. Indeed, the conditional distribution of $X_i$ given $(X_j)_{j\ne i}$ is easy to compute, and is given by:
\begin{equation}\label{defcondprob881}
    \p_{\beta,\bm B}(X_i=r|(X_j)_{j\ne i})=\frac{\exp\left\{\beta m_{i,r}(\bm X) +   B_{r} \right\}}{\sum_{s=1}^{q} \exp\left\{\beta m_{i,s}(\bm X) + B_{s}\right\}}=:\theta_{i,r}({\bm X}) 
\end{equation}
where $m_{i,r}(\bm x) := \sum_{j=1}^N a_{ij}\mathbbm{1}_{x_j=r}$ for $\bm x\in [q]^N$. We will often drop ${\bm X}$ from the notation $\theta_{i,r}$ for simplicity.
The pseudo-likelihood function $L_N$ is thus given by:
$$L_N(\beta,\bm B) := \frac{\exp\left\{\beta\sum_{i=1}^{N}\sum_{r=1}^q m_{i,r}(\bm X)\mathbbm{1}_{X_i=r} + \sum_{i=1}^{N} \sum_{r=1}^{q} B_{r} \mathbbm{1}_{X_{i} =r}\right\}}{\prod_{i=1}^{N} \sum_{r=1}^{q} \exp\left\{\beta m_{i,r}(\bm X) + B_{r} \right\}}$$
and hence, the log pseudo-likelihood function is given by:
\begin{align}\label{eq:ln}
&\ell_N(\beta,\bm B) :=\notag\\ &\beta\sum_{i=1}^{N}\sum_{r=1}^q m_{i,r}(\bm X)\mathbbm{1}_{X_i=r} + \sum_{i=1}^{N} \sum_{r=1}^{q} B_{r} \mathbbm{1}_{X_{i} =r} - \sum_{i=1}^N \log\left(\sum_{r=1}^{q} \exp\left\{\beta m_{i,r}(\bm X) + B_{r} \right\}\right).
\end{align}
The MPL estimator can be obtained by setting the partial derivatives of $\ell_N$ to $0$, which in turn requires the exact expressions of these partial derivatives. Towards this, we have:
\begin{equation}\label{parbeta}
\frac{\partial \ell_N(\beta,\bm B)}{\partial \beta} = \sum_{i=1}^{N}\sum_{r=1}^q m_{i,r}(\bm X)\mathbbm{1}_{X_i=r} - \sum_{i=1}^N \frac{\sum_{r=1}^q m_{i,r}(\bm X) \exp\left\{\beta m_{i,r}(\bm X) + B_{r} \right\}}{\sum_{r=1}^q \exp\left\{\beta m_{i,r}(\bm X) + B_{r} \right\}}~,
\end{equation}

\begin{equation}\label{parB}
\frac{\partial \ell_N(\beta,\bm B)}{\partial B_s} = \sum_{i=1}^{N} \mathbbm{1}_{X_i=s} - \sum_{i=1}^N \frac{\exp\left\{\beta m_{i,s}(\bm X) +B_{s} \right\}}{\sum_{r=1}^q \exp\left\{\beta m_{i,r}(\bm X) + B_{r} \right\}}\quad(1\le s\le q-1)~.
\end{equation}
Henceforth, we will call the equation $\nabla L_N(\beta,\bm B) = \boldsymbol 0$,
the pseudo-likelihood equation. Of course, if the MPL estimator $(\hat{\beta}_N, \hat{\bm B}_N)$ exists, then it is a solution of the pseudo-likelihood equation.

Before stating our first main result about the behavior of the MPL estimator, we introduce two assumptions on the coupling matrix  $\bm A_N$ that we will assume throughout the rest of the paper:
\begin{align}\label{as1}
  \sup_{N\ge 1} \|\bm A_N\|_1=\sup_{N\ge 1}~\max_{i\in [N]} \sum_{j=1}^N a_{ij} =: \gamma < \infty,\\
\label{as2}
    \liminf_{N\to\infty} \frac{{\bf 1}'\bm A_N {\bf 1}}{N} =\liminf_{N\rightarrow \infty} ~\frac{1}{N}\sum_{1\le i,j\le N} a_{ij} >0.
\end{align}
Here $\|.\|_1$ denotes the $\ell_1$ operator norm of a matrix, and ${\bf 1}$ is the constant vector of size $N$ with all entries $1$. These conditions are standard in the literature for inference in Ising models, which corresponds to the case $q=2$ (see Eq. (1.2) and (1.3) in \cite{ghosal2020joint}; also see \cite{deb2020detecting, mukherjee2018global}). Note that when $\bm A_N$ is the scaled adjacency of a graph \eqref{adjacency_coupling_defn}, condition \eqref{as1} becomes equivalent to the maximum degree $d_{\max}(G_N)$ of $G_N$ being of the same order as its average degree $\bar{d}(G_N) := \frac{1}{N}\sum_{i=1}^N d_i(G_N)$, where $d_i(G_N)$ denotes the degree of the vertex $i$ in $G_N$ (i.e. $d_{\max}(G_N) = O(\bar{d}(G_N))$). What this essentially says, is that there is no vertex in the graph with \textit{atypically high} degree. Condition \eqref{as2} is always true in this case, and in fact, one has
$$\frac{1}{N}\sum_{1\le i,j\le N} a_{ij} = 1.$$

We are now ready to state the first main result of this paper, which gives an upper bound to the estimation error in terms of the quantity $T_N(\bm x)$ defined as:

 \begin{equation}\label{eq:Tndefn}
    	T_N(\bm x):=\sum_{1 \leqslant  r<s \leqslant  q} \left( \frac{1}{N}\sum_{i=1}^N (m_{i,r}(\bm x)-m_{i,s}(\bm x))^2 - (\overline{m}_r(\bm x)-\overline{m}_s(\bm x))^2\right).
    \end{equation}
    where $\overline{m}_r(\bm x) := N^{-1}\sum_{i=1}^N m_{i,r}(\bm x)$. 

\begin{theorem}\label{thm:theorem1}
Suppose $\bm X$ is a sample from the Potts model \eqref{eq:pmf}, where the coupling matrix $\bm A_N$ has non-negative entries, and satisfies conditions \eqref{as1} and \eqref{as2}. If $(\beta,\bm B) \in \Theta := (0,\infty)\times \mathbb{R}^{q-1}$, then the following conclusions hold:
\begin{enumerate}
    \item[(a)] The MPL estimator $(\hat{\beta}_N, \hat{\bm B}_N)$ exists if $\bm X \in \Omega_N \bigcap \Lambda_N$, where 
    \begin{align*}
  \Lambda_N :=&\{{\bm y}\in [q]^N:\text{ for every }r\in [q] \text{ there exists }i\in [N], \text{ such that }y_i=r\},\\
    \Omega_N :=&
     \{\bm y \in [q]^N: \text{there exist}~1\le r < s \le q~\text{and}~ 1\le i,j,k,
    \ell~\text{all distinct, such that}\\&~\{y_i,y_j\} = \{r,s\}=\{y_k,y_\ell\}, \{\tm_i^{r,s}({\bm y}), \tm_j^{r,s}({\bm y})\} < \{\tm_k^{r,s}({\bm y}),\tm_\ell^{r,s}({\bm y})\}\}
     \end{align*}
     where
      $\tm_u^{r,s}(\bm y) := m_{u,r}(\bm y) - m_{u,s}(\bm y).$
\vspace{0.1in}
    \item[(b)] If $T_N(\bm X)^{-1}=o_\p(\sqrt{N})$ and the MPL estimator exists,
then 
\begin{eqnarray*}\label{eq:consistency}
\|(\hat{\beta}_N-\beta, \hat{\bm B}_N-\bm B) \|_2=O_{\p}\left(\frac{1}{\sqrt{N}T_N(\bm X)}\right).
\end{eqnarray*}

    \item[(c)]
    In particular, if \textcolor{black}{$T_N(\bm X)^{-1}=O_\p(1)$}, 
    then 
    \begin{equation}\label{joincg5}
        \p_{\beta,\bm B} \left(\bm X \in \Omega_N\bigcap \Lambda_N\right) \rightarrow 1~\text{as}~N\rightarrow \infty.
    \end{equation}
   Consequently, the MPL estimator $(\hat{\beta}_N, \hat{\bm B}_N)$ exists with probability tending to $1$, and satisfies
    \begin{equation}\label{joincg6}
        \|(\hat{\beta}_N-\beta, \hat{\bm B}_N-\bm B) \|_2=O_{\p}\left(\frac{1}{\sqrt{N}}\right).
    \end{equation}

\end{enumerate}
\end{theorem}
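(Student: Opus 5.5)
The plan rests on the fact that $\ell_N$ in \eqref{eq:ln} is a concave function of $\bm\eta:=(\beta,\bm B)\in\R^q$. Indeed, $\ell_N$ is a linear function minus a sum of log-sum-exp terms composed with the affine maps $\bm\eta\mapsto(\beta m_{i,r}+B_r)_{r\in[q]}$. I will handle the three parts separately: existence through recession directions, the rate through a gradient bound plus Hessian curvature, and (c) by showing that $T_N$ bounded below forces $\Omega_N\cap\Lambda_N$ with high probability.

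\textbf{Part (a).} A concave function has a maximizer when it tends to $-\infty$ along every ray $\bm\eta_0+t\bm v$ with $\bm v=(v_0,\bm v_B)\neq\bm 0$. It has a unique one when it is strictly concave as well. Along such a ray, the $i$-th summand behaves like
\[
t\Big(v_0m_{i,X_i}+v_{X_i}-\max_r\big(v_0m_{i,r}+v_r\big)\Big)+O(1),
\]
where $v_q:=0$. Each bracket is $\le 0$, so it suffices to find one index $i$ with a strictly negative bracket.

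\emph{Case $v_0=0$.} Then $\bm v_B\neq 0$, so some $r$ has $v_r<\max_s v_s$. On $\Lambda_N$ some vertex carries color $r$, and its bracket is negative.

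\emph{Case $v_0\ne0$.} Suppose every bracket vanished. Take $r,s,i,j,k,\ell$ as in the definition of $\Omega_N$. A vertex $u$ of color $r$ would satisfy $v_0\tm_u^{r,s}\ge v_s-v_r$, and a vertex of color $s$ would satisfy $v_0\tm_u^{r,s}\le v_s-v_r$. The strict ordering of the pairs in $\Omega_N$, together with each pair containing one vertex of each color, then contradicts any sign of $v_0$.

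Strict concavity follows from the same dichotomy, since the Hessian kernel consists of directions along which every log-sum-exp term is affine.

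\textbf{Part (b).} I use the standard argument for concave M-estimators.

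\emph{Gradient at the truth.} By \eqref{parbeta} and \eqref{parB}, $\nabla\ell_N(\bm\eta)=\sum_i\bm f_i(\bm X)$, where $\bm f_i$ has conditional mean zero given $(X_j)_{j\ne i}$. Expanding the second moment gives $\e\|\nabla\ell_N(\bm\eta)\|^2=O(N)$. To control the cross terms I will use the exchangeable-pair/conditioning argument of \cite{estimation_15,ghosal2020joint}: replace $X_j$ in $\bm f_i$ by an independent draw from its conditional law, and bound the resulting change by $\sum_j a_{ij}$, using \eqref{as1}. Hence $\|\nabla\ell_N(\bm\eta)\|=O_\p(\sqrt N)$.

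\emph{Curvature.} For a unit vector $\bm v$,
\[
-\bm v'\nabla^2\ell_N(\bm\eta')\bm v=\sum_i\operatorname{Var}_{\theta_i(\bm\eta')}\big(v_0m_{i,R}+v_R\big),\qquad R\sim\theta_{i,\cdot}(\bm\eta').
\]
Each $\theta_{i,r}$ is bounded below by a constant $c>0$ uniformly over $\bm\eta'$ in a compact ball. This uses \eqref{as1}, which gives $m_{i,r}\le\gamma$. Therefore the variance is at least $c'\sum_{r<s}\big(v_0\tm_i^{r,s}+v_r-v_s\big)^2$. Minimizing $\sum_i\sum_{r<s}(v_0\tm_i^{r,s}+v_r-v_s)^2$ over $\bm v_B$ is a least-squares problem; its value is $\ge c''Nv_0^2T_N$, up to a factor, and the $v_0$-free part handles the remaining directions. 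This gives $\lambda_{\min}\big(-\nabla^2\ell_N\big)\gtrsim NT_N$ on a fixed ball around $\bm\eta$.

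\emph{Conclusion.} Concavity then yields
\[
\|\hat{\bm\eta}-\bm\eta\|\lesssim\frac{\|\nabla\ell_N(\bm\eta)\|}{NT_N}=O_\p\big((\sqrt NT_N)^{-1}\big).
\]
The assumption $T_N^{-1}=o_\p(\sqrt N)$ is exactly what keeps this bound inside the ball, so the local curvature bound applies.

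\textbf{Part (c).} I must show $\p(\Omega_N\cap\Lambda_N)\to1$ when $T_N^{-1}=O_\p(1)$. Since $\theta_{i,r}\ge c$ for all $i$ and $r$, the spins are conditionally non-degenerate. A concentration argument, for instance conditional Chebyshev with the same pair bounds as above, gives $\#\{i:X_i=r\}\ge cN/2$ w.h.p., hence $\Lambda_N$.

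For $\Omega_N$: $T_N\ge\delta$ means some pair $(r,s)$ has $\frac1N\sum_i(\tm_i^{r,s}-\overline{\tm}^{r,s})^2\ge\delta/\binom q2$. Each $|\tm_i^{r,s}|\le\gamma$, so a positive fraction of vertices have $\tm_i^{r,s}$ below $\overline{\tm}^{r,s}-\epsilon$, and a positive fraction have it above $\overline{\tm}^{r,s}+\epsilon$. The values of $\tm_i^{r,s}$ depend on $\bm X$ itself, so I need colors $r$ and $s$ to appear in both groups. I will argue this with conditional probabilities $\ge c$ and a second-moment bound. Since $m_{i,\cdot}$ depends on $X_i$ only through off-diagonal terms, resampling a single coordinate changes the $\tm$'s by at most $\max_j a_{ij}$.

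The \textbf{main obstacle} is this step of (c), together with making the least-squares curvature bound in (b) match $T_N$ precisely across multiple colors. The latter is the ``fine analysis across colors'' the paper refers to.
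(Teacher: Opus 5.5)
Your parts (a) and (b) are sound. In (a), your ray argument is a tidier form of the paper's argument. Along a ray, the $i$-th summand is $t\,(v_0m_{i,X_i}+v_{X_i}-\max_r(v_0m_{i,r}+v_r))+O(1)$. This is made strictly negative for some $i$ by $\Lambda_N$ when $v_0=0$, and by the separated pairs in $\Omega_N$ when $v_0\neq 0$. The paper instead first bounds $|\beta|$ using the four vertices from $\Omega_N$, then bounds $\bm B$ using two vertices supplied by $\Lambda_N$. Your version also gives uniqueness directly.

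In (b), your least-squares step closes exactly. With $v_q:=0$, centring each pair $r<s$ kills the cross term, so
\[
\sum_{i=1}^N\sum_{r<s}\big(v_0\tm_i^{r,s}+v_r-v_s\big)^2=Nv_0^2\,T_N(\bm X)+N\,\bm w^\top(q\bm I-\bm J)\bm w,\qquad \bm w:=\bm v_B+v_0\big(\overline m_r(\bm X)-\overline m_q(\bm X)\big)_{r<q},
\]
and $\lambda_{\min}(q\bm I-\bm J)=1$. Splitting on whether $|v_0|$ is small gives $\lambda_{\min}(-\nabla^2\ell_N)\gtrsim NT_N(\bm X)$ near the truth. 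This is the Schur complement behind the paper's identity $\det(\bm H_N)=N^qq^{q-2}T_N(\bm X)$ in Lemma \ref{hesdet}. Your second-moment gradient bound is an adequate substitute for Lemma \ref{bp12} in (b).

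The gap is the $\Omega_N$ half of (c), which you only announce. The natural statistic for it is $\sum_i(\mathbbm{1}_{X_i=a}-\theta_{i,a}(\bm X))\,\mathbbm{1}\{\tm_i^{a,b}(\bm X)>\overline{\tm}^{a,b}(\bm X)+\epsilon\}$, and the cross-term argument you cite does not control it.
\begin{itemize}
\item Its weight is an indicator. Changing one spin $X_j$ moves each $\tm_i^{a,b}$ by at most $a_{ij}$, but that can switch the indicator for every $i$ whose $\tm_i^{a,b}$ lies near the threshold. So the estimate ``change $\lesssim a_{ij}$'', which makes the cross terms sum to $O(N)$, holds for $\tm_i^{a,b}$ but not for membership in your groups.
\item Its threshold is random. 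It even depends on $X_i$ through $\overline{\tm}^{a,b}$, so the $i$-th summand is no longer centred given $(X_j)_{j\neq i}$.
\item The pair $(a,b)$ is itself chosen from the data.
\end{itemize}

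The missing construction, which the paper uses, is a smoothed test function with deterministic thresholds. Take $\tau$ from a fixed grid $F=\{p\gamma/R:\ p\in\mathbb{Z},\ |p|\le R+1\}$ whose size does not depend on $N$. Then a union bound over $\tau\in F$ and over colour pairs is legitimate. Replace the indicator by a Lipschitz, non-decreasing $g$ with $g=0$ on $(-\infty,0]$ and $g>0$ on $(0,\infty)$.

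Now consider $\sum_i(\mathbbm{1}_{X_i\neq a}-\p(X_i\neq a\mid X_{-i}))\,g(\tm_i^{a,b}(\bm X)-\tau)$. Its weights depend only on $X_{-i}$ and are Lipschitz in them, so it is $O_\p(\sqrt N)$ by Lemma \ref{bp12}; your second-moment argument also works here. On the other hand, suppose no vertex with $\tm_i^{a,b}\ge\tau$ has colour $a$. Then every non-zero summand equals $\theta_{i,a}(\bm X)\,g(\tm_i^{a,b}-\tau)\ge\alpha\,g(\tm_i^{a,b}-\tau)$, where $\alpha>0$ is the uniform lower bound on the conditional probabilities. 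So the sum is deterministically at least $\alpha\eta N g(\gamma/R)$, provided $\tau$ is chosen so that at least $\eta N$ vertices have $\tm_i^{a,b}\ge\tau+\gamma/R$ (the paper's $p_0$), with the analogous requirement below $\tau$. The same statistic with colour $b$, and with $g(\tau-\tm_i^{a,b})$ for the lower group, handles the other three events. Vertices on opposite sides of $\tau$ are then automatically strictly separated, as $\Omega_N$ requires.

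Your variance-plus-boundedness observation can supply this choice of $\tau$ after rounding to the grid. Without the smoothing and the discretisation, however, the step does not go through.
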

\vspace{0.1in}

\begin{remark}\label{refcorl1}
Note that we are able to prove the existence of the joint MPL estimator 
$(\hat{\beta}_N,\hat{\bm B}_N)$ (with high probability) only in the regime $T_N(\bm X)^{-1} = O_\p(1)$, 
but not in the entire regime $T_N(\bm X)^{-1} = o_\p(\sqrt{N})$. 
This suffices to guarantee the $\sqrt{N}$-consistency of the MPL estimator whenever 
$T_N(\bm X)^{-1} = O_\p(1)$ (part (b) of Theorem~\ref{thm:theorem1}). 
In particular, this setting covers the cases where $\bm A_N$ is the adjacency matrix 
of a sequence of bounded-degree graphs (see Section~\ref{bdegr}) or asymptotically 
irregular graphs (see Section~\ref{sec:irregnm}). 
Part (c) extends the result to the full regime $T_N(\bm X)^{-1} = o_\p(\sqrt{N})$, 
though only under the additional assumption that the MPL estimator exists in this regime.
\end{remark}

The proof of Theorem \ref{thm:theorem1} is given in Section \ref{sec:int}. We now study the two most general types of interaction structures to which the joint consistency result, Theorem~\ref{thm:theorem1}, applies. In fact, in both these cases, $T_N(\bm X)^{-1} = O_\p(1)$, and hence, the joint MPL estimator is $\sqrt{N}$-consistent.

\subsubsection{Non mean-field interactions}\label{bdegr} Throughout this subsection, we will assume that:
\begin{equation}\label{bddegr}
    \liminf_{N\rightarrow \infty} \frac{1}{N}\sum_{1\le i,j\le N} a_{ij}^2 > 0~.
\end{equation}
Condition \eqref{bddegr} is often referred to as the \textit{non mean-field condition}. Note that if the coupling matrix is the scaled adjacency of a graph, then condition \eqref{bddegr} simply means that the average degree of the graph is bounded. This, coupled with condition \eqref{as1} implies that the maximum degree of the graph is bounded. The following theorem shows that the joint MPL estimator is $\sqrt{N}$-consistent for the Potts model with interaction matrix $\bm A_N$ satisfying \eqref{bddegr}.

\begin{theorem}\label{bdgr}

\textcolor{black}{Suppose $\bm X$ is an observation from the Potts model \eqref{eq:pmf} where the interaction matrix $\bm A_N$ satisfies} the conditions \eqref{as1}, \eqref{as2} and \eqref{bddegr}. Then,
$$\|(\hat{\beta}_N-\beta,\hat{\bm B}_N - \bm B)\|_2 = O_\p\left(\frac{1}{\sqrt{N}}\right)~.$$
\end{theorem}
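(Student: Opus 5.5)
By Theorem \ref{thm:theorem1}(c), it suffices to show that $T_N(\bm X)^{-1}=O_\p(1)$. That is, we need $\delta>0$ with $\p_{\beta,\bm B}(T_N(\bm X)>\delta)\to 1$.

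\textbf{Step 1: reduce to one pair of colors.} We use the variance-type identity
\[
T_N(\bm X)=\sum_{r<s}\frac1N\sum_{i=1}^N\bigl(\tm_i^{r,s}(\bm X)-\overline{\tm}^{r,s}(\bm X)\bigr)^2 .
\]
Every summand is non-negative, so it suffices to lower bound the single term with $(r,s)=(1,q)$. That term is the empirical variance of $\tm_i:=m_{i,1}(\bm X)-m_{i,q}(\bm X)$.

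\textbf{Step 2: build vertex blocks where $\tm$ has a guaranteed jump.} Under \eqref{as1} and \eqref{bddegr}, the set $S:=\{(i,j): a_{ij}\ge c\}$ has size at least $c'N$ for suitable $c,c'>0$. This holds because $\sum a_{ij}^2\le \max_{ij}a_{ij}\sum a_{ij}$, and a positive fraction of the mass of $\sum a_{ij}^2$ must come from entries bounded below. By \eqref{as1}, each vertex appears in at most $\gamma/c$ pairs of $S$. A greedy selection then yields $\Omega(N)$ pairs $(i_k,j_k)$ whose neighbourhoods are pairwise separated: no vertex of one selected pair interacts with any vertex of another selected pair.

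Now fix such a pair and condition on all spins outside $\{j_k\}$. Switching $X_{j_k}$ from color $1$ to color $q$ changes $\tm_{i_k}$ by $2a_{i_kj_k}\ge 2c$. The conditional law of $X_{j_k}$ gives each color probability at least $p_0>0$, where $p_0$ depends only on $\beta,\gamma,\bm B,q$, since the local fields are bounded by $\gamma$.

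\textbf{Step 3: turn the jumps into variance with high probability.} Use the same graph distance separation to choose a further subfamily in which the vertices $j_k$ are at graph distance at least $3$ from one another, so that the different $j_k$ influence disjoint sets of $\tm_{i}$'s. Condition on all spins except $\{X_{j_k}\}_k$. Given this conditioning, the $X_{j_k}$ are conditionally independent, each taking colors $1$ and $q$ with probability at least $p_0$.

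For each $k$, consider $\bigl(\tm_{i_k}-\overline{\tm}\bigr)^2+\bigl(\tm_{i'_k}-\overline{\tm}\bigr)^2$, where $i'_k$ is a second designated neighbour of $j_k$ chosen to fix the comparison. Simpler still: use $\frac1N\sum_i(\tm_i-\overline{\tm})^2\ge \frac1N\sum_{i}(\tm_i-v)^2$ minimised over $v$. For each fixed $v$, at least one of the two values of $\tm_{i_k}$ (under $X_{j_k}=1$ versus $X_{j_k}=q$) lies at distance at least $c$ from $v$.

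This needs care, because $v=\overline{\tm}$ is itself random and depends on the $X_{j_k}$. We handle it by discretising $v$ over a grid of mesh $c/2$ in $[-\gamma,\gamma]$. For each grid point, Hoeffding's inequality over the independent $X_{j_k}$ gives that at least a $p_0/2$ fraction of the $k$'s have $|\tm_{i_k}-v|\ge c/2$, except on an event of probability $e^{-\Omega(N)}$. A union bound over the $O(1)$ grid points then gives $T_N\ge \frac{p_0c^2}{16}\cdot\frac{\#\{k\}}{N}\ge\delta$ with probability $1-e^{-\Omega(N)}$. Integrating out the conditioning completes the proof.

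\textbf{Expected main obstacle.} The delicate part is Step 3: obtaining a linear number of separated pairs whose flips act on disjoint $\tm_i$'s, while handling the random centring $\overline{\tm}$. The grid-and-union-bound argument above is the device I would try first.
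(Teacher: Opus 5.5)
Your reduction to $T_N(\bm X)^{-1}=O_\p(1)$ via Theorem~\ref{thm:theorem1}(c) is correct. So are the variance identity of Step~1 and the count $|\{(i,j):a_{ij}\ge c\}|\ge c'N$ with at most $\gamma/c$ such entries per row. Moreover, when every row of $\bm A_N$ has a uniformly bounded number of \emph{nonzero} entries (e.g.\ the scaled adjacency matrix of a bounded-degree graph), Steps~2--3 go through and even give an $e^{-\Omega(N)}$ bound on $\p(T_N(\bm X)\le\delta)$.

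However, Theorem~\ref{bdgr} is stated for arbitrary non-negative symmetric $\bm A_N$ satisfying \eqref{as1}, \eqref{as2}, \eqref{bddegr}. These conditions do not bound the number of nonzero entries per row: \eqref{as1} only limits how many entries exceed $c$, and arbitrarily many small entries may be positive. For example, take $\bm A_N=\frac{\kappa}{N}(\bm J-\bm I)+\bm P_N$, with $\bm P_N$ the adjacency matrix of a perfect matching. It satisfies all three conditions, yet $a_{ij}>0$ for every $i\ne j$. In that case:
\begin{itemize}
\item no two vertices are at graph distance $\ge 2$;
\item no two of the $X_{j_k}$ are conditionally independent given the remaining spins;
\item every $\tm_i$ depends on every $X_{j_k}$.
\end{itemize}
So the greedy separation of Step~2 fails, as do the conditional-independence and disjoint-influence claims of Step~3, and with them the Hoeffding bound. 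The conditional law of $(X_{j_k})_k$ given the other spins is then a Curie--Weiss-type Potts model on $\Omega(N)$ sites, not a product measure.

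The missing ingredient is a concentration argument that tolerates dense, weak couplings. The paper handles this globally, in three steps:
\begin{itemize}
\item On $E_N(\delta)\cap C_N(\delta)\cap D_N(\delta)$ (the last two are high-probability events by Lemma~\ref{bp12}), the Potts Hamiltonian is within $O(N\sqrt\delta)$ of a Curie--Weiss Hamiltonian at inverse temperature $\beta\bar R$.
\item The mean-field lower bound gives $Z^{\mathrm{CW}}_N(\beta\bar R,\bm B)/Z_N(\beta,\bm B)\le e^{o(N)}$.
\item Lemma~\ref{cwresult7} shows $\p^{\mathrm{CW}}_{\beta\bar R,\bm B}(\bm X\in E_N(A))$ is exponentially small. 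It introduces Gaussian auxiliary variables $\bm Z$ that make the Curie--Weiss spins conditionally i.i.d., then applies McDiarmid together with $\sum_{i,j}(a_{ij}-R_j/N)^2=\Omega(N)$; this is where \eqref{bddegr} enters.
\end{itemize}

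If you prefer to keep your local flip idea, you must replace exact independence plus Hoeffding by an exchangeable-pair bound in the spirit of Lemma~\ref{bp12}:
\begin{itemize}
\item Take a matching $\{(i_k,j_k)\}$ of size $\Omega(N)$ among the strong pairs.
\item Write $(\tm_{i_k}-v)^2=\sum_t\mathbbm{1}_{X_{j_k}=t}\,h_{k,t}$, where $h_{k,t}$ does not depend on $X_{j_k}$.
\item Note that $\sum_t\theta_{j_k,t}h_{k,t}\ge\alpha c^2$, because the two candidate values of $\tm_{i_k}$ differ by at least $2c$.
\item Show $\sum_k\sum_t(\mathbbm{1}_{X_{j_k}=t}-\theta_{j_k,t})h_{k,t}=O_\p(\sqrt N)$ with exponential tails, uniformly over a finite grid of $v$. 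This works because changing the single spin $X_u$ moves that sum by only $O(1+R_u)=O(1)$.
\end{itemize}
Counting the raw indicators $\mathbbm{1}\{|\tm_{i_k}-v|\ge c/2\}$ will not work under dense coupling, since one spin change can flip many of them at once.
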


The proof of Theorem \ref{bdgr} is given in Section \ref{sec:bdgrproof} of the appendix. As mentioned above, if the underlying interaction structure is the adjacency matrix of a deterministic graph scaled appropriately \eqref{adjacency_coupling_defn}, then Theorem \ref{bdgr} applies as long as the graph is of bounded degree and $\liminf_{N\rightarrow \infty} \bar{d}(G_N)>0$. This covers as special cases, the classical Ising models on lattices, that have finite-range interactions, and $d$-regular graphs with $d$ fixed.

\subsubsection{Irregular interactions}\label{sec:irregnm}
Throughout this subsection, we will assume that:
\begin{equation}\label{irregap}
    \liminf_{N\rightarrow \infty} \frac{1}{N}\sum_{i=1}^N (R_i - \bar{R})^2 > 0
\end{equation}
where $R_i := \sum_{j=1}^N a_{ij}$ and $\bar{R} := \frac{1}{N}\sum_{i=1}^N R_i$.
Note that if the coupling matrix is the scaled adjacency of a graph \eqref{adjacency_coupling_defn}, then Condition \eqref{irregap} says that the graph is asymptotically irregular. The following theorem shows that the joint MPL estimator is $\sqrt{N}$-consistent for the Potts model with interaction matrix $\bm A_N$ satisfying \eqref{irregap}.

\begin{theorem}\label{irrgr}
Suppose that $\bm B\ne \boldsymbol{0}$, and \textcolor{black}{$\bm X$ is an observation from the Potts model \eqref{eq:pmf} where the interaction matrix $\bm A_N$ satisfies} the conditions \eqref{as1}, \eqref{as2} and \eqref{irregap}. Then, 
$$\|(\hat{\beta}_N-\beta,\hat{\bm B}_N - \bm B)\|_2 = O_\p\left(\frac{1}{\sqrt{N}}\right)~.$$
\end{theorem}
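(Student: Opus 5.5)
By Theorem \ref{thm:theorem1}(c), it suffices to show that $T_N(\bm X)^{-1}=O_\p(1)$. Since $T_N$ is a sum of nonnegative empirical variances, it is enough to find one pair $r<s$ with
\[
\frac1N\sum_{i=1}^N\bigl(\tm_i^{r,s}(\bm X)-\overline{\tm}^{r,s}(\bm X)\bigr)^2
\]
bounded away from zero with high probability. The idea is to compare $m_{i,r}(\bm X)$ with $R_i$ times a color frequency. Irregularity \eqref{irregap} then forces variance in $\tm_i^{r,s}$, as long as the color frequencies of some pair $r,s$ are separated. Here the assumption $\bm B\ne\boldsymbol 0$ is what should rule out all frequencies being equal.

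I would argue by contradiction along a subsequence. Suppose $\p(T_N(\bm X)<\eps)\ge\delta$ for every $\eps>0$ along some subsequence. Then, on that event, each vector $(\tm_i^{r,s})_{i}$ is within $L^2(N^{-1}\text{counting})$ distance $\sqrt{\eps}$ of a constant $c_{r,s}$. I would use this near-constancy of the local field differences together with the conditional law \eqref{defcondprob881}. Given $\bm X_{-i}$, the variable $X_i$ has law $\theta_{i,\cdot}$, which depends on $i$ only through $(\beta\tm_i^{r,q}+B_r)_r$. So on the bad event, $\theta_{i,r}$ is close in $L^2$ to a fixed probability vector $p=(p_1,\dots,p_q)$, with $p_r\propto e^{\beta c_{r,q}+B_r}$.

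Next I would establish the standard concentration bound
\[
\frac1N\sum_i a_{ij}\bigl(\mathbbm{1}_{X_j=r}-\theta_{j,r}\bigr)\to0
\]
in $L^2$, uniformly in the weights. This should follow by the exchangeable pair / Chatterjee method used in \cite{estimation_15,ghosal2020joint}, adapted colorwise and using \eqref{as1}. Applying it with the weights of row $i$ gives $m_{i,r}(\bm X)\approx\sum_j a_{ij}\theta_{j,r}\approx p_rR_i$ in averaged $L^2$, using \eqref{as1} to control $\sum_j a_{ij}|\theta_{j,r}-p_r|$. Hence $\tm_i^{r,s}\approx(p_r-p_s)R_i$. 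Its empirical variance is then approximately $(p_r-p_s)^2\frac1N\sum_i(R_i-\bar R)^2$, which by \eqref{irregap} is bounded below unless $p_r=p_s$. On the bad event this forces $p_r\approx p_s$ for all pairs, i.e.\ $p\approx(1/q,\dots,1/q)$. Consequently $\tm_i^{r,s}\approx0$ for all pairs, so $c_{r,s}\approx0$. But then $p_r\propto e^{B_r}$, and $B_q=0$ with $\bm B\ne\boldsymbol0$ makes $p$ non-uniform, which is the contradiction.

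The main obstacle is making this self-consistency rigorous with quantitative control. Two issues arise. First, the approximation errors must be controlled uniformly in $\eps$ and $N$. Second, the map from $(c_{r,s})$ to $p$ involves $\beta$, so one must exclude a nonuniform $p$ that nonetheless has $p_r=p_s$ pairwise approximately; this is actually automatic, since pairwise equality means uniform. The delicate part is the first step: the relation $c_{r,q}\approx(p_r-p_q)\bar R$ combined with $p_r\propto e^{\beta c_{r,q}+B_r}$ gives a fixed-point equation. I would verify that the uniform vector solves it only when $\bm B=\boldsymbol0$, and that (by irregularity) the variance lower bound $(p_r-p_s)^2\liminf\frac1N\sum(R_i-\bar R)^2$ then applies with a constant depending only on $\beta,\bm B,\gamma$. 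I expect the concentration lemma for weighted color averages to be the main technical work, and I would model it directly on the Ising argument, treating each indicator $\mathbbm{1}_{X_j=r}$ as a spin.
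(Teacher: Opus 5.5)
Your architecture matches the paper's: reduce to $T_N(\bm X)^{-1}=O_\p(1)$, use near-constancy of $\theta_{i,\cdot}$ on $\{T_N<\eps\}$, invoke an approximate mean-field equation $m_{\cdot r}(\bm X)\approx\bm A_N\boldsymbol\theta_{\cdot r}(\bm X)$, then use irregularity and $\bm B\ne\boldsymbol 0$. The middle step, however, is false under the theorem's hypotheses. The averaged statement $\frac1N\sum_i\bigl(m_{i,r}(\bm X)-\sum_j a_{ij}\theta_{j,r}(\bm X)\bigr)^2\to 0$ does not follow from \eqref{as1}. For fixed $i$, the sum $\sum_j a_{ij}(\mathbbm 1_{X_j=r}-\theta_{j,r})$ fluctuates on the scale $(\sum_j a_{ij}^2)^{1/2}$, so you need $\frac1N\sum_{i,j}a_{ij}^2\to 0$, which is the mean-field condition \eqref{unbddegr}. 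Theorem \ref{irrgr} also covers non-mean-field irregular matrices. Take the scaled adjacency of a disjoint union of paths on three vertices: a leaf $i$ with center $c$ has $m_{i,r}-\sum_j a_{ij}\theta_{j,r}=a_{ic}(\mathbbm 1_{X_c=r}-\theta_{c,r})$, whose second moment is bounded below. So your approximation fails at two thirds of the vertices. The exchangeable-pair tool you cite cannot deliver this step either: Lemma \ref{bp12} controls linear statistics at scale $\sqrt N$, not this row-wise $\ell^2$ quantity.

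The repair is the paper's case split. Along subsequences with $\liminf\frac1N\sum_{i,j}a_{ij}^2>0$, the proof of Theorem \ref{bdgr} already gives $T_N(\bm X)^{-1}=O_\p(1)$, without using \eqref{irregap} or $\bm B\ne\boldsymbol 0$, so you may assume \eqref{unbddegr}. In that case your step holds and has an elementary proof. For $j\ne k$, replace $\theta_{k,r}$ by its value with $X_j$'s contribution to $m_{k,\cdot}$ removed; this is a function of $\bm X_{-j}$. This gives $|\e[(\mathbbm 1_{X_j=r}-\theta_{j,r})(\mathbbm 1_{X_k=r}-\theta_{k,r})]|\le\frac{\beta}{2}a_{jk}$, and hence
\[
\e\bigl\|m_{\cdot r}(\bm X)-\bm A_N\boldsymbol\theta_{\cdot r}(\bm X)\bigr\|_2^2\le\sum_{i,j}a_{ij}^2+\frac{\beta}{2}\,\mathrm{tr}(\bm A_N^3)\le\Bigl(1+\frac{\beta\gamma}{2}\Bigr)\sum_{i,j}a_{ij}^2=o(N).
\]
Since $\nabla_{\cdot r}\psi_N(\boldsymbol\theta)-\onb(\boldsymbol\theta)=\beta(\bm A_N\boldsymbol\theta_{\cdot r}-m_{\cdot r})$, this is exactly Lemma \ref{epnt376}(b). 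The paper derives that from the nonlinear-large-deviation bound in Lemma \ref{epnt376}(a). With the split added, your argument is a correct and more elementary route. Your deterministic contradiction, which compares $m_{i,r}$ with $p_rR_i$ directly rather than going through the infimum in \eqref{fstpp3}, works quantitatively. It even yields $\p(T_N(\bm X)<\eps)\to 0$ for a fixed $\eps>0$.
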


The proof of Theorem \ref{irrgr} is given in Section \ref{sec:irrgrproof} of the appendix. Common examples of interaction structures satisfying all the necessary assumptions of Theorem \ref{irrgr} are the scaled adjacencies of the complete bipartite graph $K_{m,n}$ and a disjoint union of the cliques 
{$K_m$ and $K_n$} where $N=m+n$ and $\frac{m}{N} \rightarrow \alpha \in (0,1) \setminus \{\frac{1}{2}\}$ as $N\rightarrow \infty$. In general, it follows from the theory of graphons (see \cite{lovasz2012} for a survey on graph limit theory and the literature of graphons) that if $G_N$ is a sequence of dense graphs converging to a graphon $W$  such that the function $x\mapsto \int_0^1 W(x,y) dy$ is not constant Lebesgue almost everywhere, then all the necessary assumptions of Theorem \ref{irrgr} are satisfied. This includes the above two examples as special cases, as well as dense stochastic block models on $N$ nodes with two communities $C_1$ and $C_2$ of sizes $m$ and $n$ respectively, where $m/N \rightarrow \alpha$, between-group connection probability $q$ and within-group connection probabilities $p_1$ (within community $C_1$) and $p_2$ (within community $C_2$), satisfying:
\begin{equation}\label{sbmgraphon8}
   \alpha(p_1-q)\ne (1-\alpha)(p_2-q).
\end{equation}
In this case, the limiting graphon is given by:
\[
W(x,y)=
\begin{cases}
p_1, & (x,y)\in [0,\alpha]\times[0,\alpha],\\[6pt]
p_2, & (x,y)\in (\alpha,1]\times(\alpha,1],\\[6pt]
q, & (x,y)\in [0,\alpha]\times(\alpha,1]\ \cup\ (\alpha,1]\times[0,\alpha].
\end{cases}
\]
If the block sizes are asymptotically equal (i.e. $\alpha = 1/2$), then condition \eqref{sbmgraphon8} reduces to unequal within-group connection probabilities (i.e. $p_1\ne p_2)$. On the other hand, if the within-group connection probabilities are the same (i.e. $p_1=p_2$), and unequal to the between-group connection probability $q$, then condition \eqref{sbmgraphon8} amounts to asymptotically unequal block sizes (i.e. $\alpha \ne 1/2$).
 See Section 1.2 in \cite{ghosal2020joint} for a detailed discussion on such examples.

Having shown that joint consistent estimation at rate $N^{-1/2}$ is possible for non mean-field and irregular interactions, we now go to the opposite extreme, where consistent joint estimation is impossible. This happens in the Curie-Weiss Potts model where the coupling matrix is the adjacency of the complete graph (scaled by $N$) (see \eqref{cwpotts1}) and more generally, in the Erd\H{o}s-R\'enyi Potts model, where the coupling matrix is given by:
\begin{equation}\label{erny}
    a_{ij} := \frac{g_{ij}}{Np} 
\end{equation}
 with $G:= ((g_{ij}))_{1\le i,j\le N}$ being the adjacency of an Erd\H{o}s-R\'enyi random graph $\mathcal{G}(N,p)$ with $p>0$ fixed. Note that in the latter model, the coupling matrix is random, so we will consider the problem of estimation under the joint law $\p_{\beta,\bm B}^{\mathrm{ER}}$ of $\bm X$ and $G$ on $[q]^N \times \{0,1\}^{\binom{N}{2}}$.
 Throughout the rest of the paper, we will use the notation $\mathcal{P}([q])$ to denote the set of all probability measures on $[q]$, i.e.
\begin{equation}\label{defpqfirst6}
    \mathcal{P}([q]) := \left\{\bm v \in [0,1]^q: \sum_{r=1}^q v_r =1\right\}.
\end{equation}

\begin{theorem}\label{jinest}
    For each $\bm m \in \mathcal{P}([q])$, let $\Theta_{\bm m}$ be the set of all $(\beta,\bm B) \in (0,\infty)\times \mathbb{R}^{q-1}$ such that the function 
    \begin{equation}\label{Hdefn6882}
        H_{\beta,\bm B}(\bm t) := \frac{\beta}{2} \sum_{r=1}^q t_r^2 + \sum_{r=1}^{q} B_r t_r - \sum_{r=1}^q t_r \log t_r
    \end{equation} 
    has the unique global maximizer $\bm m$ on the set $\mathcal{P}([q])$, and $$\bm u^\top \nabla^2 H_{\beta,\bm B}(\bm m) \bm u<0\text{  for all }\bm u \in T := \{\bm u \in \R^q\setminus{\{\bf 0\}}: \sum_{r=1}^q u_r = 0\}.$$ Then the product measure $\nu:= \bm m^N \times \mathcal{G}(N,p)$ is contiguous to the measure $\p_{\beta,\bm B}^{\mathrm{ER}}$ for every $(\beta,\bm B) \in \Theta_{\bm m}$. Consequently, whenever $|\Theta_{\bm m}| \ge 2$, under $\p_{\beta,\bm B}^{\mathrm{ER}}$ there does not exist any sequence of estimators (functions of $(\bm X, G)$) which is consistent for $(\beta,\bm B)$ in $\Theta_{\bm m}$. 
\end{theorem}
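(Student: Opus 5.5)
We prove contiguity by a second-moment argument on the likelihood ratio. Write $\bm X$ through its empirical color vector and condition on $G$. For fixed $G$, the Radon--Nikodym derivative of $\p_{\beta,\bm B}(\cdot\mid G)$ with respect to $\bm m^N$ is
\[
L_N(\bm x,G)=\frac{\exp\bigl(\frac{\beta}{2}\sum_{i,j}a_{ij}\mathbbm 1_{x_i=x_j}+\sum_i B_{x_i}\bigr)}{Z_N(\beta,\bm B;G)\prod_i m_{x_i}}.
\]
We aim to show that $L_N$ is tight and bounded away from zero in probability under $\nu$; equivalently, $\log L_N$ is tight under $\nu$. By Le Cam's first lemma this gives mutual contiguity. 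The non-existence of consistent estimators then follows at once: if $(\beta_1,\bm B_1)\neq(\beta_2,\bm B_2)$ both lie in $\Theta_{\bm m}$, then both $\p^{\mathrm{ER}}_{\beta_k,\bm B_k}$ are contiguous to the same $\nu$, hence to each other. An estimator converging in probability to $(\beta_1,\bm B_1)$ under the first law would therefore have to do so under the second as well, which is a contradiction.

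The key step is a decomposition of the exponent. Write $\bm p(\bm x)\in\mathcal P([q])$ for the empirical color proportions and $\bm W=\bm x\text{-indicator matrix}$. Then
\[
\sum_{i,j}a_{ij}\mathbbm 1_{x_i=x_j}=\sum_r \bm e_r(\bm x)^\top \bm A_N \bm e_r(\bm x),
\]
where $\bm e_r(\bm x)=(\mathbbm 1_{x_i=r})_i$. Since $\bm A_N=\frac{1}{N}\bm 1\bm 1^\top+\bm E_N$, where $\bm E_N=\frac{G-p(\bm 1\bm 1^\top-I)}{Np}-\frac1N I$ has operator norm $O_\p(N^{-1/2})$, the exponent splits into a Curie--Weiss part $\frac{N\beta}{2}\sum_r p_r^2+N\sum_r B_rp_r$ plus a fluctuation term $\frac\beta2\sum_r \bm e_r^\top \bm E_N\bm e_r$.

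For the fluctuation term, centre each $\bm e_r$ at $m_r\bm 1$. The piece $m_r^2\,\bm 1^\top\bm E_N\bm 1$ is $O_\p(1)$, because the total edge count fluctuates by $O(N)$, and it cancels in the same way in the normalising constant. The cross terms $m_r\bm 1^\top \bm E_N(\bm e_r-m_r\bm 1)$ involve the degree fluctuations $d_i-Np$ weighted by the centred indicators. Under $\nu$ these are sums of independent mean-zero terms of size $O(N^{-1/2})$ each, so the cross terms are $O_\p(1)$. The quadratic piece $(\bm e_r-m_r\bm 1)^\top\bm E_N(\bm e_r-m_r\bm 1)$ has bounded mean and variance under $\nu$, since $G$ and $\bm x$ are independent and $\bm E_N$ is a centred Wigner-type matrix scaled by $1/N$. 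This controls the numerator.

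For the denominator $Z_N$, we need
\[
Z_N\prod\bigl(\text{normaliser}\bigr)\asymp \e_{\nu}\bigl[\exp(\cdots)\bigr].
\]
After the cancellation above, $\frac{Z_N}{\prod}$ reduces to a sum over type classes, $\sum_{\bm p}\binom{N}{N\bm p}\exp\bigl(N[\frac\beta2\sum p_r^2+\sum B_rp_r]\bigr)\cdot e^{O_\p(1)}$. The Laplace method applied to $H_{\beta,\bm B}$ on $\mathcal P([q])$ is where the hypotheses enter. Uniqueness of the maximiser $\bm m$ gives exponential suppression away from $\bm m$. Strict negativity of the Hessian on $T$ makes the Gaussian integral over the $(q-1)$-dimensional simplex around $\bm m$ yield a constant of order $1$, matching the multinomial normaliser of $\bm m^N$. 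It also shows that $\sqrt N(\bm p-\bm m)$ is tight under both measures. Combining the pieces, $\log L_N=-\frac N2(\bm p-\bm m)^\top[-\nabla^2H](\bm p-\bm m)+O_\p(1)$, which is tight under $\nu$ because $\sqrt N(\bm p-\bm m)=O_\p(1)$ there.

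The main obstacle is making the fluctuation term uniformly $O(1)$ inside $Z_N$, that is, under the Potts measure rather than under $\nu$, so that it genuinely factors out. I would handle this by a quenched argument. Compute $\e_G Z_N$ and $\e_G Z_N^2$ and show the ratio is bounded, which is the standard second-moment method for dense Erdős--Rényi graphs. Because $p$ is fixed and the spin overlaps concentrate at $\bm m$ with $\sqrt N$ fluctuations (the nondegenerate Hessian is used again here), the exponent of the second moment is a bounded quadratic form, and this should give $Z_N/\e_G Z_N$ tight and bounded away from $0$.
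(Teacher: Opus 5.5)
Your route differs from the paper's. For the one-sided contiguity the theorem asserts ($\p^{\mathrm{ER}}_{\beta,\bm B}(A_N)\to0\Rightarrow\nu(A_N)\to0$), it can be made to work. However, the step you call the main obstacle is unnecessary, and the tool you propose for it does not give what you claim.

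\emph{How the paper proceeds.} The paper never writes down $d\p^{\mathrm{ER}}_{\beta,\bm B}/d\nu$; it chains two one-sided contiguities.
First, $\bm m^N\triangleleft\p^{\mathrm{CW}}_{\beta,\bm B}$. This uses the Gaussian auxiliary vector $\bm Z$: given $\bm Z$, the spins are i.i.d.\ with law $f(\bm Z)=\bm m+\bm W/\sqrt N$. Tightness of $\sqrt N(\bm Z-\bm m)$ comes from a large-deviation bound, an exchangeable-pair estimate on $\bar{\bm X}-f(\bar{\bm X})$, and the inverse function theorem; this is where the Hessian condition enters.
Second, $\p^{\mathrm{CW}}_{\beta,\bm B}\times\mathcal G(N,p)\triangleleft\p^{\mathrm{ER}}_{\beta,\bm B}$, via a Kullback--Leibler bound in which the random part of the Hamiltonian is never estimated. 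Since $\bm x$ and $G$ are independent, the $G$-average of the Erd\H{o}s--R\'enyi Hamiltonian is the Curie--Weiss one. Jensen and Hoeffding's lemma then give $D\le\log\bigl(\e_G Z_N(G)/Z_N^{\mathrm{CW}}(\beta,\bm B)\bigr)\le\beta^2/(16p^2)$.
You replace the first step by Stirling/Laplace asymptotics for $Z_N^{\mathrm{CW}}$, which is more elementary and more quantitative. You replace the second by explicit moment estimates of $\frac\beta2\sum_r\bm e_r^\top\bm E_N\bm e_r$ under $\nu$; those estimates are correct, but more than is needed.

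\emph{The exact identity and what it requires.} The fixed-point equation $\log m_r=\beta m_r+B_r-\log\sum_s e^{\beta m_s+B_s}$ gives
\[
\log L_N=\frac{N\beta}{2}\|\bm p-\bm m\|_2^2+\bigl(NH_{\beta,\bm B}(\bm m)-\log Z_N(G)\bigr)+\frac{\beta}{2}\sum_{r=1}^q\bm e_r^\top\bm E_N\bm e_r .
\]
So your final expansion with $-\frac N2(\bm p-\bm m)^\top[-\nabla^2H](\bm p-\bm m)$ is wrong. The Hessian condition enters only through $\log Z_N^{\mathrm{CW}}(\beta,\bm B)-NH_{\beta,\bm B}(\bm m)=O(1)$.
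For $\nu\triangleleft\p^{\mathrm{ER}}_{\beta,\bm B}$ you only need $\log L_N$ bounded below in probability under $\nu$, hence only an \emph{upper} bound on $Z_N(G)$. That follows from three facts: Markov's inequality, $\e_G Z_N(G)\le e^{\beta^2/(16p^2)}Z_N^{\mathrm{CW}}(\beta,\bm B)$, and your Laplace bound $Z_N^{\mathrm{CW}}(\beta,\bm B)\le Ce^{NH_{\beta,\bm B}(\bm m)}$. No uniform control of the fluctuation inside $Z_N$ is required. Taking $\e_\nu$ of the identity and using Jensen even gives $D(\nu\,\|\,\p^{\mathrm{ER}}_{\beta,\bm B})=O(1)$ directly, which merges the paper's two steps.

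\emph{The second-moment step fails as stated.} A bounded ratio $\e_G Z_N^2/(\e_G Z_N)^2$ gives only $\p(Z_N\ge c\,\e_G Z_N)\ge c'>0$ by Paley--Zygmund, not probability tending to one. Here the ratio does not tend to $1$, because the edge count shifts $\log Z_N$ by a random $O(1)$ amount. If you want a lower bound, use the Gibbs variational bound with trial measure $\bm m^N$, which gives $\log Z_N(G)\ge NH_{\beta,\bm B}(\bm m)+\frac\beta2\|\bm m\|_2^2\,\boldsymbol{1}^\top\bm E_N\boldsymbol{1}$.

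\emph{Mutual contiguity is overclaimed.} Even with both bounds, tightness of $\log L_N$ under $\nu$ yields through Le Cam's first lemma only $\nu\triangleleft\p^{\mathrm{ER}}_{\beta,\bm B}$; the reverse direction needs uniform integrability of $L_N$ under $\nu$. So do not argue that the two Potts laws are contiguous to each other. Argue directly: consistency at $\bm\tau_1\ne\bm\tau_2$ in $\Theta_{\bm m}$ would force $\nu(\|\hat{\bm\tau}_N-\bm\tau_k\|>\varepsilon)\to0$ for $k=1,2$, which is impossible once $\varepsilon<\|\bm\tau_1-\bm\tau_2\|/2$.
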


The proof of Theorem \ref{jinest} is based on a contiguity argument that is presented in Section \ref{proof:jinest} of the appendix.

\begin{remark} 
The ambient Hessian of the function $H_{\beta,\bm B}$ is given by:
$$\nabla^2 H_{\beta,\bm B}(\bm t) := \mathrm{diag}\left((\beta-t_r^{-1})_{1\le r \le q}\right).$$ This implies that for $\beta \le 1$, $H_{\beta,\bm B}$ is negative definite for all $\bm t$ in $(0,1)^q$, and hence, the function $H_{\beta, \bm B}$ is strictly concave in this case. Therefore, for $\beta \le 1$, any stationary point of $H_{\beta,\bm B}$ must be its unique maximizer. By a Lagrangian argument (see the proof of Lemma \ref{c222}), an interior stationary point $\bm m \in \mathcal{P}([q])$ of $H_{\beta, \bm B}$ is characterized by the system of equations:
  $$\beta(m_r-m_q) + B_r = \log \frac{m_r}{m_q}\qquad\text{for}~r\in [q-1].$$
   Therefore, for any $\bm m \in \mathcal{P}([q])$, we have:
   \begin{equation}\label{nonsingempt68}
       \left\{\left(\beta,\log \frac{m_1}{m_q} + \beta(m_q-m_1),\ldots, \log \frac{m_{q-1}}{m_q} + \beta(m_q-m_{q-1})\right): 0<\beta \le 1\right\}\subseteq \Theta_{\bm m}
   \end{equation}
   and note that the LHS of \eqref{nonsingempt68} is a non-empty affine (straight) line segment in $\R^q$, containing a continuum of points.
\end{remark}

Finally, we establish consistency results for the partial MPL estimators of $\beta$ and $\bm{B}$, treating each parameter as known while estimating the other. This setting is comparatively simpler and requires weaker assumptions than those needed for joint consistent estimation. For every fixed $\bm B$, the partial MPL estimator of $\beta$ is defined as the unique maximizer of the function $\beta \to \ell_N(\beta,\bm B)$, and for every fixed $\beta$, the partial MPL estimator of $\bm B$ is defined as the unique maximizer of the function $\bm B \to \ell_N(\beta,\bm B)$, if they exist. The following theorem gives consistency rates of the partial MPL estimator of $\beta$ in terms of the quantity  
\begin{equation}\label{eq:Undefn}
U_N(\bm x)=\frac{1}{N} \sum_{1 \leqslant  r<s \leqslant  q} \sum_{i=1}^N (m_{i,r}(\bm x) - m_{i,s}(\bm x))^2,
\end{equation}  
and also that of $\bm B$.

\begin{theorem}\label{partialestm}
Suppose $\bm X$ is a sample from the Potts model \eqref{eq:pmf}, where the coupling matrix $\bm A_N$ satisfies conditions \eqref{as1} and \eqref{as2}, and $(\beta,\bm B) \in \Theta := (0,\infty)\times \mathbb{R}^{q-1}$. 
Then, the following are true:
\vspace{0.1in}
\begin{itemize}
	\item [(a)] The partial MPL estimator $\hat{\bm B}_N$ exists with probability $1-o(1)$ for all $(\beta,\bm B) \in (0,\infty)\times \mathbb{R}^{q-1}$, and the partial MPL estimator $\hat{\beta}_N$ exists with probability $1-o(1)$ whenever 
    \begin{equation}\label{unrateexact648}
    U_N(\bm X)^{-1} = o_\p(\sqrt{N}).
    \end{equation}
	\vspace{0.05in}
	
    \item [(b)] The partial MPL estimator $\hat{\bm B}_N$ satisfies:
    $$\|\hat{\bm B}_N - \bm B\|_2 = O_\p\left(\frac{1}{\sqrt{N}}\right).$$
\vspace{0.05in}

    \item [(c)] If $U_N(\bm X)^{-1} = o_\p(\sqrt{N})$, then the partial MPL estimator $\hat{\beta}_N$ satisfies:
    $$|\hat{\beta}_N - \beta| = O_\p\left(\frac{1}{\sqrt{N}U_N(\bm X)}\right).$$
    \vspace{0.05in}

    \item [(d)]
    Define  \[   
\beta_c := 
     \begin{cases}
       q &\quad\text{if}~q\le 2\\
       \frac{2(q-1)}{q-2} \log (q-1) &\quad\text{otherwise}. \\ 
     \end{cases}
\]
    If $\bm B \ne \boldsymbol{0}$, or if $\bm B = \boldsymbol{0}$ and $\beta \liminf_{N\rightarrow \infty} \frac{{\bf 1}'\bm A_N {\bf 1}}{N}> \beta_c$, then  $U_N(\bm X)^{-1} = O_\p(1)$. In these cases, $\hat{\beta}_N$ is $\sqrt{N}$-consistent for $\beta$.
\vspace{0.15in}

    \item [(e)]  Finally, for the model $\p_{\beta,\bm B}^{\mathrm{ER}}$ described above (see \eqref{erny}), no consistent sequence of estimators exists for $\beta <\beta_c$ (here $\lim_{N\rightarrow \infty} \frac{{\bf 1}'\bm A_N {\bf 1}}{N} =1$ almost surely) when $\bm B = \boldsymbol{0}$.
\vspace{0.05in}

\end{itemize}
\end{theorem}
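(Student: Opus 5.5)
Parts (a)–(c) follow the one-dimensional version of the argument behind Theorem \ref{thm:theorem1}; parts (d) and (e) need the Potts mean-field analysis and the contiguity argument of Theorem \ref{jinest}.

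\emph{Parts (a)–(c).} For fixed $\beta$, the map $\bm B\mapsto \ell_N(\beta,\bm B)$ is concave. Its Hessian is $-\sum_i(\mathrm{diag}(\bm\theta_i)-\bm\theta_i\bm\theta_i^\top)$, restricted to the first $q-1$ coordinates. Since $m_{i,r}\in[0,\gamma]$ by \eqref{as1}, every $\theta_{i,r}$ lies in an interval $[c,1-c]$ uniformly over $\bm B$ in compacts. Hence this Hessian is bounded above by $-cN\,\bm I$ on compacts.

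The partial MPL estimator $\hat{\bm B}_N$ exists as soon as $\ell_N(\beta,\cdot)$ is coercive. This holds when every color appears in $\bm X$, i.e.\ $\bm X\in\Lambda_N$. Under the model, each $X_i$ has conditional probability of each color at least $c>0$. So $\p(\bm X\notin\Lambda_N)\le q\,\e\prod_i(1-c)=q(1-c)^N\to 0$, by iterated conditioning.

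For $\beta$, write $\partial^2_\beta\ell_N=-\sum_i \mathrm{Var}_{\bm\theta_i}(m_{i,\cdot})$. The variance of a discrete distribution with masses bounded below by $c$ satisfies
\[
\mathrm{Var}_{\bm\theta_i}(m_{i,\cdot})\ge c'\sum_{r<s}(m_{i,r}-m_{i,s})^2 .
\]
Summing over $i$ gives $-\partial_\beta^2\ell_N\ge c'NU_N$, locally uniformly in $\beta$.

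The score at the truth is a sum of conditionally centered terms. The standard exchangeable-pair / Chatterjee argument (as in \cite{estimation_15}) gives
\[
\e(\partial_\beta\ell_N)^2=O(N), \qquad \e\|\nabla_{\bm B}\ell_N\|^2=O(N).
\]
A one-dimensional Taylor expansion then yields $|\hat\beta_N-\beta|=O_\p(1/(\sqrt N U_N))$. Here the condition $U_N^{-1}=o_\p(\sqrt N)$ guarantees that the quadratic term dominates the score on a shrinking neighborhood; this also gives existence of $\hat\beta_N$. The same expansion with curvature $cN$ gives (b).

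\emph{Part (d).} We need to show $U_N$ is bounded below in probability. I argue by contradiction. If $U_N$ is small, then for most $i$ the local fields $m_{i,r}$ are nearly equal across colors. Then $\sum_r m_{i,r}=R_i$ forces $m_{i,r}\approx R_i/q$, so $\overline m_r\approx \bar R/q$ for all $r$.

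If $\bm B\neq\bm 0$, two colors $r\neq s$ have $B_r\neq B_s$. Conditional on the rest, $\p(X_i=r)/\p(X_i=s)\approx e^{B_r-B_s}\neq 1$. By concentration of $\sum_i(\mathbbm 1_{X_i=r}-\theta_{i,r})$, the color proportions $N^{-1}\sum_i\mathbbm 1_{X_i=r}$ then differ by a constant.

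I then link color proportions to the averaged fields through
\[
\sum_i m_{i,r}=\sum_j R_j\mathbbm 1_{X_j=r}.
\]
This is the delicate point when $\bm A_N$ is irregular. I would avoid it by working with the quadratic form $\sum_r\sum_i m_{i,r}\mathbbm 1_{X_i=r}$ instead, and by using non-negativity of the $a_{ij}$ together with \eqref{as2}.

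If $\bm B=\bm 0$ and $\beta\bar R>\beta_c$, the argument is a free-energy comparison instead. The event $\{U_N\le\eps\}$ means $\bm X$ is close to the uniform ``disordered'' configuration at the level of local fields. Via the nonlinear large deviation bound (Lemma \ref{epnt376}), its probability is at most
\[
\exp\Big(N\big[\sup_{\text{near-uniform}}H-\sup H\big]+o(N)\Big).
\]
Above $\beta_c$ the uniform point is not the global maximizer of the Potts mean-field functional, so this bound is exponentially small.

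\emph{Part (e).} This follows Theorem \ref{jinest} with $\bm m=(1/q,\dots,1/q)$. For $\beta<\beta_c$ and $\bm B=\bm 0$, the uniform vector is the unique global maximizer of $H_{\beta,\bm 0}$ on $\mathcal P([q])$. The restricted Hessian there is $(\beta-q)\bm I$ on $T$, which is negative definite for $\beta<q$; note $\beta_c\le q$. So $(\beta,\bm 0)\in\Theta_{\bm m}$ for a continuum of $\beta$, and no consistent estimator exists.

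The main obstacle is part (d) in the case $\bm B=\bm 0$: the variational comparison for Potts, and quantifying that near-equality of local fields forces near-uniformity, which has lower free energy.
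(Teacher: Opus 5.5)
\textbf{Parts (a)--(c) and (e).} These are fine, and in (a) you take a different but valid route from the paper.
\begin{itemize}
\item For existence of $\hat\beta_N$ you bracket: the score at the truth is $O_\p(\sqrt N)$, the curvature is $\gtrsim N U_N(\bm X)$ on a fixed neighbourhood, and $\ell_N$ is concave. This gives existence and the rate in one stroke. The paper instead works with the events $A_{2,N},A_{3,N}$ and Lemma~\ref{c7y}.
\item You bound $\p(\bm X\notin\Lambda_N)\le q(1-c)^N$ by iterated conditioning. The paper instead uses the concentration bound of Lemma~\ref{bp12}.
\end{itemize}
The genuine gap is in part (d).

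\textbf{The case $\bm B=\bm 0$ in (d).} You invoke Lemma~\ref{epnt376}, but that lemma is proved only under the mean-field condition \eqref{unbddegr}. Its proof relies on an $\varepsilon\sqrt N$-net of size $e^{o(N)}$ and on $\e[(h_N(\bm X)-h_N(\boldsymbol\theta(\bm X)))^2]=o(N^2)$. Part (d) assumes only \eqref{as1}--\eqref{as2}, so bounded-degree couplings, for instance, are not covered. Moreover, the lemma gives an $o_\p(N)$ statement about $\psi_N(\boldsymbol\theta(\bm X))$, not the exponential bound in terms of $H$ that you display.

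\textbf{The case $\bm B\ne\bm 0$ in (d).} You leave the irregular case open, and the quadratic-form workaround does not yield a contradiction. On $\{U_N\le\delta\}$ both $\sum_{i,r}x_{i,r}m_{i,r}(\bm x)$ and $\sum_{i,r}\theta_{i,r}(\bm x)m_{i,r}(\bm x)$ equal $N\bar R/q+O(N\sqrt\delta)$, so comparing them by concentration is consistent.

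\textbf{The missing idea.} It is an elementary, deterministic energy bound on $G_N(\delta)=\{U_N\le\delta\}$.
\begin{itemize}
\item Cauchy--Schwarz together with $\sum_r m_{i,r}=R_i$ gives $\sum_{i,r}x_{i,r}m_{i,r}(\bm x)\le N(\bar R/q+\sqrt\delta)$ on $G_N(\delta)$.
\item Summing $e^{\sum_{i,r}B_rx_{i,r}}$ over all of $[q]^N$ then bounds the unnormalized mass of $G_N(\delta)$ by $\exp\{N[\tfrac{\beta}{2}(\bar R/q+\sqrt\delta)+\log\sum_re^{B_r}]\}$.
\item The product-measure (mean-field) lower bound gives $N^{-1}\log Z_N\ge\sup_{\bm t\in\mathcal P([q])}H_{\beta\bar R,\bm B}(\bm t)$ for every $\bm A_N$.
\item If $\bm B\ne\bm 0$, take $t_r\propto e^{B_r}$. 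The right side becomes $\tfrac{\beta\bar R}{2}\sum_rt_r^2+\log\sum_re^{B_r}$, and $\sum_rt_r^2>1/q$.
\item If $\bm B=\bm 0$, the numerator exponent is $H_{\beta\bar R,\bm 0}(\mathbf 1/q)$. This is strictly below $\sup H_{\beta\bar R,\bm 0}$ once $\beta\bar R>\beta_c$.
\end{itemize}
Either way $\p(U_N(\bm X)\le\delta)$ is exponentially small for small $\delta$, with no mean-field hypothesis and no regularity issue. Your concentration route for $\bm B\ne\bm 0$ can also be closed directly with the weighted bound $\sum_iR_i(x_{i,r}-\theta_{i,r})=O_\p(\sqrt N)$ (Lemma~\ref{bp12} with $b_{itrs}=R_i\mathbbm{1}_{t=r}$). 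It forces $\overline m_r\approx\bar R\,e^{B_r}/\sum_se^{B_s}$, which contradicts $\overline m_r\approx\bar R/q$ because $\liminf\bar R>0$.
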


The proof of Theorem \ref{partialestm} is given in Section \ref{proof:partial8} of the appendix. 

\subsection{Future directions} \textcolor{black}{As a possible future direction, a first question is to relax the assumption of non-negativity of the coupling matrix $\A_N$, and extend our results to the case of spin glass Potts models (similar to what was done for the special case of $q=2$ in \cite{ChenSenWu2024o}). Another interesting question is to go beyond concentration results, and develop central limit theorems for the magnetization vector, similar to  what was done in \cite{deb2020fluctuations} for $q=2$. This will ultimately lead to the  construction of asymptotically valid confidence intervals, a very useful inferential task. Possibly a more challenging direction is to go beyond quadratic interaction models, and study general Gibbs measures with higher order tensors, such as cubics, quartics, and so on. The exponential random graph models (ERGMs) fall under this class of higher order tensor models, and have proved notoriously hard for inference purposes. In particular, the phenomenon of \enquote{degeneracy} for ERGMs has a body of growing literature, both in empirical and rigorous work (see \cite{SnijdersEtAl2006, handcock2003assessing, egrm_2, Mukherjee2020} and references therein).} Other related and more general models in which one may seek to establish analogous results on the joint consistency of parameter estimators include the XY model \citep{Kenna2005TheXM}, the Ashkin--Teller model \citep{AshkinTeller1943, AounDoberGlazman2024}, and the $O(N)$ model \citep{KirkpatrickNawaz2016}.

\subsection{Outline of the paper}
\textcolor{black}{The rest of the paper is organized as follows. The proof of our main result (Theorem \ref{thm:theorem1}) is given in Section \ref{sec:int}.  
In Section \ref{sec:numstudy} we illustrate our theoretical results with a simulation study. In Appendices \ref{sec:bdgrproof} and \ref{sec:irrgrproof}, we prove Theorems \ref{bdgr} and \ref{irrgr}, respectively. Appendices \ref{proof:jinest} and \ref{proof:partial8} are dedicated to the proofs of the remaining main results of the paper, namely Theorems \ref{jinest} and \ref{partialestm}, respectively. In Appendix \ref{mestsec4}, we prove a general result on convergence of $Z$-estimators, whereas in Appendix \ref{techres}, we develop necessary tools for bounding the derivatives of the log pseudolikelihood, both of which are crucial in establishing consistency and rates of convergence for our MPL estimators (Theorems \ref{thm:theorem1} and \ref{partialestm}). In Appendix \ref{prcw7} we prove Lemma \ref{cwresult7}, which is a crucial step towards proving Theorem \ref{bdgr}. In Appendix \ref{sec:necth1p3}, we prove some results necessary for verifying Theorem \ref{jinest}. Finally, Appendix \ref{sec:othertechnle} contains additional technical lemmas necessary for proving some of the main results of the paper.}

\section{Proof of Theorem \ref{thm:theorem1}}\label{sec:int}

This section is dedicated to proving the main result of this paper (Theorem \ref{thm:theorem1}). Further technical lemmas necessary for proving these results are given in the appendix.

\vspace{0.1in}

\noindent (a) Suppose that $\bm X \in \Omega_N \bigcap \Lambda_N$. Then, there exist $1\le a\ne b \le q$ and $1\le i,j,k,l \le N$ all distinct, such that $\{X_i,X_j\} = \{a,b\}$, $\{X_k,X_\ell\} = \{a,b\}$ and $\{\tm_i^{a,b}(\bm X), \tm_j^{a,b}(\bm X)\} < \{\tm_k^{a,b}(\bm X),\tm_\ell^{a,b}(\bm X)\}$ (recall that $\tm_u^{r,s}(\bm X) := m_{u,r}(\bm X) - m_{u,s}(\bm X)$). Since the function $\ell_N$ is concave (see Lemma \ref{hesdet}), in order to show the existence of the MPL estimator, it suffices to show that:
$$\lim_{\|(\beta,\bm B)\|_\infty \rightarrow \infty} \ell_N(\beta,\bm B) = -\infty.$$

 Without loss of generality, assume that $X_i=X_k=a$ and $X_j=X_\ell= b$. Since 
 \begin{eqnarray*}
     &&h_w(\beta,\bm B)\\ &:=& \beta\sum_{r=1}^q m_{w,r}(\bm X)\mathbbm{1}_{X_w=r} +  \sum_{r=1}^{q} B_{r} \mathbbm{1}_{X_{w} =r} - \log\left(\sum_{r=1}^{q} \exp\left\{\beta m_{w,r}(\bm X) + B_{r} \right\}\right) \le 0
 \end{eqnarray*}
 for all $1\le w\le N$, it suffices to show that at least one of $h_i(\beta,\bm B), h_j(\beta,\bm B), h_k(\beta,\bm B)$ and $h_\ell(\beta,\bm B)$ goes to $-\infty$ as $\|(\beta,\bm B)\|_\infty \rightarrow \infty$. Now, note that:
 \begin{equation}\label{logitlast6}
     \exp(h_w(\beta,\bm B)) = \frac{1}{1+\sum_{r\ne X_w} \exp\left(\beta (m_{w,r}(\bm X) - m_{w,X_w}(\bm X))+ (B_r - B_{X_w})\right)}~,
 \end{equation}
Setting $w = i, j, k, \ell$ in \eqref{logitlast6}, it suffices to show that at least one of the following four quantities:
\begin{enumerate}
    \item $\beta (m_{i,r}(\bm X) - m_{i,a}(\bm X))+ (B_r - B_a)$,
    \item $\beta (m_{k,r}(\bm X) - m_{k,a}(\bm X))+ (B_r - B_{a})$,
    \item  $\beta (m_{j,r}(\bm X) - m_{j,b}(\bm X))+ (B_r - B_{b})$, and
    \item $\beta (m_{\ell,r}(\bm X) - m_{\ell,b}(\bm X))+ (B_r - B_{b})$
\end{enumerate}
goes to $+\infty$ as  $\|(\beta,\bm B)\|_\infty \rightarrow \infty$, for at least one $r\in [q]$. So, let us assume that none of them goes to $+\infty$, i.e. there exists a constant $K \in (0,\infty)$, such that all of (1), (2), (3) and (4) are bounded above by $K$ along the sequence $(\beta,\bm B)$ whose norm goes to $+\infty$, for all $r\in [q]$. In this case, putting $r=b$ in (1) and $r= a$ in (4), we get:
\begin{eqnarray*}
   && -\beta \tm_i^{a,b} (\bm X) - (B_a-B_b) \le K\quad\text{and}\quad \beta \tm_\ell^{a,b} (\bm X) + (B_a-B_b) \le K\\&\implies& \quad \beta \le \frac{2K}{\tm_\ell^{a,b} (\bm X)- \tm_i^{a,b} (\bm X)}~. 
\end{eqnarray*}

Similarly, putting $r=b$ in (2) and $r=a$ in (3), we get:
\begin{eqnarray*}
   && -\beta \tm_k^{a,b} (\bm X)- (B_a-B_b) \le K\quad\text{and}\quad \beta \tm_j^{a,b} (\bm X) + (B_a-B_b) \le K\\ &\implies& \quad \beta \ge -\frac{2K}{\tm_k^{a,b}(\bm X) - \tm_j^{a,b} (\bm X)}~.
\end{eqnarray*}

Thus, 
\begin{equation*}\label{betaboundK7}
    |\beta| \le K_0 := \max\left\{\frac{2K}{\tm_\ell^{a,b}(\bm X) - \tm_i^{a,b}(\bm X)}~,~\frac{2K}{\tm_k^{a,b}(\bm X) - \tm_j^{a,b}(\bm X)}\right\}.
\end{equation*}
Now, choose any two distinct colors $s, t\in [q]$. \textcolor{black}{Since $\bm X \in \Lambda_N$, we can choose $1\le u\ne v\le N$ such that $X_u = s$ and $X_v = t$}. If either $h_u(\beta,\bm B)$ or $h_v(\beta,\bm B) \rightarrow -\infty$, then once again we are done. So, assume otherwise, i.e. both are bounded below by some constant, which implies that there exists $\varepsilon > 0$ such that: $$\min \left\{\exp(h_u(\beta,\bm B)), \exp(h_v(\beta,\bm B))\right\} > \varepsilon,$$
which, in view of \eqref{logitlast6}, implies that:
\begin{equation}\label{esn1}
    \beta \tm_u^{r,s}(\bm X)  + B_r-B_s < -\log \varepsilon\quad\text{for all}~r\ne s
\end{equation}
and
\begin{equation}\label{esn2}
  \beta \tm_v^{r,t}(\bm X)  + B_r-B_t< -\log \varepsilon\quad\text{for all}~r\ne t. 
\end{equation}
Now, putting $r=t$ in \eqref{esn1} and $r=s$ in \eqref{esn2}, we get:
$$B_t-B_s < -\log \varepsilon -\beta \tm_u^{t,s}(\bm X)\quad\text{and}\quad B_s-B_t < -\log \varepsilon -\beta \tm_v^{s,t}(\bm X).$$
By Assumption \ref{as1}, there exists $\gamma \in (0,\infty)$ such that $\sup_{N\ge 1, i \in [N]}\sum_{j=1}^N a_{ij} <\gamma$, which implies that $|\tm_u^{t,s}(\bm X)| \le 2\gamma$ and $|\tm_v^{t,s}(\bm X)|\le 2\gamma$.
This now implies that
$|B_t - B_s| \le -\log \varepsilon + 2K_0\gamma$. Setting $s=q$, we thus have $|B_t| 
\le -\log \varepsilon + 2K_0\gamma$, i.e. $B_t$ is bounded. Hence, $(\beta,\bm B)$ is a bounded sequence, a contradiction. This proves (a).

\vspace{0.1in}

\noindent (b)~ 
It follows from Lemma \ref{bp12} (taking $\lambda = 0$) that $\|\nabla \ell_N(\beta,\bm B)\|_2 = O_\p(\sqrt{N})$. Part (b) now follows from Proposition \ref{genZthlem} (on taking $w_N(\beta, \bm B) = \nabla \ell_N(\beta, \bm B)$, $a_N =\sqrt{N}$ and $h_N(\bm X) = N T_N(\bm X)$) and Lemma \ref{hesdet}. \qed

\vspace{0.1in}

\noindent (c)~It suffices to check \eqref{joincg5}, as the other conclusions follow from parts (a) and (b). To this effect, define 
$$E_N(\delta) := \left\{\bm x \in [q]^N: T_N(\bm x) < \delta\right\}.$$ 
It follows from the tightness of $T_N(\bm X)^{-1}$, that
\begin{equation}\label{tightdel}
	\lim_{\delta \rightarrow 0} \sup_{N\ge 1} \mathbb{P}(\bm X\in E_N(\delta)) = 0.
\end{equation} 
Suppose that $\bm x \in E_N(\delta)^c$ for some fixed $\delta > 0$. Fixing $\bm x$, for notational convenience, we will abbreviate $\tm_{u}^{r,s}(\bm x)$ by $\tm_u^{r,s}$. Then, we have:
$$T_N(\bm x) = \frac{1}{2N^2}\sum_{r<s} \sum_{i,j} \left(\tm_i^{r,s} - \tm_j^{r,s}\right)^2 \ge \delta,$$
Hence, there exist colors $a<b$, such that
\begin{equation}\label{tocontr66}
  \frac{1}{N^2} \sum_{i,j} \left(\tm_i^{a,b} - \tm_j^{a,b}\right)^2 \ge \frac{4\delta}{q(q-1)}. 
\end{equation}
It follows from \eqref{as1} that for any $r,s\in [q]$ and $i\in [N]$, $\tm_{i}^{r,s} \in [-\gamma, \gamma]$, where $\gamma$ is defined in \eqref{as1}. Fixing a positive integer $R>\frac{9q\gamma}{4\sqrt{\delta}}$, and setting  $t_p:=\frac{p\gamma}{R}$ for $p\in \mathbb{Z}$, note that
 $$[-\gamma, \gamma]=\bigcup_{-R\le p\le R-1}[t_p,t_{p+1}].$$
 Let us define
$$S_p := \left\{i\in [N]: \tm_i^{a,b} \ge \frac{p\gamma}{R}\right\}$$ and $$p_0 = p_0(\varepsilon) := \max \left\{p\in \mathbb{Z}\cap[-R,R+1]: |S_p|  \ge \varepsilon N\right\}-1.$$
We now claim that whenever $ \varepsilon\in \Big(0, \delta/(9q^2\gamma^2)\Big)$, $|S_{p_0+1}|\ge \varepsilon N$ and $|S_{p_0-1}^c|\ge \varepsilon N$.
\vspace{0.1in}

\noindent {\bf Proof of Claim:}
Note that $|S_{-R}| = N$, $|S_{R+1}| = 0$ and $|S_p|$ is decreasing in $p$, so such a $p_0$ exists. Also, it follows directly from the definition of $p_0$, that:
$$|S_{p_0+1}| \ge \varepsilon N\quad\text{and}\quad |S_{p_0+2}| <\varepsilon N.$$
We claim that $|S_{p_0-1}^c| \ge \varepsilon N$. If this is not true, then there must exist at least $(1-2\varepsilon)N$ many $i$'s for which $\tm_i^{a,b} \in I := [(p_0-1)\gamma/R, (p_0+2)\gamma/R)$. Hence, we have the following: 
\begin{eqnarray*}
	&&\frac{1}{N^2} \sum_{i,j} (\tm_i^{a,b} - \tm_j^{a,b})^2\\ &\le& \frac{2}{N^2} \sum_{i \in S_{p_0+2}, 1\le j\le N} (\tm_i^{a,b} - \tm_j^{a,b})^2 + \frac{2}{N^2} \sum_{i \in S_{p_0-1}^c, 1\le j\le N} (\tm_i^{a,b} - \tm_j^{a,b})^2\\ &+& \frac{1}{N^2} \sum_{i,j:~ \tm_i^{a,b}, \tm_j^{a,b} \in I} (\tm_i^{a,b} - \tm_j^{a,b})^2\\&\le & \frac{2|S_{p_0+2}|}{N}(4\gamma^2) + \frac{2|S_{p_0-1}^c|}{N}(4\gamma^2) + \frac{9\gamma^2}{R^2}\\ &\le & 16\varepsilon \gamma^2  + \frac{9\gamma^2}{R^2} < \frac{4\delta}{q(q-1)}. 
\end{eqnarray*} 
where the last inequality follows from the choice of $\varepsilon$ and $R$ above. But this contradicts \eqref{tocontr66}, thus verifying the claim. 

Using the claim, the colors $a$ and $b$ satisfy $\tm_i^{a,b} < (p_0-1)\gamma/R$ for at least $\varepsilon N$ many $i$ and $\tm_i^{a,b} \ge p_0\gamma/R$ for at least $\varepsilon N$ many $i$. Let us now define the following four events:
$$\mathcal{E}_{1,a} :=  \left\{\bm x\in [q]^N: x_i\ne a~\forall i \in S_{p_0}\right\} , \quad \mathcal{E}_{1,b} := \left\{\bm x\in [q]^N: x_i\ne b~\forall i \in S_{p_0}\right\}$$ $$\mathcal{E}_{2,a} :=  \left\{\bm x\in [q]^N: x_i\ne a~\forall i \in S_{p_0}^c\right\} , \quad \mathcal{E}_{2,b} := \left\{\bm x\in [q]^N: x_i\ne b~\forall i \in S_{p_0}^c\right\}.$$
We claim that each of the above four sets, intersected with $E_N(\delta)^c$, has probability $o(1)$ of containing $\bm X$ as $N\rightarrow \infty$.
\vspace{0.1in}

\noindent \textbf{Proof of Claim:}
Define a function {$g:\mathbb{R}\mapsto [0,\infty)$ as:
\[   
g(x) := 
\begin{cases}
	0 &\quad\text{if}~x\le 0\\
	x^2 &\quad\text{if}~x\in (0,0.5)\\
	x-0.25 &\quad\text{if}~x >0.5.\\ 
\end{cases}
\]
Then it is straightforward to check that $g$ is differentiable on $\mathbb{R}$ with derivative bounded by $1$. Also, $g$ is non-decreasing, strictly positive on the positive axis, and bounded on compact intervals.
Let us now define:
{\color{black}\begin{align*}h_1({\bm x}) := &\max_{c\in F} \max_{r,s \in [q]} \sum_{i=1}^N \left(\mathbbm{1}_{x_i\ne r} - \mathbb{P}(X_i\ne r|X_j=x_j,{j\ne i})\right) g\left(\tm_i^{r,s}({\bm x}) - c\right),\\
h_2({\bm x}) := &\max_{c\in F} \max_{r,s \in [q]} \sum_{i=1}^N \left(\mathbbm{1}_{x_i\ne r} - \mathbb{P}(X_i\ne r|X_j=x_j,{j\ne i})\right) g\left(c - \tm_i^{r,s}({\bm x})\right),
\end{align*}}
where $$F := \left\{\frac{p\gamma}{R}: p \in \mathbb{Z}\cap[-R,R+1]\right\}.$$
Whenever $\bm x\in \mathcal{E}_{1,a}\cap E_N(\delta)^c$, we have: 
\begin{eqnarray*}\label{kappa1scn}
 h_1(\bm x) &\ge& \sum_{i=1}^N \left(\mathbbm{1}_{x_i\ne a} - \mathbb{P}(X_i\ne a|X_j=x_j,{j\ne i})\right) g\left(\tm_i^{a,b}({\bm x}) - \frac{p_0\gamma}{R}\right)\nonumber\\ &\ge&  \alpha \sum_{i\in S_{p_0}} g\left(\tm_i^{a,b}(\bm x) - \frac{p_0\gamma}{R}\right) \ge \alpha \sum_{i\in S_{p_0+1}} g\left(\tm_i^{a,b}(\bm x) - \frac{p_0\gamma}{R}\right) \ge \alpha \varepsilon N g\left(\frac{\gamma}{R}\right), 
\end{eqnarray*} 
where 
\begin{equation}\label{alphadef4}
    \alpha := q^{-1}\exp\left\{-\beta \gamma -2\|\bm B\|_\infty\right\} > 0.
\end{equation}

 Here the first inequality in the second line uses the fact that $x_i\ne a$ for $i\in S_{p_0}$, by definition of $\mathcal{E}_{1,a}$, and the last inequality uses the fact that $|S_{p_0+1}|\ge \varepsilon N$.
Applying Lemma \ref{bp12} with $b_{itrs}:= \mathbbm{1}_{t\ne r}$, $\lambda =1$, $t=N [\alpha \varepsilon g\Big(\frac{\gamma}{R}\Big)]^2$ and a union bound we get
\begin{align*}
   \p(\bm X\in \mathcal{E}_{1,a}\cap E_N(\delta)^c) &\le   \p\left(|h_1(\bm X)|>\alpha \varepsilon N g\Big(\frac{\gamma}{R}\Big)\right)\\ &\le 2(2R+2)q^2 \exp\Big(-CN \Big[\alpha \varepsilon g\Big(\frac{\gamma}{R}\Big)\Big]^2\Big)\to 0.
\end{align*}
One can similarly show that $\p(\bm X \in \mathcal{E}_{1,b}\cap E_N(\delta)^c)= o(1)$. Also, for $\bm x \in \mathcal{E}_{2,a}\cap E_N(\delta)^c$, a similar calculation gives
\begin{eqnarray}\label{kappa1scn4}
 h_2(\bm x) &\ge& \sum_{i=1}^N \left(\mathbbm{1}_{x_i\ne a} - \mathbb{P}(X_i\ne a|X_j=x_j,{j\ne i})\right) g\left(\frac{p_0\gamma}{R} - \tm_i^{a,b}({\bm x})\right)\nonumber\\ &\ge&\alpha \sum_{i\in S_{p_0}^c} g\left(\frac{p_0\gamma}{R} - \tm_i^{a,b}(\bm x) \right)\alpha \sum_{i\in S_{p_0-1}^c} g\left(\frac{p_0\gamma}{R} - \tm_i^{a,b}(\bm x)\right) \ge \alpha \varepsilon N g\left(\frac{ \gamma}{R}\right).
\end{eqnarray}
Once again, by Lemma \ref{bp12}, the probability that $\bm X$ satisfies \eqref{kappa1scn4} is $o(1)$ as $N\rightarrow \infty$, which proves that $\p(\bm X \in \mathcal{E}_{2,a}\cap E_N(\delta)^c) = o(1)$.
One can similarly show that $\p(\bm X \in \mathcal{E}_{2,b}\cap E_N(\delta)^c)= o(1)$, which completes the proof of the claim.
\\

When ${\bm x}$ belongs to $\mathcal{E}_{1,a}^c\bigcap \mathcal{E}_{1,b}^c\bigcap \mathcal{E}_{2,a}^c\bigcap \mathcal{E}_{2,b}^c$, there exist $i,j,k,\ell \in [N]$, such that:
$x_i=a, x_j = b, x_k=a, x_\ell=b$, and $$\max\{\tm_i^{a,b} (\bm x),\tm_j^{a,b} (\bm x)\} < \frac{(p_0)\gamma}{R} \le \min \{\tm_k^{a,b} (\bm x),\tm_\ell^{a,b} (\bm x)\}~,$$ and so ${\bm x}\in \Omega_N$ (as introduced in the statement of Theorem \ref{thm:theorem1} (a)). Hence, for all $\delta >0$, we have:
\begin{align*}\p(\bm X \in \Omega_N^c) \le &\p (\bm X\in E_N(\delta))+\p( \bm X\in \Omega_N^c\cap E_N(\delta)^c)\\
\le &\p (\bm X\in E_N(\delta))+\p\left(\bm X\in \Big( \mathcal{E}_{1,a}\cup \mathcal{E}_{1,b}\cup \mathcal{E}_{2,a}\cup \mathcal{E}_{2,b}\Big)\cap E_N(\delta)^c\right).
\end{align*}
The second term in the RHS  converges to $0$ on letting $N\to \infty$ as shown above, and the first term converges to $0$ on taking a supremum over $N$ followed by $N\to \infty$ using \eqref{tightdel}, thus showing that
$ \p(\bm X \in \Omega_N^c) = o(1).$


Next, we show that $\p(\bm X \notin \Lambda_N) = o(1)$. For $\bm x\in \Lambda_N^c$, there exists $r\in [q]$ such that $x_i \neq r$ for all $i$. This implies that for all $i\in [N]$, we have $$\sum_{s \neq r}\mathbbm{1}_{x_i=s}=1 \Rightarrow \sum_{s \neq r}(\mathbbm{1}_{x_i=s}-\theta_{i,s}({\bm x})) = 1- \sum_{s \ne r}\theta_{i,s}(\bm x)~,$$
where $ \theta_{i,s}(\bm x) = \mathbb{P}(X_i=s|X_j=x_j,{j\ne i})$
 is as in \eqref{defcondprob881}.
Now, we have $1- \sum_{s \ne r}\theta_{i,s}(\bm x) \ge \alpha$ (see \eqref{alphadef4} for the definition of $\alpha$), which implies: 
$$\sum_{s\ne r}\sum_{i=1}^N(\mathbbm{1}_{x_i=s}-\theta_{i,s}({\bm x}))\ge \frac{N}{q \exp(\beta \gamma+2 \max_{r\in [q] }|B_r|)}.$$
Now, by Lemma \ref{bp12} (taking $b_{itrs}:= \mathbbm{1}_{t\ne r}$ and $g \equiv 1$), we have:
$$   \sum_{s \neq r} \sum_{i=1}^N(\mathbbm{1}_{X_i=s}-\theta_{i,s}({\bm x}))=O_{\p}(\sqrt{N}).
$$
Therefore, $\p(\bm X\in \Lambda_N^c) = o(1)$, completing the proof of \eqref{joincg5}. The proof of \eqref{joincg6} will follow from part (b) above.
\vspace{0.1in}


\section{Numerical Study}\label{sec:numstudy}

To illustrate our theoretical results, we simulate observations from the Potts model on the Erd\H{o}s-R\'enyi graph $\mathcal{G}(N,p)$ with $N=100$ and $200$, and $p=0.025$ and $0.25$. The $p=0.025$ case illustrates the sparse regime where joint estimation is possible, and the $p=0.25$ case illustrates the dense regime where joint estimation is not possible on the level-sets of the map $(\beta,\bm B) \mapsto \arg \max_{\bm t} H_{\beta,\bm B}(\bm t)$ provided this maximizer is unique. We do this for $q=3$ for the ease of representation of the numerical results through $3D$ graphs. One can easily derive using Lagrange multipliers, that for each such unique maximizer $\bm m$, the inestimability curve $\Theta_{\bm m}$ is a straight line that has equation:
$$B_1 = \log\left(\frac{m_1\exp(\beta m_3)}{m_3\exp(\beta m_1)}\right)\quad\text{and}\quad B_2 = \log\left(\frac{m_2\exp(\beta m_3)}{m_3\exp(\beta m_2)}\right)$$
We call this straight line the \textit{line of inestimability}, which is plotted in blue in Figure \ref{fig:solutionpathsbm}. We take $\bm m = (0.2,0.5,0.3)$, and for each value of $\beta$ within the range $0$ to $2$ in increments of $0.01$, compute the MPL estimates based on samples generated from the Erd\H{o}s-R\'enyi Potts model with the corresponding true parameters lying on the line of inestimability. We use Gibbs sampling for the simulation.

For both $N=100$ and $N=200$, the green points, representing the MPL estimates for the sparse case, lie very close to the line of inestimability, thereby supporting our result that joint estimation is always possible in the sparse (bounded-degree) case. The closeness increases from $N=100$ to $N=200$, as indicated by the $N^{-1/2}$ rate of convergence. On the other hand, the red points, representing the MPL estimates for the dense case, seem to scatter away from the blue line, a phenomenon which is best observed from the significantly high number of red points that have very large values of $\hat{\beta}$ in comparison to the height of the blue line. The green points, on the other hand, have $\hat{\beta}$ values lying more or less within the height limits of the line of inestimability. This demonstrates the inestimability phenomenon in the dense case.

\begin{figure}
		\begin{minipage}[b]{0.49\linewidth}
			\centering
			\includegraphics[width=\textwidth]{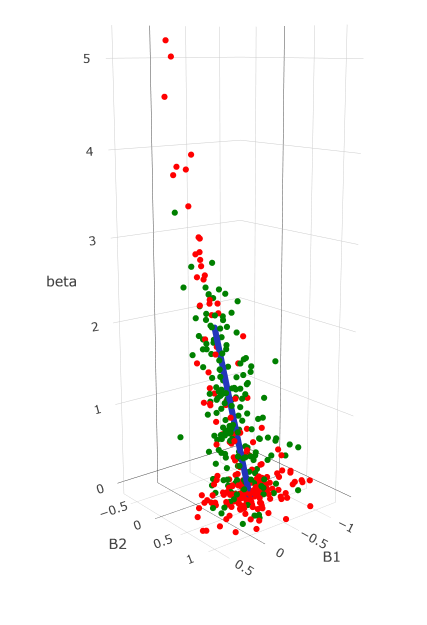}
			\caption*{(a)}
		\end{minipage}
		\begin{minipage}[b]{0.49\linewidth}
			\centering
			\includegraphics[width=\textwidth]{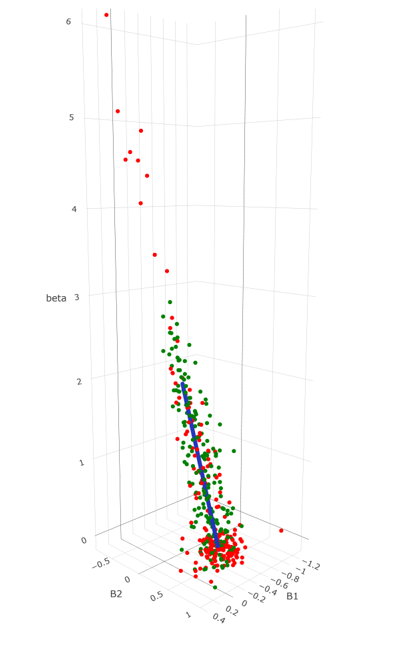}
			\caption*{(b)}
		\end{minipage}
		\caption{\small{Plot of the MPL estimate $(\hat{\beta},\hat{B}_1,\hat{B}_2)$ for the Potts model on $\mathcal{G}(N,p)$ with (a)  $N=100$ and (b)  $N=200$. Blue line denotes the line of inestimability, green points denote the MPL estimates for $p=0.025$ (the sparse case) and red points denote the MPL estimates for $p=0.25$ (the dense case).}}
		\label{fig:solutionpathsbm} 
	\end{figure}

\section{Acknowledgement}
The first author gratefully acknowledges support from the National University of Singapore Start-Up Grant A-0008523-00-00 and the AcRF Tier 1 grants A-8001449-00-00 and A-8002932-00-00.
The second author gratefully acknowledges NSF support (DMS-2515519, DMS-2113414) during this research. The third author gratefully acknowledges NSF support (DMS-2124222) during this research. We also thank Moumanti Podder for her valuable assistance with an earlier version of this manuscript.

\bibliographystyle{plainnat}
\bibliography{potts}

\begin{thebibliography}{75}
\providecommand{\natexlab}[1]{#1}
\providecommand{\url}[1]{\texttt{#1}}
\expandafter\ifx\csname urlstyle\endcsname\relax
  \providecommand{\doi}[1]{doi: #1}\else
  \providecommand{\doi}{doi: \begingroup \urlstyle{rm}\Url}\fi

\bibitem[Ali et~al.(2008)Ali, Farag, and Gimel'farb]{estimation_1}
Asem~M. Ali, Aly~A. Farag, and Georgy Gimel'farb.
\newblock Analytical method for mgrf potts model parameter estimation.
\newblock In \emph{2008 19th International Conference on Pattern Recognition},
  pages 1--4. IEEE, 2008.

\bibitem[Anandkumar et~al.(2012)Anandkumar, Tan, Huang, and
  Willsky]{estimation_13}
Animashree Anandkumar, Vincent Y.~F. Tan, Furong Huang, and Alan~S. Willsky.
\newblock High-dimensional structure estimation in ising models: Local
  separation criterion.
\newblock \emph{The Annals of Statistics}, 40\penalty0 (3):\penalty0
  1346--1375, 2012.
\newblock \doi{10.1214/12-AOS1009}.

\bibitem[Aoun et~al.(2024)Aoun, Dober, and Glazman]{AounDoberGlazman2024}
Yacine Aoun, Moritz Dober, and Alexander Glazman.
\newblock Phase diagram of the ashkin--teller model.
\newblock \emph{Communications in Mathematical Physics}, 405:\penalty0 37,
  2024.
\newblock \doi{10.1007/s00220-023-04925-0}.

\bibitem[Ashkin and Teller(1943)]{AshkinTeller1943}
J.~Ashkin and E.~Teller.
\newblock Statistics of two-dimensional lattices with four components.
\newblock \emph{Physical Review}, 64\penalty0 (5-6):\penalty0 178--184, 1943.
\newblock \doi{10.1103/PhysRev.64.178}.

\bibitem[Basak and Mukherjee(2017)]{mukherjeebasak}
Anirban Basak and Sumit Mukherjee.
\newblock Universality of the mean-field for the potts model.
\newblock \emph{Probability theory and related fields}, 168:\penalty0 557--600,
  2017.

\bibitem[Berthet et~al.(2019)Berthet, Rigollet, and
  Srivastava]{berthet2019exact}
Quentin Berthet, Philippe Rigollet, and Piyush Srivastava.
\newblock Exact recovery in the ising blockmodel.
\newblock \emph{The Annals of Statistics}, 47\penalty0 (4):\penalty0
  1805--1834, 2019.
\newblock \doi{10.1214/17-AOS1620}.

\bibitem[Besag(1974)]{besag1974spatial}
Julian Besag.
\newblock Spatial interaction and the statistical analysis of lattice systems.
\newblock \emph{Journal of the Royal Statistical Society. Series B
  (Methodological)}, 36:\penalty0 192--236, 1974.

\bibitem[Besag(1975)]{besag1975statistical}
Julian Besag.
\newblock Statistical analysis of non-lattice data.
\newblock \emph{The Statistician}, 24\penalty0 (3):\penalty0 179--195, 1975.

\bibitem[Bhamidi et~al.(2011)Bhamidi, Bresler, and Sly]{bhamidi2011mixing}
Shankar Bhamidi, Guy Bresler, and Allan Sly.
\newblock Mixing time of exponential random graphs.
\newblock \emph{The Annals of Applied Probability}, 21\penalty0 (6):\penalty0
  2146--2170, 2011.
\newblock \doi{10.1214/10-AAP740}.

\bibitem[Bhattacharya and Mukherjee(2018)]{estimation_14}
Bhaswar~B. Bhattacharya and Sumit Mukherjee.
\newblock Inference in ising models.
\newblock \emph{Bernoulli}, 24\penalty0 (1):\penalty0 493--525, 2018.

\bibitem[Bhowal and Mukherjee(2023)]{snbh22}
Sanchayan Bhowal and Somabha Mukherjee.
\newblock Supplement to ``limit theorems and phase transitions in the tensor
  curie--weiss potts model''.
\newblock \emph{Information and Inference: A Journal of the IMA}, pages 1--43,
  2023.
\newblock \doi{10.48550/arXiv.2307.01052}.

\bibitem[Bhowal and Mukherjee(2025{\natexlab{a}})]{snbh1}
Sanchayan Bhowal and Somabha Mukherjee.
\newblock Limit theorems and phase transitions in the tensor curie-weiss potts
  model.
\newblock \emph{Information and Inference: A Journal of the IMA}, 14\penalty0
  (2):\penalty0 iaaf014, 05 2025{\natexlab{a}}.
\newblock ISSN 2049-8772.
\newblock \doi{10.1093/imaiai/iaaf014}.
\newblock URL \url{https://doi.org/10.1093/imaiai/iaaf014}.

\bibitem[Bhowal and Mukherjee(2025{\natexlab{b}})]{snbh2}
Sanchayan Bhowal and Somabha Mukherjee.
\newblock Rates of convergence of the magnetization in the tensor curie--weiss
  potts model.
\newblock \emph{Journal of Statistical Physics}, 192\penalty0 (2):\penalty0 2,
  2025{\natexlab{b}}.
\newblock \doi{10.1007/s10955-024-03382-w}.

\bibitem[Boas et~al.(2018)Boas, Jiang, Merks, Prokopiou, and
  Rens]{Boas2018PottsBio}
S.~E.~M. Boas, Y.~Jiang, R.~M.~H. Merks, S.~A. Prokopiou, and E.~G. Rens.
\newblock Cellular potts model: Applications to vasculogenesis and
  angiogenesis.
\newblock \emph{Probabilistic Cellular Automata}, 27:\penalty0 279--310, 2018.

\bibitem[Bornholdt(2021)]{Bornholdt2021PottsMarkets}
Stefan Bornholdt.
\newblock A q-spin potts model of markets: Gain--loss asymmetry in stock
  indices as an emergent phenomenon.
\newblock \emph{arXiv preprint arXiv:2112.06290}, 2021.

\bibitem[Bosconti et~al.(2015)Bosconti, Corallo, Fortunato, Gentile, Massafra,
  and Pell\`e]{Bosconti2015SocialPotts}
Cristian Bosconti, Angelo Corallo, Laura Fortunato, Antonio~A. Gentile, Andrea
  Massafra, and Piergiuseppe. Pell\`e.
\newblock Reconstruction of a real world social network using the potts model
  and loopy belief propagation.
\newblock \emph{Frontiers in Psychology}, 6, 2015.

\bibitem[Bresler(2015)]{comp_2}
Guy Bresler.
\newblock Efficiently learning ising models on arbitrary graphs.
\newblock In \emph{Proceedings of the forty-seventh annual ACM symposium on
  Theory of computing}, pages 771--782, 2015.

\bibitem[Cao et~al.(2022)Cao, Neykov, and Liu]{cao2022hightemperature}
Yuan Cao, Matey Neykov, and Han Liu.
\newblock High-temperature structure detection in ferromagnets.
\newblock \emph{Information and Inference: A Journal of the IMA}, 11\penalty0
  (1):\penalty0 55--102, 2022.
\newblock \doi{10.1093/imaiai/iaaa032}.

\bibitem[Celeux et~al.(2002)Celeux, Forbes, and
  Peyrard]{Celeux2002PottsSegmentation}
Gilles Celeux, Florence Forbes, and Nathalie Peyrard.
\newblock Em-based image segmentation using potts models with external field.
\newblock Technical Report RR-4456, INRIA, 2002.

\bibitem[Chatterjee(2007{\natexlab{a}})]{chatterjee2007stein}
Sourav Chatterjee.
\newblock Stein’s method for concentration inequalities.
\newblock \emph{Probability theory and related fields}, 138\penalty0
  (1-2):\penalty0 305--321, 2007{\natexlab{a}}.

\bibitem[Chatterjee(2007{\natexlab{b}})]{estimation_15}
Sourav Chatterjee.
\newblock Estimation in spin glasses: A first step.
\newblock \emph{The Annals of Statistics}, 35\penalty0 (5):\penalty0
  1931--1946, 2007{\natexlab{b}}.

\bibitem[Chatterjee and Dembo(2016)]{chatterjee2016nonlinear}
Sourav Chatterjee and Amir Dembo.
\newblock Nonlinear large deviations.
\newblock \emph{Advances in Mathematics}, 299:\penalty0 396--450, 2016.

\bibitem[Chatterjee and Diaconis(2013)]{egrm_2}
Sourav Chatterjee and Persi Diaconis.
\newblock Estimating and understanding exponential random graph models.
\newblock \emph{The Annals of Statistics}, 41\penalty0 (5):\penalty0
  2428--2461, 2013.

\bibitem[Chen et~al.(2024)Chen, Sen, and Wu]{ChenSenWu2024o}
Wei-Kuo Chen, Arnab Sen, and Qiang Wu.
\newblock Joint parameter estimations for spin glasses.
\newblock \emph{arXiv preprint arXiv:2406.10760}, 2024.
\newblock URL \url{https://arxiv.org/abs/2406.10760}.

\bibitem[Comets and Gidas(1991)]{comets}
Francis Comets and Basilis Gidas.
\newblock Asymptotics of maximum likelihood estimators for the curie--weiss
  model.
\newblock \emph{The Annals of Statistics}, 19\penalty0 (2):\penalty0 557--578,
  1991.
\newblock \doi{10.1214/aos/1176348111}.

\bibitem[Daskalakis et~al.(2020)Daskalakis, Dikkala, and
  Panageas]{daskalakis2020logistic}
Constantinos Daskalakis, Nishanth Dikkala, and Ioannis Panageas.
\newblock Logistic regression with peer-group effects via inference in
  higher-order ising models.
\newblock In \emph{International Conference on Artificial Intelligence and
  Statistics}, pages 3653--3663. PMLR, 2020.

\bibitem[Deb and Mukherjee(2023)]{deb2020fluctuations}
Nabarun Deb and Sumit Mukherjee.
\newblock Fluctuations in mean-field ising models.
\newblock \emph{The Annals of Applied Probability}, 33\penalty0 (3):\penalty0
  1961--2003, June 2023.
\newblock \doi{10.1214/22-AAP1857}.

\bibitem[Deb et~al.(2024)Deb, Mukherjee, Mukherjee, and Yuan]{deb2020detecting}
Nabarun Deb, Rajarshi Mukherjee, Sumit Mukherjee, and Ming Yuan.
\newblock Detecting structured signals in {Ising} models.
\newblock \emph{The Annals of Applied Probability}, 34\penalty0 (1A):\penalty0
  1--45, 2024.
\newblock \doi{10.1214/23-AAP1929}.

\bibitem[DeMuse et~al.(2019)DeMuse, Easlick, and Yin]{DeMuseEaslickYin2019}
Ryan DeMuse, Terry Easlick, and Mei Yin.
\newblock Mixing time of vertex-weighted exponential random graphs.
\newblock \emph{J. Comput. Appl. Math.}, 362:\penalty0 443--459, 2019.

\bibitem[Descombes et~al.(1999)Descombes, Morris, Zerubia, and
  Berthod]{estimation_3}
Xavier Descombes, Robin~D Morris, Josiane Zerubia, and Marc Berthod.
\newblock Estimation of markov random field prior parameters using markov chain
  monte carlo maximum likelihood.
\newblock \emph{IEEE Transactions on Image Processing}, 8\penalty0
  (7):\penalty0 954--963, 1999.

\bibitem[Ding et~al.(2009)Ding, Lubetzky, and Peres]{DingLubetzkyPeres2009}
Jian Ding, Eyal Lubetzky, and Yuval Peres.
\newblock The mixing time evolution of glauber dynamics for the mean-field
  ising model.
\newblock \emph{Communications in Mathematical Physics}, 289:\penalty0
  725--764, 2009.
\newblock \doi{10.1007/s00220-009-0781-9}.

\bibitem[Eichelsbacher and Martschink(2015)]{eichels}
Peter Eichelsbacher and Bastian Martschink.
\newblock On rates of convergence in the curie--weiss--potts model with an
  external field.
\newblock \emph{Annales de l'Institut Henri Poincar{\'e}, Probabilit{\'e}s et
  Statistiques}, 51\penalty0 (1):\penalty0 252--282, 2015.
\newblock \doi{10.1214/14-AIHP599}.

\bibitem[Ellis(1985)]{ellis1985entropy}
Richard~S. Ellis.
\newblock \emph{Entropy, Large Deviations, and Statistical Mechanics}.
\newblock Springer, New York, 1985.

\bibitem[Ellis and Wang(1990)]{ellis1990limit}
Richard~S. Ellis and Kongming Wang.
\newblock Limit theorems for the empirical vector of the curie-weiss-potts
  model.
\newblock \emph{Stochastic Processes and their Applications}, 35\penalty0
  (1):\penalty0 59--79, 1990.

\bibitem[Ellis and Wang(1992)]{ellis1992}
Richard~S. Ellis and Kongming Wang.
\newblock Limit theorems for maximum likelihood estimators in the
  curie--weiss--potts model.
\newblock \emph{Stochastic Processes and their Applications}, 40\penalty0
  (2):\penalty0 251--288, 1992.

\bibitem[Ellis et~al.(1980)Ellis, Newman, and Rosen]{ellis1980limit}
Richard~S. Ellis, Charles~M. Newman, and Jay~S. Rosen.
\newblock Limit theorems for sums of dependent random variables occurring in
  statistical mechanics.
\newblock \emph{Zeitschrift f{\"u}r Wahrscheinlichkeitstheorie und Verwandte
  Gebiete}, 51:\penalty0 153--169, 1980.

\bibitem[Gandolfo et~al.(2010)Gandolfo, Ruiz, and Marc]{gandolfo}
Daniel Gandolfo, Jean Ruiz, and Wouts Marc.
\newblock Limit theorems and coexistence probabilities for the curie–weiss
  potts model with an external field.
\newblock \emph{Stochastic Processes and their Applications}, 120:\penalty0
  84--104, 2010.

\bibitem[Ghosal and Mukherjee(2020)]{ghosal2020joint}
Promit Ghosal and Sumit Mukherjee.
\newblock Joint estimation of parameters in ising model.
\newblock \emph{The Annals of Statistics}, 48\penalty0 (2):\penalty0 785--810,
  2020.

\bibitem[Gimenez et~al.(2013)Gimenez, Frery, and Flesia]{estimation_2}
Javier Gimenez, Alejandro~C. Frery, and Ana~Georgina Flesia.
\newblock Inference strategies for the smoothness parameter in the {P}otts
  model.
\newblock In \emph{2013 IEEE International Geoscience and Remote Sensing
  Symposium-IGARSS}, pages 2539--2542. IEEE, 2013.

\bibitem[Graner and Glazier(1992)]{Graner1992CellularPotts}
Fran{\c{c}}ois Graner and James~A. Glazier.
\newblock Simulation of biological cell sorting using a two-dimensional
  extended potts model.
\newblock \emph{Physical Review Letters}, 69\penalty0 (13):\penalty0
  2013--2016, 1992.
\newblock \doi{10.1103/PhysRevLett.69.2013}.

\bibitem[Handcock et~al.(2003)Handcock, Robins, Snijders, Moody, and
  Besag]{handcock2003assessing}
Mark~S. Handcock, Garry Robins, Tom Snijders, Jim Moody, and Julian Besag.
\newblock Assessing degeneracy in statistical models of social networks.
\newblock Technical report, Working paper, 2003.

\bibitem[He and Lok(2025)]{HeLok2025}
Roxanne He and Jackie Lok.
\newblock On approximating the potts model with contracting glauber dynamics.
\newblock \emph{Probability in the Engineering and Informational Sciences},
  2025.
\newblock \doi{10.1017/S0269964825000336}.
\newblock Published online November 7, 2025.

\bibitem[Ising(1925)]{ising}
Ernst Ising.
\newblock Beitrag zur theorie des ferromagnetismus.
\newblock \emph{Zeitschrift für Physik}, 31:\penalty0 253--258, 1925.

\bibitem[Kenna(2005)]{Kenna2005TheXM}
Ralph Kenna.
\newblock The xy model and the berezinskii--kosterlitz--thouless phase
  transition.
\newblock \emph{arXiv preprint arXiv:cond-mat/0512356}, 2005.
\newblock \doi{10.48550/arXiv.cond-mat/0512356}.

\bibitem[Kirkpatrick and Nawaz(2016)]{KirkpatrickNawaz2016}
Kay Kirkpatrick and Tayyab Nawaz.
\newblock Asymptotics of mean-field {O(N)} models.
\newblock \emph{Journal of Statistical Physics}, 165\penalty0 (6):\penalty0
  1114--1140, 2016.
\newblock \doi{10.1007/s10955-016-1667-9}.

\bibitem[Levada et~al.(2008{\natexlab{a}})Levada, Mascarenhas, and
  Tann{\'u}s]{estimation_10}
Alexandre L.~M. Levada, Nelson D.~A. Mascarenhas, and Alberto Tann{\'u}s.
\newblock Pseudolikelihood equations for potts mrf model parameter estimation
  on higher order neighborhood systems.
\newblock \emph{IEEE Geoscience and Remote Sensing Letters}, 5\penalty0
  (3):\penalty0 522--526, 2008{\natexlab{a}}.

\bibitem[Levada et~al.(2008{\natexlab{b}})Levada, Mascarenhas, Tann{\'u}s, and
  Salvadeo]{estimation_11}
Alexandre L.~M. Levada, Nelson D.~A. Mascarenhas, Alberto Tann{\'u}s, and
  Denis~HP Salvadeo.
\newblock Spatially non-homogeneous potts model parameter estimation on
  higher-order neighborhood systems by maximum pseudo-likelihood.
\newblock In \emph{Proceedings of the 2008 ACM symposium on Applied computing},
  pages 1733--1737, 2008{\natexlab{b}}.

\bibitem[Levada et~al.(2009)Levada, Mascarenhas, and
  Tann\'us]{Levada2009PottsImage}
Alexandre L.~M. Levada, Nelson D.~A. Mascarenhas, and Alberto Tann\'us.
\newblock Pseudo-likelihood equations for potts model on higher-order
  neighborhood systems: A quantitative approach for parameter estimation in
  image analysis.
\newblock \emph{Brazilian Journal of Probability and Statistics}, 23\penalty0
  (2):\penalty0 120--140, 2009.

\bibitem[Levin et~al.(2010)Levin, {\L}uczak, and Peres]{LevinLuczakPeres2010}
David~A. Levin, Malwina~J. {\L}uczak, and Yuval Peres.
\newblock Glauber dynamics for the mean-field ising model: Cutoff, critical
  power law, and metastability.
\newblock \emph{Probability Theory and Related Fields}, 146\penalty0
  (1-2):\penalty0 223--265, 2010.
\newblock \doi{10.1007/s00440-008-0173-0}.

\bibitem[Lokhov et~al.(2018)Lokhov, Vuffray, Misra, and Chertkov]{lokhov1}
Andrey~Y. Lokhov, Marc Vuffray, Sidhant Misra, and Michael Chertkov.
\newblock Optimal structure and parameter learning of ising models.
\newblock \emph{Science Advances}, 4\penalty0 (3):\penalty0 e1700791, 2018.
\newblock \doi{10.1126/sciadv.1700791}.

\bibitem[Lov{\'a}sz(2012)]{lovasz2012}
L{\'a}szl{\'o} Lov{\'a}sz.
\newblock \emph{Large Networks and Graph Limits}, volume~60 of \emph{American
  Mathematical Society Colloquium Publications}.
\newblock American Mathematical Society, Providence, RI, 2012.
\newblock \doi{10.1090/coll/060}.

\bibitem[McGrory et~al.(2009)McGrory, Titterington, Reeves, and
  Pettitt]{estimation_5}
Clare~A McGrory, D~Michael Titterington, Robert Reeves, and Anthony~N Pettitt.
\newblock Variational bayes for estimating the parameters of a hidden potts
  model.
\newblock \emph{Statistics and Computing}, 19\penalty0 (3):\penalty0 329--340,
  2009.

\bibitem[Moltchanova et~al.(2005)Moltchanova, Pitk\"aniemi, and
  Haapala]{Moltchanova2005PottsHaplotype}
Elena~V. Moltchanova, Janne Pitk\"aniemi, and Laura. Haapala.
\newblock Potts model for haplotype associations.
\newblock \emph{BMC Genetics}, 6\penalty0 (Suppl 1):\penalty0 S64, 2005.

\bibitem[Mukherjee et~al.(2018)Mukherjee, Mukherjee, and
  Yuan]{mukherjee2018global}
Rajarshi Mukherjee, Sumit Mukherjee, and Ming Yuan.
\newblock Global testing against sparse alternatives under ising models.
\newblock \emph{The Annals of Statistics}, 46\penalty0 (5):\penalty0
  2062--2093, 2018.

\bibitem[Mukherjee et~al.(2021)Mukherjee, Son, and Bhattacharya]{tensor2}
Somabha Mukherjee, Jaesung Son, and Bhaswar~B. Bhattacharya.
\newblock Fluctuations of the magnetization in the p-spin curie--weiss model.
\newblock \emph{Communications in Mathematical Physics}, 387:\penalty0
  681--728, 2021.
\newblock \doi{10.1007/s00220-021-04072-6}.

\bibitem[Mukherjee et~al.(2022)Mukherjee, Son, and Bhattacharya]{tensor1}
Somabha Mukherjee, Jaesung Son, and Bhaswar~B. Bhattacharya.
\newblock Estimation in tensor ising models.
\newblock \emph{Information and Inference: A Journal of the IMA}, 11\penalty0
  (4):\penalty0 1457--1500, 2022.
\newblock \doi{10.1093/imaiai/iaac007}.

\bibitem[Mukherjee et~al.(2024)Mukherjee, Son, Ghosh, and Mukherjee]{tensor4}
Somabha Mukherjee, Jaesung Son, Swarnadip Ghosh, and Sourav Mukherjee.
\newblock Efficient estimation in tensor curie--weiss and
  erd{\H{o}}s--r{\'e}nyi ising models.
\newblock \emph{Electronic Journal of Statistics}, 18\penalty0 (1):\penalty0
  2405--2449, 2024.
\newblock \doi{10.1214/24-EJS2255}.

\bibitem[Mukherjee et~al.(2025)Mukherjee, Son, and Bhattacharya]{tensor3}
Somabha Mukherjee, Jaesung Son, and Bhaswar~B. Bhattacharya.
\newblock Phase transitions of the maximum likelihood estimators in the p-spin
  curie--weiss model.
\newblock \emph{Bernoulli}, 31\penalty0 (2):\penalty0 1502--1526, 2025.
\newblock \doi{10.3150/24-BEJ1779}.

\bibitem[Mukherjee(2020)]{Mukherjee2020}
Sumit Mukherjee.
\newblock Degeneracy in sparse ergms with functions of degrees as sufficient
  statistics.
\newblock \emph{Bernoulli}, 26\penalty0 (2):\penalty0 1016--1043, 2020.
\newblock \doi{10.3150/19-BEJ1135}.

\bibitem[Mukherjee and Xu(2023)]{MukherjeeXu2023}
Sumit Mukherjee and Yuanzhe Xu.
\newblock Statistics of the two-star ergm.
\newblock \emph{Bernoulli}, 29\penalty0 (1):\penalty0 24--51, 2023.
\newblock \doi{10.3150/21-BEJ1448}.

\bibitem[Neykov and Liu(2019)]{neykov2019property}
Matey Neykov and Han Liu.
\newblock Property testing in high-dimensional ising models.
\newblock \emph{The Annals of Statistics}, 47\penalty0 (5):\penalty0
  2472--2503, 2019.
\newblock \doi{10.1214/18-AOS1754}.

\bibitem[Okabayashi et~al.(2011)Okabayashi, Johnson, and Geyer]{estimation_4}
Saisuke Okabayashi, Leif Johnson, and Charles~J. Geyer.
\newblock Extending pseudo-likelihood for potts models.
\newblock \emph{Statistica Sinica}, pages 331--347, 2011.

\bibitem[Pereyra et~al.(2013)Pereyra, Dobigeon, Batatia, and
  Tourneret]{estimation_7}
Marcelo Pereyra, Nicolas Dobigeon, Hadj Batatia, and Jean-Yves Tourneret.
\newblock Estimating the granularity coefficient of a potts-markov random field
  within a markov chain monte carlo algorithm.
\newblock \emph{IEEE Transactions on Image Processing}, 22\penalty0
  (6):\penalty0 2385--2397, 2013.

\bibitem[Pereyra et~al.(2014)Pereyra, Whiteley, Andrieu, and
  Tourneret]{estimation_8}
Marcelo Pereyra, Nick Whiteley, Christophe Andrieu, and Jean-Yves Tourneret.
\newblock Maximum marginal likelihood estimation of the granularity coefficient
  of a potts-markov random field within an mcmc algorithm.
\newblock In \emph{2014 IEEE Workshop on Statistical Signal Processing (SSP)},
  pages 121--124. IEEE, 2014.

\bibitem[Potts(1952)]{potts}
Renfrey~B. Potts.
\newblock Some generalized order-disorder transformations.
\newblock In \emph{Mathematical proceedings of the cambridge philosophical
  society}, volume~48, pages 106--109. Cambridge University Press, 1952.

\bibitem[Ravikumar et~al.(2010)Ravikumar, Wainwright, and
  Lafferty]{estimation_16}
Pradeep Ravikumar, Martin~J Wainwright, and John~D Lafferty.
\newblock High-dimensional ising model selection using $\ell_{1}$-regularized
  logistic regression.
\newblock \emph{The Annals of Statistics}, 38\penalty0 (3):\penalty0
  1287--1319, 2010.

\bibitem[Rosu et~al.(2015)Rosu, Giovannelli, Giremus, and Vacar]{estimation_9}
Roxana-Gabriela Rosu, Jean-Fran{\c{c}}ois Giovannelli, Audrey Giremus, and
  Cornelia Vacar.
\newblock Potts model parameter estimation in bayesian segmentation of
  piecewise constant images.
\newblock In \emph{2015 IEEE International Conference on Acoustics, Speech and
  Signal Processing (ICASSP)}, pages 4080--4084. IEEE, 2015.

\bibitem[Samanta et~al.(2024)Samanta, Mukherjee, and
  Zhang]{SamantaMukherjeeZhang2024}
Ramkrishna~Jyoti Samanta, Somabha Mukherjee, and Jiang Zhang.
\newblock Mixing phases and metastability for the glauber dynamics on the
  $p$-spin curie--weiss model.
\newblock \emph{arXiv preprint arXiv:2412.16952}, 2024.
\newblock \doi{10.48550/arXiv.2412.16952}.
\newblock Version 2, revised February 20, 2025.

\bibitem[Snijders et~al.(2006)Snijders, Pattison, Robins, and
  Handcock]{SnijdersEtAl2006}
Tom A.~B. Snijders, Philippa~E. Pattison, Garry~L. Robins, and Mark~S.
  Handcock.
\newblock New specifications for exponential random graph models.
\newblock \emph{Sociological Methodology}, 36\penalty0 (1):\penalty0 99--153,
  2006.
\newblock \doi{10.1111/j.1467-9531.2006.00176.x}.

\bibitem[Song et~al.(2016)Song, Si, Herrmann, and Feng]{estimation_6}
Sanming Song, Bailu Si, J~Michael Herrmann, and Xisheng Feng.
\newblock Local autoencoding for parameter estimation in a hidden potts-markov
  random field.
\newblock \emph{IEEE Transactions on Image Processing}, 25\penalty0
  (5):\penalty0 2324--2336, 2016.

\bibitem[Stivala et~al.(2020)Stivala, Robins, and Lomi]{egrm_4}
Alex Stivala, Garry Robins, and Alessandro Lomi.
\newblock Exponential random graph model parameter estimation for very large
  directed networks.
\newblock \emph{PloS one}, 15\penalty0 (1):\penalty0 e0227804, 2020.

\bibitem[Takaishi(2005)]{Takaishi2005PottsFinance}
Tetsuya Takaishi.
\newblock Simulations of financial markets in a potts-like model.
\newblock \emph{International Journal of Modern Physics C}, 16\penalty0 (8),
  2005.

\bibitem[Vuffray et~al.(2016)Vuffray, Misra, Lokhov, and Chertkov]{lokhov2}
Marc Vuffray, Sidhant Misra, Andrey~Y. Lokhov, and Michael Chertkov.
\newblock Interaction screening: Efficient and sample-optimal learning of ising
  models.
\newblock In \emph{Advances in Neural Information Processing Systems 29
  (NeurIPS 2016)}, 2016.

\bibitem[Wu(1982)]{wu1982potts}
F.~Y. Wu.
\newblock The potts model.
\newblock \emph{Reviews of Modern Physics}, 54\penalty0 (1):\penalty0 235--268,
  1982.
\newblock \doi{10.1103/RevModPhys.54.235}.

\bibitem[Zukovic and Hristopulos(2008)]{Zukovic2008SpatialPotts}
Milan Zukovic and Dionissios~T. Hristopulos.
\newblock Simulations of environmental spatial data using ising and potts
  models.
\newblock In \emph{SigmaPhi Conference}, Kolympari, Greece, 2008.

\end{thebibliography}

\newpage
\begin{appendix}

\section{Proof of Theorem \ref{bdgr}}\label{sec:bdgrproof}
In view of Theorem \ref{thm:theorem1} (c), it suffices to show that under condition \eqref{bddegr}, we have $T_N(\bm X)^{-1} = O_\p(1)$. Now, for every $\delta > 0$, define the set:
\begin{equation}\label{endef86}
    E_N(\delta) := \{\bm x\in [q]^N: T_N(\bm x) \le \delta\}.
\end{equation}
In order to prove that $T_N(\bm X)^{-1} = O_\p(1)$, it suffices to show the stronger conclusion that there exists $\delta\in (0,1)$ such that $\p_{\beta,\bm B}(\bm X\in E_N(\delta)) = o(1)$.
Towards this, \textcolor{black}{using the definition of $T_N$ from \eqref{eq:Tndefn}, we have:} 
\begin{eqnarray}
   T_N(\bm x)  &=& \sum_{1 \leqslant  r<s \leqslant  q} \left( \frac{1}{N}\sum_{i=1}^N (m_{i,r}(\bm x)-m_{i,s}(\bm x))^2 - (\overline{m}_r(\bm x)-\overline{m}_s(\bm x))^2\right)\notag\\ &=& \frac{1}{N}\sum_{i=1}^N \sum_{1\le r<s\le q} (m_{i,r}(\bm x) - m_{i,s}(\bm x))^2 - \sum_{1\le r<s\le q} (\overline{m}_r(\bm x)-\overline{m}_s(\bm x))^2\label{mixedmirGini}\\&=& \frac{1}{N}\sum_{i=1}^N \left(q\sum_{r=1}^q m_{i,r}(\bm x)^2 - \left(\sum_{r=1}^q m_{i,r}(\bm x)\right)^2\right) - \left(q \sum_{r=1}^q \overline{m}_r(\bm x)^{\,2} \;-\; \bar{R}^{\,2}\right)\notag\\&=&\frac{q}{N}\sum_{i=1}^N \sum_{r=1}^q \left[\left(m_{i,r}(\bm x)-\frac{R_i}{q}\right)^2  - \left( \overline{m}_r(\bm x) - \frac{\bar{R}}{q}\right)^2\right]\notag\\&=& \frac{q}{N}\sum_{i=1}^N \sum_{r=1}^q\left(m_{i,r}(\bm x) - \overline{m}_r(\bm x) - \frac{1}{q}(R_i-\bar{R})\right)^2.\label{TNderv}
\end{eqnarray}
where $R_i := \sum_{j=1}^N a_{ij}\quad\text{and} \quad \bar{R} := \frac{1}{N}\sum_{i=1}^N R_i.$ \textcolor{black}{(as in \eqref{irregap}).}
Hence, for $\bm x\in E_N(\delta)$ (as defined in \eqref{endef86}), we have by the Cauchy-Schwarz inequality:
\begin{eqnarray*}
    &&\left|\sum_{i=1}^N \sum_{r=1}^q m_{i,r}(\bm x) x_{i,r}- \sum_{i=1}^N \sum_{r=1}^q \left(\overline{m}_r(\bm x) + \frac{1}{q}(R_i-\bar{R})\right)x_{i,r}\right|\\&\le& \sqrt{qN}\sqrt{\sum_{i=1}^N \sum_{r=1}^q\left(m_{i,r}(\bm x) - \overline{m}_r(\bm x) - \frac{1}{q}(R_i-\bar{R})\right)^2} \\&\le& N\sqrt{\delta}.
\end{eqnarray*}
Since  $\sum_{i=1}^N \sum_{r=1}^q (R_i-\bar{R}) x_{i,r} = \sum_{i=1}^N (R_i-\bar{R}) = 0$, the above display gives
\begin{equation}\label{apx2}
    \left|\sum_{i=1}^N \sum_{r=1}^q m_{i,r}(\bm x) x_{i,r}- \sum_{r=1}^q \overline{m}_r(\bm x) \sum_{i=1}^N  x_{i,r}\right| \le N\sqrt{\delta}.
\end{equation}
Next, define the following function:
\begin{equation}\label{frdef56}
  f_r(t_1,\ldots,t_q) := \frac{\exp\{\beta t_r + B_r \}}{\sum_{s=1}^q \exp\{\beta t_s + B_s \}}~. 
\end{equation}
Recalling that: 
$$\theta_{i,r}(\bm x)= \frac{\exp\left\{\beta m_{i,r}(\bm x) +  B_{r} \right\}}{\sum_{s=1}^{q} \exp\left\{\beta m_{i,s}(\bm x) + B_{s} \right\}}=f_r(m_{i,1}(\bm x),\cdots, m_{i,q}(\bm x))\quad (\text{see \eqref{defcondprob881}})$$
and noting that the first order partial derivatives of $f_r$ are bounded by $\beta$, we have the following from the mean-value theorem:
\begin{eqnarray*}
&& \Big|f_r(m_{i,1}(\bm x),\ldots,m_{i,q}(\bm x))-f_r(n_{i,1}(\bm x),\ldots,n_{i,q}(\bm x))\Big| \lesssim \sum_{u=1}^q |m_{i,u}(\bm x) - n_{i,u}(\bm x)|
\end{eqnarray*}
where
$$n_{i,r}(\bm x) := \overline{m}_r(\bm x) + \frac{1}{q}(R_i-\bar{R}).$$
This implies that for $\bm x \in E_N(\delta)$,
\begin{eqnarray}\label{intmd}
\notag\sum_{i=1}^N\Big|\theta_{i,r}(\bm x) -f_r(n_{i,1}(\bm x),\ldots,n_{i,q}(\bm x))\Big|
&\lesssim&\sum_{i=1}^N\sum_{u=1}^q|m_{i,u}(\bm x)-n_{i,u}(\bm x)|\\
\notag&\le& \sqrt{qN}\sqrt{\sum_{i=1}^N\sum_{u=1}^q (m_{i,u}(\bm x) - n_{i,u}(\bm x))^2}\\& \le & N\sqrt{\delta}.
\end{eqnarray}
Invoking \eqref{intmd} together with the observation that
$$f_r(n_{i,1}(\bm x),\ldots,n_{i,q}(\bm x)) = \frac{\exp\{\beta \overline{m}_r(\bm x) + B_r\}}{\sum_{s=1}^q \exp\{\beta \overline{m}_s(\bm x) + B_s\} }$$ implies that for $\bm x \in E_N(\delta)$ we have:
\begin{equation}\label{apx33}
    \left|\sum_{i=1}^N \theta_{i,r}(\bm x) - \frac{N\exp\{\beta \overline{m}_r(\bm x) + B_r\}}{\sum_{s=1}^q \exp\{\beta \overline{m}_s(\bm x) + B_s\} }\right|  \lesssim N\sqrt{\delta}
\end{equation}
Again using \eqref{intmd} and the fact that $R_i \le \gamma$ \textcolor{black}{(see \eqref{as1})}, for $\bm x\in E_N(\delta)$ we have:
$$ \left|\sum_{i=1}^N R_i \left[\theta_{i,r}(\bm x) - \frac{\exp\{\beta \overline{m}_r(\bm x) + B_r\}}{\sum_{s=1}^q \exp\{\beta \overline{m}_s(\bm x) + B_s\} }\right]\right|  \lesssim N\sqrt{\delta},\quad\text{i.e.}$$

\begin{equation}\label{apx44}
\left|\sum_{i=1}^N \sum_{j=1}^N a_{ij}\theta_{i,r}(\bm x) - N \bar{R} \frac{\exp\{\beta \overline{m}_r(\bm x) + B_r\}}{\sum_{s=1}^q \exp\{\beta \overline{m}_s(\bm x) + B_s\} }\right| \lesssim N\sqrt{\delta}.
\end{equation}
Next, define the sets
\begin{align*}
    C_{N}(\delta) :=& \left\{\bm x\in [q]^N: \max_{r\in [q]} \left|\sum_{i=1}^N (x_{i,r} - \theta_{i,r}(\bm x))\right| \le N\delta\right\},\\ D_N(\delta) := &\left\{\bm x\in [q]^N: \max_{r\in [q]} \left|\sum_{i=1}^N \left(m_{i,r}(\bm x) - \sum_{j=1}^N a_{ij}\theta_{i,r}(\bm x)\right)\right| \le N\delta\right\}.
    \end{align*}



 By the triangle inequality, we have: 
{
\begin{eqnarray}\label{apx666}
&&\left|\bar{R}\sum_{i=1}^N x_{i,r} - \sum_{i=1}^N m_{i,r}(\bm x)\right|\nonumber\\&\le& \left|\bar{R}\sum_{i=1}^N x_{i,r} - \bar{R}\sum_{i=1}^N \theta_{i,r}(\bm x)\right| + \left|\bar{R}\sum_{i=1}^N \theta_{i,r}(\bm x) - \frac{N\bar{R}\exp\{\beta \overline{m}_r(\bm x) + B_r\}}{\sum_{s=1}^q \exp\{\beta \overline{m}_s(\bm x) + B_s\} }\right|\nonumber\\&+& \left|\frac{N\bar{R}\exp\{\beta \overline{m}_r(\bm x) + B_r\}}{\sum_{s=1}^q \exp\{\beta \overline{m}_s(\bm x) + B_s\} } - \sum_{i=1}^N \sum_{j=1}^N a_{ij}\theta_{i,r}(\bm x)\right| + \left|\sum_{i=1}^N \sum_{j=1}^N a_{ij}\theta_{i,r}(\bm x) - \sum_{i=1}^N m_{i,r}(\bm x) \right|.\nonumber\\
\end{eqnarray}}
Suppose now, that $\bm x \in F_N(\delta) := C_N(\delta)\bigcap D_N(\delta) \bigcap E_N(\delta)$ for some $\delta \in (0,1)$. Since $\bm x\in C_N(\delta)$, we have: $$ \left|\bar{R}\sum_{i=1}^N x_{i,r} - \bar{R}\sum_{i=1}^N \theta_{i,r}(\bm x)\right| \le N\delta \gamma \lesssim N\sqrt{\delta}.$$ Next, in view of \eqref{apx33} (since $\bm x \in E_N(\delta)$), we have:
$$\left|\bar{R}\sum_{i=1}^N \theta_{i,r}(\bm x) - \frac{N\bar{R}\exp\{\beta \overline{m}_r(\bm x) + B_r\}}{\sum_{s=1}^q \exp\{\beta \overline{m}_s(\bm x) + B_s\} }\right| \lesssim N\sqrt{\delta} \gamma \lesssim N\sqrt{\delta}.$$ 
By \eqref{apx44} (since $\bm x \in E_N(\delta)$), we have:
$$\left|\frac{N\bar{R}\exp\{\beta \overline{m}_r(\bm x) + B_r\}}{\sum_{s=1}^q \exp\{\beta \overline{m}_s(\bm x) + B_s\} } - \sum_{i=1}^N \sum_{j=1}^N a_{ij}\theta_{i,r}(\bm x)\right| \lesssim N\sqrt{\delta}.$$
Finally, since $\bm x\in D_N(\delta)$, we have:
$$\left|\sum_{i=1}^N \sum_{j=1}^N a_{ij}\theta_{i,r}(\bm x) - \sum_{i=1}^N m_{i,r}(\bm x) \right| \le N\delta \le N\sqrt{\delta}.$$
Plugging these inequalities in \eqref{apx666}, we have:
\begin{equation}\label{34postrb}
    \left|\bar{R}\sum_{i=1}^N x_{i,r} - \sum_{i=1}^N m_{i,r}(\bm x)\right| \lesssim 4N\sqrt{\delta}.
\end{equation}
Denoting $\bar{x}_r := N^{-1}\sum_{i=1}^N x_{i,r}$, for all $\bm x\in [q]^N$ we have:
\begin{eqnarray*}
    &&\left|\sum_{i=1}^N \sum_{r=1}^q x_{i,r}m_{i,r}(\bm x) - N \bar{R}\sum_{r=1}^q\bar{x}_r^2\right|\nonumber\\&\le& \left|\sum_{i=1}^N \sum_{r=1}^q x_{i,r}m_{i,r}(\bm x) - N \sum_{r=1}^q\overline{m}_r(\bm x)\bar{x}_r\right| + \left| N \sum_{r=1}^q\bar{x}_r\Big(\overline{m}_r(\bm x) -  \bar{R}\bar{x}_r\Big)\right|\nonumber\\&\lesssim& N\sqrt{\delta}~,
\end{eqnarray*}
where we use \eqref{apx2} and \eqref{34postrb} respectively, along with the trivial bound $\bar{x}_r\le 1$. With $K$ denoting the implied constant in the above display, we get
\begin{eqnarray}\label{eq:f}
\notag\p_{\beta,\bm B}(\bm X \in F_N(\delta)) &=& \frac{1}{Z_N(\beta,\bm B)} \sum_{\bm x\in F_N(\delta)} \exp\left(\frac{\beta}{2} \sum_{i=1}^N\sum_{r=1}^q m_{i,r}(\bm x) x_{i,r} + \sum_{i=1}^N\sum_{r=1}^{q} B_r x_{i,r}\right)\\&\le& \frac{e^{K N\sqrt{\delta}}}{Z_N(\beta,\bm B)} \sum_{\bm x\in F_N(\delta)} \exp\left(\frac{N\beta\bar{R}}{2}\sum_{r=1}^q \bar{x}_r^2 + N\sum_{r=1}^{q} B_r\bar{x}_r\right)\\\notag&=& e^{K N\sqrt{\delta}}~\frac{Z_N^{\mathrm{CW}}(\beta\bar{R},\bm B)}{Z_N(\beta,\bm B)} ~\p_{\beta\bar{R},\bm B}^{\mathrm{CW}}(\bm X \in F_N(\delta))\\\notag&\le& e^{K N\sqrt{\delta}}~\frac{Z_N^{\mathrm{CW}}(\beta\bar{R},\bm B)}{Z_N(\beta,\bm B)} ~\p_{\beta\bar{R},\bm B}^{\mathrm{CW}}(\bm X \in E_N(\delta))
\end{eqnarray}
where for $(\beta,\bm B) \in (0,\infty)\times \mathbb{R}^{q-1}$, we \textcolor{black}{define} the Curie-Weiss Potts model with parameter $(\beta,\bm B)$ as:
\begin{equation}\label{cwpotts1}
    \p_{\beta,\bm B}^{\mathrm{CW}}(\bm x) = \frac{1}{Z_N^{\mathrm{CW}}(\beta,\bm B)} \exp\left(\frac{N\beta}{2}\sum_{r=1}^q \bar{x}_r^2 + N\sum_{r=1}^{q} B_r\bar{x}_r\right)~.
\end{equation}

Now, it follows from the Gibbs variational principle/mean field lower bound (see Equation (1.8) in \cite{mukherjeebasak}) that:
\begin{eqnarray*}
\log Z_N(\beta,\bm B) &\ge& \sup_{\bm t \in \mathcal{P}([q])}\frac{\beta}{2} \sum_{i,j}a_{ij} \sum_{r=1}^q t_r^2 - N\sum_{r=1}^q t_r \log t_r + N\sum_{r=1}^{q} B_r t_r  \\ &=& \sup_{\bm t \in \mathcal{P}([q])}\frac{N\beta}{2} \bar{R} \sum_{r=1}^q t_r^2 - N\sum_{r=1}^q t_r \log t_r + N\sum_{r=1}^{q} B_r t_r~. 
\end{eqnarray*}
Also, it follows from equation (2.3) in \cite{mukherjeebasak} that:
$$\log Z_N^{\mathrm{CW}}(\beta\bar{R},\bm B) = \sup_{\bm t\in \mathcal{P}([q])} \left[ \frac{N\beta}{2}\bar{R} \sum_{r=1}^q t_r^2 - N\sum_{r=1}^q t_r \log t_r + N\sum_{r=1}^{q} B_rt_r\right] + o(N)~.$$
Combining the above two displays gives
$$\frac{Z_N^{\mathrm{CW}}(\beta\bar{R},\bm B)}{Z_N(\beta,\bm B)} \le e^{o(N)},$$
which along with \eqref{eq:f} gives
\begin{equation}\label{cwcompr}
\p_{\beta,\bm B}(\bm X \in F_N(\delta)) \le e^{K N(\sqrt{\delta} + o(1))}~\p_{\beta\bar{R},\bm B}^{\mathrm{CW}}(\bm X \in E_N(\delta)).
\end{equation}
The following lemma bounds $\p_{\beta\bar{R},\bm B}^{\mathrm{CW}}(\bm X \in E_N(\delta))$.

\begin{lem}\label{cwresult7}
Suppose the interaction matrix $\bm A_N$ satisfies the conditions \eqref{as1}, \eqref{as2} and \eqref{bddegr}. There exist constants $A,B>0$ depending only on $\beta,B,\gamma, q$, such that:
    $$\p_{\beta\bar{R},\bm B}^{\mathrm{CW}}(\bm X \in E_N(A)) \le e^{-BN}.$$
\end{lem}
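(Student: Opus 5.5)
The plan is to exploit the exchangeability of the Curie--Weiss Potts measure: reduce to a uniformly random coloring with prescribed color counts, then to an i.i.d.\ product measure, and finally to a lower-tail bound for a quadratic form via Hanson--Wright. Throughout, write $\bm y_r := (x_{1,r},\ldots,x_{N,r})^\top$, so that $(m_{1,r}(\bm x),\ldots,m_{N,r}(\bm x))^\top = \bm A_N \bm y_r$. The starting point is the representation \eqref{TNderv}. For each fixed $r$ the vector $\big(\overline{m}_r(\bm x) + \frac1q(R_i-\bar R)\big)_{i}$ lies in the two-dimensional space $V := \mathrm{span}\{{\bf 1}, \bm R\}$, where $\bm R = \bm A_N{\bf 1}$. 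Hence, with $P$ the orthogonal projection onto $V^\perp$,
\[
T_N(\bm x) \;\ge\; \frac{q}{N}\,\big\|P \bm A_N \bm y_r\big\|_2^2 \quad\text{for every } r\in[q].
\]
Since $P\bm A_N{\bf 1} = P\bm R = 0$, we have $P\bm A_N\bm y_r = P\bm A_N(\bm y_r - c{\bf 1})$ for any constant $c$. So it suffices to show that $\|P\bm A_N(\bm y_r - p_r{\bf 1})\|_2^2 \ge cN$ with probability $1-e^{-BN}$ under $\p^{\mathrm{CW}}_{\beta\bar R,\bm B}$, for one suitable color $r$ and a centering constant $p_r$.

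\textbf{Step 1 (reduction to fixed counts, then to i.i.d.).} Under $\p^{\mathrm{CW}}_{\beta\bar R,\bm B}$ the law depends on $\bm x$ only through $\bar{\bm x} = (\bar x_1,\ldots,\bar x_q)$. Conditionally on $\bar{\bm x} = \bm k/N$, the configuration is therefore uniform over colorings with counts $\bm k$. There are only polynomially many count vectors, so it is enough to bound, uniformly over $\bm k$ in a suitable set, the uniform-coloring probability of the event $\{\|P\bm A_N(\bm y_r - p_r{\bf 1})\|^2 < cN\}$. First I discard count vectors with some $\bar x_r < c_0$. By the standard large-deviation computation for the Curie--Weiss Potts model (equation (2.3) in \cite{mukherjeebasak}), such vectors carry probability $e^{-B'N}$: the maximizers of $H_{\beta\bar R,\bm B}$ lie in the interior of $\mathcal P([q])$, because $-t\log t$ has infinite slope at $0$, and $\beta\bar R\le\beta\gamma$ together with $\bm B$ keeps them uniformly away from the boundary. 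For the remaining $\bm k$, set $\bm p = \bm k/N$. Let $\mu$ be the uniform law on colorings with counts $\bm k$ and $\bm p^{N}$ the i.i.d.\ product measure. Then for every event $E$,
\[
\mu(E) \le \frac{\bm p^N(E)}{\bm p^N(\text{counts}=\bm k)} \le \mathrm{poly}(N)\,\bm p^N(E),
\]
so it suffices to prove an $e^{-BN}$ bound under $\bm p^N$ with $\min_r p_r\ge c_0$.

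\textbf{Step 2 (lower tail under the product measure).} Under $\bm p^N$ the vector $\bm z := \bm y_1 - p_1{\bf 1}$ has independent, centered, bounded entries with variance $\sigma^2 = p_1(1-p_1)\ge c_0/2$. Consider the quadratic form $\bm z^\top M\bm z$ with $M := \bm A_N P\bm A_N$ (using $P^2=P$). Its mean is $\sigma^2\,\mathrm{tr}(\bm A_N P\bm A_N)$. Writing $Q := I-P$, which has rank at most $2$, we get
\[
\mathrm{tr}(\bm A_N P\bm A_N) = \|\bm A_N\|_F^2 - \|\bm A_N Q\|_F^2 \ge \sum_{i,j} a_{ij}^2 - 2\|\bm A_N\|_{op}^2 \ge c_1N - 2\gamma^2 .
\]
Here I use \eqref{bddegr} for the first term. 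For the second, $\bm A_N$ is symmetric, so $\|\bm A_N\|_{op}\le\|\bm A_N\|_1\le\gamma$ by \eqref{as1}. The mean is therefore at least $c_2N$. Moreover $\|M\|_{op}\le\gamma^2$ and $\|M\|_F^2\le N\gamma^4$. The Hanson--Wright inequality with deviation $t = c_2N/2$ then gives
\[
\p\Big(\bm z^\top M\bm z < \tfrac{c_2}{2}N\Big) \le 2\exp\Big(-c\min\Big\{\tfrac{t^2}{\|M\|_F^2},\tfrac{t}{\|M\|_{op}}\Big\}\Big) = e^{-c_3N}.
\]
On the complement, $T_N(\bm x)\ge qc_2/2$. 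Taking $A<qc_2/2$ and combining with Step 1 (the polynomial factors and the union over $\bm k$ are absorbed by the exponential) yields $\p^{\mathrm{CW}}_{\beta\bar R,\bm B}(\bm X\in E_N(A))\le e^{-BN}$. All constants depend only on $\beta,\bm B,\gamma,q$ and the $\liminf$ in \eqref{bddegr}.

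\textbf{Expected obstacle.} The delicate point is Step 1: justifying that the Curie--Weiss measure puts exponentially small mass on proportion vectors near the boundary of $\mathcal P([q])$, uniformly in $\bar R\in[\liminf \bar R,\gamma]$. I would handle this by a direct Stirling/entropy bound on the count distribution. That bound shows $\p(\bar x_r<c_0)\le \mathrm{poly}(N)\,e^{N(\sup_{t_r\le c_0}H - \sup H)}$, and the exponent is strictly negative for small $c_0$, uniformly over the compact range of $\beta\bar R$. Applying Hanson--Wright to the projected matrix is routine once the trace lower bound is in hand. The rank-two correction is harmless precisely because $\|\bm A_N\|_{op}$ is bounded.
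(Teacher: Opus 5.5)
Your argument is correct, and it takes a genuinely different route from the paper's proof.

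The paper never conditions on the empirical type. It adds the Gaussian auxiliary vector $\bm Z$, with $Z_r\sim N(\bar X_r,(\beta\bar R N)^{-1})$ given $\bm X$, so that by Lemma~\ref{condindp} the spins are conditionally i.i.d.\ with law $f(\bm Z)$. It then bounds $\e(S_{N,r}\mid\bm Z)\ge \mathrm{Var}(X_{1,r}\mid\bm Z)\big(\sum_{i,j}a_{ij}^2-\frac1N\sum_i R_i^2\big)$. It does this by writing the random centering $\overline m_r(\bm X)=\frac1N\sum_j R_jX_{j,r}$ as part of a single linear form, rather than projecting it away. Next it notes that $\bm Z\in[-2,2]^q$ off an event of probability $O(e^{-cN})$, simply because $\bar X_r\in[0,1]$, so the conditional variance is bounded below there. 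It finishes with McDiarmid, using the bounded difference $8\gamma^2$.

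What the Gaussian trick buys is that non-degeneracy of the conditional law comes for free. No information about where the maximizers of $H_{\beta\bar R,\bm B}$ sit is required. Your type-class reduction, by contrast, forces an extra large-deviation step to exclude types near $\partial\mathcal P([q])$. Your sketch of that step is sound: it uses the fixed-point bound $m_r\ge q^{-1}e^{-\beta\gamma-2\|\bm B\|_\infty}$, the type-class counting bounds, and continuity in $\beta\bar R$ over a compact range.

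In exchange, your route works with honestly independent coordinates after a clean, polynomial-cost method-of-types transfer. The projection onto $\mathrm{span}\{\mathbf 1,\bm R\}^\perp$ also removes both centering terms at once, at a rank-two cost that \eqref{as1} makes harmless.

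Hanson--Wright is more than you need here. Under $\bm p^N$ you could apply McDiarmid to $S_{N,1}$ directly, with the paper's variance identity supplying the $\Omega(N)$ lower bound on its mean.
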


Lemma \ref{cwresult7} is proved in Appendix \ref{prcw7}. Using Lemma \ref{cwresult7} together with \eqref{cwcompr}, we know that for every $\delta \in (0,A)$,
$$\p_{\beta,\bm B}(\bm X \in F_N(\delta)) \le \exp\{N(K\sqrt{\delta} -B + o(1))\}.$$
We can now choose $\delta = \min\left\{\frac{A}{2}, \frac{B^2}{4K^2}\right\}$ small enough, such that 
\begin{equation}\label{abvdisp98}
    \p_{\beta,\bm B}(\bm X \in F_N(\delta)) \le e^{-\frac{NB}{2}+o(1  )}=o(1).
\end{equation}
Recalling that $F_N(\delta)=C_N(\delta)\cap D_N(\delta)\cap E_N(\delta)$, we get
$$\p_{\beta,\bm B}(\bm X \in E_N(\delta)) \le \p_{\beta,\bm B} (\bm X \in F_N(\delta)) + \p\left(\bm X \notin C_N(\delta)\bigcap D_N(\delta)\right).$$
By \eqref{abvdisp98}, the first term in the RHS of the above inequality is $o(1)$. It also follows directly from Lemma \ref{bp12} (on taking $b_{itrs} := \mathbbm{1}_{t=r}$ and $g\equiv 1$), that $\p_{\beta,\bm B}(\bm X\notin C_N(\delta)) = o(1)$. Finally, note that:
$$\sum_{i=1}^N \left(m_{i,r}(\bm x) - \sum_{j=1}^N a_{ij} \theta_{i,r}(\bm x)\right) = \sum_{j=1}^N R_j x_{j,r} - \sum_{i=1}^N R_i \theta_{i,r}(\bm x) = \sum_{i=1}^N R_i (x_{i,r} - \theta_{i,r}(\bm x)).$$ It now follows from Lemma \ref{bp12} (on taking $b_{itrs} := R_i\mathbbm{1}_{t=r}$ and $g \equiv 1$), that $\p_{\beta,\bm B}(\bm X\notin D_N(\delta)) = o(1)$.  Hence, $\p_{\beta,\bm B}(\bm X\notin C_N(\delta) \bigcap D_N(\delta)) = o(1)$. We thus conclude that
$\p_{\beta,\bm B}(\bm X \in E_N(\delta)) = o(1)$, thereby completing the proof of Theorem \ref{bdgr}. \qed

\section{Proof of Theorem \ref{irrgr}}\label{sec:irrgrproof}
Since we have already established $\sqrt{N}$-consistency of the MPL estimator $(\hat{\beta}_N,\hat{\bm B}_N)$ in the non mean-field setup of Subsection \ref{bdegr} (Theorem \ref{bdgr}), we will assume that condition \eqref{bddegr} does not hold, i.e. we will assume the following \textit{mean-field} condition:
\begin{equation}\label{unbddegr}
    \lim_{N\rightarrow \infty} \frac{1}{N}\sum_{1\le i,j\le N} a_{ij}^2 = 0.
\end{equation} 
Note that if the coupling matrix is the scaled adjacency of a graph \eqref{adjacency_coupling_defn}, then the mean field condition says that the average degree of the graph goes to $\infty$. Once again, in view of Theorem \ref{thm:theorem1} (b), we just need to show that $T_N(\bm X)^{-1} = O_\p(1)$. The sequence of measures $\mu_N := N^{-1}\sum_{i=1}^N \delta _{R_i}$ are supported on the compact set $[0,\gamma]$, and so by Prokhorov's theorem and possibly passing to subsequences, we can assume that $\mu_N\xrightarrow{w} \mu$ for some probability measure $\mu$ on $[0,\gamma]$. The dominated convergence theorem now implies that:
\begin{equation}\label{meascv}
    \lim_{N\rightarrow\infty} \frac{1}{N} \sum_{i=1}^N (R_i-\bar{R})^2 = \int_{[0,\gamma]} (\theta - \e_\mu(\theta))^2 d \mu(\theta).
\end{equation}
Using \eqref{TNderv} we have
\begin{eqnarray}\label{eq:tn2}
\notag T_N(\bm X) &=& \frac{q}{N}\sum_{i=1}^N \sum_{r=1}^q\left(m_{i,r}(\bm X) - \overline{m}_r(\bm X) - \frac{1}{q}(R_i-\bar{R})\right)^2\\ &=& \frac{q}{2N^2}\sum_{1\le i,j\le N} \sum_{r=1}^q\left(m_{i,r}(\bm X) - m_{j,r}(\bm X) - \frac{1}{q}(R_i-R_j)\right)^2.
\end{eqnarray}
Setting $u_{i,r}(\bm X) := m_{i,r}(\bm X)- R_i/q$ and recalling the definition of $f_r$ and $\theta_{i,r}({\bm X})$  (see \eqref{frdef56} and \eqref{defcondprob881}),
$$f_r(t_1,\ldots,t_q) = \frac{\exp\{\beta t_r + B_r \}}{\sum_{s=1}^q \exp\{\beta t_s + B_s \}}~,$$ we have:
\begin{eqnarray*}
&&|\theta_{i,r}(\bm X) - \theta_{j,r}(\bm X)|\\&=& |f_r(m_{i,1}(\bm X),\ldots, m_{i,q}(\bm X)) - f_r(m_{j,1}(\bm X),\ldots, m_{j,q}(\bm X))|\\&=& |f_r(u_{i,1}(\bm X),\ldots,u_{i,q}(\bm X)) - f_r(u_{j,1}(\bm X),\ldots,u_{j,q}(\bm X))|\\&\le& \beta \sum_{s=1}^q |u_{i,s}(\bm X) - u_{j,s}(\bm X)| 
\end{eqnarray*}
where the last step follows from mean-value theorem, and the fact that all the partial derivatives of $f_r$ are bounded by $\beta$. Hence, we have:
\begin{eqnarray*}
\frac{1}{N}\sum_{i=1}^N \sum_{r=1}^q (\theta_{i,r}(\bm X) - \bar{\theta}_r(\bm X))^2 &=&\frac{1}{2N^2}\sum_{1\le i,j\le N} \sum_{r=1}^q (\theta_{i,r}(\bm X) - \theta_{j,r}(\bm X))^2\\&\le& \frac{q^2 \beta^2}{2N^2} \sum_{1\le i,j\le N}  \sum_{s=1}^q (u_{i,s}(\bm X) - u_{j,s}(\bm X))^2\\&=& q\beta^2 T_N(\bm X),
\end{eqnarray*}
where the last equality uses \eqref{eq:tn2}.
Hence, for showing that $T_N(\bm X)^{-1} = O_\p(1)$, it is enough to show that:
\begin{equation}\label{betameancd}
\left[\sum_{i=1}^N \sum_{r=1}^q (\theta_{i,r}(\bm X) - \bar{\theta}_r(\bm X))^2\right]^{-1} = O_\p(N^{-1})~.
\end{equation}
Towards this, let $\mathcal{S}_{N,q}$ denote the set of all $\bm y := ((y_{i,r}))_{i\in [N], r\in [q]} \in [0,1]^{Nq}$, such that $\sum_{r=1}^q y_{i,r} =1$ for all $i\in [N]$. Define functions $h_N: \mathcal{S}_{N,q}\to \mathbb{R}$ and $I_N: \mathcal{S}_{N,q}\to \mathbb{R}$ as:
\begin{equation}\label{hNdef96}
  h_N(\bm y) := \frac{\beta}{2} \sum_{1\le i,j\le N} \sum_{r=1}^q a_{ij} y_{i,r}y_{j,r} + \sum_{i=1}^N \sum_{r=1}^{q} B_r y_{i,r}\quad\text{and}\quad I_N(\bm y) := \sum_{i=1}^N \sum_{r=1}^q y_{i,r}\log y_{i,r}~.  
\end{equation}
 Also, define
 \begin{equation}\label{psiNdef96}
    \psi_N(\bm y) := h_N(\bm y) - I_N(\bm y) .
 \end{equation}
 Then, we have:
\begin{equation}\label{partder4460}
    \frac{\partial \psi_N(\bm y)}{\partial y_{i,r}} = \beta \sum_{j=1}^N a_{ij} y_{j,r} + B_r - 1-\log y_{i,r}~.
\end{equation}
Denoting $\nabla_{\cdot r} (\bm y) := ((\partial \psi_N/\partial y_{i,r}))_{i\in [N]}$, we have:
\begin{equation}\label{2ndstep556}
    \textcolor{black}{\nabla_{\cdot r} }(\bm y) = \beta \bm A_N\bm y_{\cdot r} + (B_r -1)\boldsymbol{1}_N - \log \bm y_{\cdot r}
\end{equation}
 where $\bm y_{\cdot r} := ((y_{i,r}))_{i\in [N]}$. Also, define $\overline{\nabla}(\bm y) \in \mathbb{R}^N$ and $\widetilde{\nabla}(\bm y) \in \mathbb{R}^{Nq}$ as:
 \begin{equation}\label{deftildedelt}
       \overline{\nabla}(\bm y) := \frac{1}{q}\sum_{r=1}^q \textcolor{black}{\nabla_{\cdot r} }(\bm y) \quad\text{and}\quad \widetilde{\nabla} (\bm y) := (\onb(\bm y),\ldots,\onb(\bm y)). 
 \end{equation}
    Note that $\widetilde{\nabla}(\bm y)$ is obtained from $\nabla \psi_N(\bm y)$ by replacing each row $(\partial \psi_N(\bm y)/\partial y_{i,r})_{r\in [q]}$ by the constant vector of its average. Therefore, whenever some vector $\bm y \in \mathcal{S}_{N,q}$ satisfies the conditions:
    \begin{equation}\label{3rdineq776}
       \sum_{i=1}^N \sum_{r=1}^q (y_{i,r}-\bar{y}_r)^2 \le N\delta_N\quad\text{and}\quad \min_{i\in [N], r\in [q]} y_{i,r} \ge \alpha
    \end{equation}
 for some non-negative sequence $\delta_N \rightarrow 0$ and $\alpha> 0$, then with $\widetilde{\bm y} \in \mathcal{S}_{N,q}$ defined as $\widetilde{y}_{i,r} := \bar{y}_r$ for all $i,r$, we have: 

\begin{eqnarray*}
    &&\|\textcolor{black}{\nabla_{\cdot r} }(\bm y)-\onb(\bm y)\|\\ 
    &\ge& \|\nabla_{\cdot r} (\widetilde{\bm y}) - \onb(\widetilde{\bm y})\| - \|\textcolor{black}{\nabla_{\cdot r} }(\bm y) - \nabla_{\cdot r} (\widetilde{\bm y})\| - \|\onb(\bm y) - \onb(\widetilde{\bm y})\|\\
    &\ge& \|\nabla_{\cdot r} (\widetilde{\bm y}) - \onb(\widetilde{\bm y})\| - \|\bm y_{\cdot r} - \widetilde{\bm y}_{\cdot r}\|\left(\beta \|\bm A_N\|_2 +\frac{1}{\alpha}\right)-\frac{1}{q}\sum_{s=1}^q \|\bm y_{\cdot s} - \widetilde{\bm y}_{\cdot s}\|\left(\beta \|\bm A_N\|_2 +\frac{1}{\alpha}\right)\\&\ge& \|\nabla_{\cdot r} (\widetilde{\bm y})- \onb(\widetilde{\bm y})\| - 2\sqrt{N\delta_N} (\beta\gamma + \alpha^{-1})\\&=& \|\nabla_{\cdot r} (\widetilde{\bm y})- \onb(\widetilde{\bm y})\| - o\left(\sqrt{N}\right).
\end{eqnarray*}
In the above display, the second inequality uses the expression of $\psi_N({\bm y})$ in \eqref{2ndstep556}, and the third inequality uses \eqref{3rdineq776}.
To bound the RHS above, again use \eqref{2ndstep556} to note that
\begin{eqnarray*}
    &&\|\nabla_{\cdot r} (\widetilde{\bm y})- \onb(\widetilde{\bm y})\|^2\\ &=& \sum_{i=1}^N \left[\beta \bar{y}_r R_i + B_r-1-\log \bar{y}_r - \frac{1}{q}\sum_{s=1}^q \left(\beta \bar{y}_s R_i + B_s-1-\log \bar{y}_s\right)\right]^2\\&\ge& \inf_{\bm t\in [\alpha,1]^q} \sum_{i=1}^N \left(\beta (t_r-\bar{t}) R_i + B_r-\frac{1}{q}\sum_{s=1}^q B_s-(\log t_r - q^{-1}\sum_{s=1}^q \log t_s) \right)^2\\&=& N \inf_{\bm t\in [\alpha,1]^q} \int_{0}^\gamma \left(\beta (t_r-\bar{t}) \theta + B_r-\frac{1}{q}\sum_{s=1}^q B_s - (\log t_r - q^{-1}\sum_{s=1}^q \log t_s)\right)^2~d\mu(\theta)\\ &+& o(N),
\end{eqnarray*} 
where the last inequality uses the weak convergence of $\mu_N$ to $\mu$.
Using the above two displays, and noting that $\theta_{i,r}({\bm X})\ge \alpha :=  q^{-1}\exp\left\{-\beta \gamma -2\|\bm B\|_\infty\right\}$, on the event $\sum_{i=1}^N \sum_{r=1}^q (\theta_{i,r}(\bm X) - \bar{\theta}_r(\bm X))^2 \le N\delta_N$ we have:
\begin{eqnarray}\label{fstpp3}
    &&N^{-1/2}\|\nabla_{\cdot r} (\boldsymbol{\theta}(\bm X)) - \onb (\boldsymbol{\theta}(\bm X))\|+o(1)\nonumber\\ &\ge& \sqrt{\inf_{\bm t\in [\alpha,1]^q} \int_{0}^\gamma \left(\beta (t_r-\bar{t}) \theta + B_r-\frac{1}{q}\sum_{s=1}^q B_s - (\log t_r - q^{-1}\sum_{s=1}^q \log t_s)\right)^2~d\mu(\theta)},\nonumber\\
\end{eqnarray}
 where $\boldsymbol{\theta}(\bm X) := ((\theta_{i,r}(\bm X)))_{i\in [N],r\in [q]}$. We now claim that the RHS of \eqref{fstpp3} is strictly positive for at least one $r\in [q]$. Given the claim,
on the event $\sum_{i=1}^N \sum_{r=1}^q (\theta_{i,r}(\bm X) - \bar{\theta}_r(\bm X))^2 \le N\delta_N$ we have the existence of a constant $c>0$, free of $N$, such that
\begin{equation*}\label{controm4}
    \max_{r\in [q]}\|\nabla_{\cdot r} (\boldsymbol{\theta}(\bm X))- \onb(\boldsymbol{\theta}(\bm X))\| \ge (c-o(1))\sqrt{N}.
\end{equation*}
Consequently, for any non-negative sequence $\delta_N \rightarrow 0$ we have
\begin{eqnarray*}
&&\p\left(\sum_{i=1}^N \sum_{r=1}^q (\theta_{i,r}(\bm X) - \bar{\theta}_r(\bm X))^2 \le N\delta_N\right)\\ &\le&  \p\left(\max_{r\in [q]}\|\nabla_{\cdot r} (\boldsymbol{\theta}(\bm X))- \onb (\boldsymbol{\theta}(\bm X))\| \ge  (c-o(1))\sqrt{N}\right)\to 0,
\end{eqnarray*}
 where the last limit uses Lemma~\ref{epnt376} (b). This
 establishes \eqref{betameancd} and completes the proof of Theorem \ref{irrgr}. 
 \\

 It thus remains to prove the claim that the RHS of \eqref{fstpp3} is strictly positive. To this effect,
assume by contradiction that the RHS  is $0$ for all $r\in [q]$.
By the dominated convergence theorem, the map
\begin{equation}\label{measureintg68}
    \bm t\mapsto \int_{0}^\gamma \left(\beta (t_r-\bar{t}) \theta + B_r-\frac{1}{q}\sum_{s=1}^q B_s - (\log t_r - q^{-1}\sum_{s=1}^q \log t_s)\right)^2~d\mu(\theta)
\end{equation}
is continuous on the compact set $[p,1]^q$, and hence, attains its infimum over $[p,1]^q$ at some point $\bm t^{(r)}\in [p,1]^q$. 
First suppose that $t_r^{(r)}\ne \bar{t}^{(r)}$ for some $r\in [q]$, whence we must have the following under $\mu$:
$$\theta \stackrel{a.s.}{=} \beta^{-1}(t_r^{(r)}-\bar{t}^{(r)})^{-1} \left[B_r-\frac{1}{q}\sum_{r=1}^q B_r - (\log t_r^{(r)} - q^{-1}\sum_{s=1}^q \log t_s^{(r)})\right]$$
a contradiction, since $\mu$ is not a degenerate measure in view of \eqref{meascv} and condition \eqref{irregap}.

This forces $t_r^{(r)} = \bar{t}^{(r)}$ for all $r\in [q]$. In this case, Jensen's inequality gives 
$$\log t_r^{(r)} - q^{-1}\sum_{s=1}^q \log t_s^{(r)} \ge 0$$
for all $r\in [q]$. Suppose further, that there exists $r\in [q]$ such that $$B_r - q^{-1}\sum_{s=1}^q B_s<0.$$ In this case, 
$B_r-\frac{1}{q}\sum_{s=1}^q B_s - (\log t_r^{(r)} - q^{-1}\sum_{s=1}^q \log t_s^{(r)}) < 0$
and hence, the integral \eqref{measureintg68} is strictly positive for that $r$, a contradiction.

The only case left for consideration is  when $B_r-\frac{1}{q}\sum_{s=1}^q B_s\ge 0$ for all $r\in [q]$. This however forces $B_1=\ldots=B_q = 0$, which was ruled out in the hypothesis of Theorem \ref{irrgr} (in fact, in this case, the minimum value is exactly $0$, attained at any constant vector $\bm t \in [\alpha,1]^q)$. This completes the proof of our claim, and subsequently, the theorem. \qed

\section{Proof of Theorem \ref{jinest}}\label{proof:jinest}
In this section, we prove Theorem \ref{jinest}. To begin with, note that by Lemma \ref{contg7}, the product measure $\bm m^N := \otimes_{i=1}^N \bm m$ is contiguous to the Curie-Weiss Potts measure $\p_{\beta,\bm B}^{\mathrm{CW}}$ in \eqref{cwpotts1}.  We next claim that for any $(\beta,\bm B) \in (0,\infty)\times \R^{q-1}$, the product measure $\p_{\beta,\bm B}^{\mathrm{CW}}\times \mathcal{G}(N,p)$ is contiguous to the measure $\p_{\beta,\bm B}^{\mathrm{ER}}$. Towards proving this, note that in view of Proposition 6.1 of \cite{estimation_14} it suffices to show that:
\begin{equation}\label{kldiv}
    D\left(\p_{\beta,\bm B}^{\mathrm{CW}}\times \mathcal{G}(N,p) || \p_{\beta,\bm B}^{\mathrm{ER}}\right) = O(1)
\end{equation}
where $D$ denotes the Kullback-Leibler divergence. Now, we have:
\begin{eqnarray*}
&&D\left(\p_{\beta,\bm B}^{\mathrm{CW}}\times \mathcal{G}(N,p) || \p_{\beta,\bm B}^{\mathrm{ER}}\right)\\ &=& \sum_{(\bm x, G)\in [q]^N\times \{0,1\}^{\binom{N}{2}}} (\p_{\beta,\bm B}^{\mathrm{CW}}\times \mathcal{G}(N,p))(\bm x, G)\log\frac{(\p_{\beta,\bm B}^{\mathrm{CW}}\times \mathcal{G}(N,p))(\bm x, G)}{\p_{\beta,\bm B}^{\mathrm{ER}}(\bm x,G)}\\&=& \e_{\mathcal{G}(N,p)} \left[\sum_{\bm x\in [q]^N} \p_{\beta,\bm B}^{\mathrm{CW}}(\bm x) \log \frac{\p_{\beta,\bm B}^{\mathrm{CW}}(\bm x)}{\p_{\beta,\bm B}^{\mathrm{ER}}(\bm x|G)}\right]\\&=& \e_{\mathcal{G}(N,p)} \log \left(\frac{Z_{\beta,\bm B}^{\mathrm{ER}}}{Z_{\beta,\bm B}^{\mathrm{CW}}}\right) - \sum_{\bm x \in [q]^N} \p_{\beta,\bm B}^{\mathrm{CW}}(\bm x) \left[\e_{\mathcal{G}(N,p)} \left(\widetilde{H}_{\beta,\bm B}^{\mathrm{ER}} (\bm x)\right) - \widetilde{H}_{\beta,\bm B}^{\mathrm{CW}} (\bm x)  \right]
\end{eqnarray*}
where $\widetilde{H}_{\beta,\bm B}^{\mathrm{ER}} (\bm x)$ and $\widetilde{H}_{\beta,\bm B}^{\mathrm{CW}} (\bm x)$ follow from the expression $$\frac{\beta}{2} \sum_{1\le i,j\le k} a_{ij}\mathbbm{1}_{x_i=x_j} + \sum_{i=1}^n \sum_{r=1}^{q} B_r\mathbbm{1}_{x_i=r}$$ with $a_{ij}$ replaced by \eqref{erny} and $1/N$, respectively. Note that $\e_{\mathcal{G}(N,p)} \left(\widetilde{H}_{\beta,\bm B}^{\mathrm{ER}} (\bm x)\right) = \widetilde{H}_{\beta,\bm B}^{\mathrm{CW}} (\bm x)$ and hence, we have by Jensen's inequality:
\begin{equation}\label{jenserny}
    D\left(\p_{\beta,\bm B}^{\mathrm{CW}}\times \mathcal{G}(N,p) || \p_{\beta,\bm B}^{\mathrm{ER}}\right) = \e_{\mathcal{G}(N,p)} \log \left(\frac{Z_{\beta,\bm B}^{\mathrm{ER}}}{Z_{\beta,\bm B}^{\mathrm{CW}}}\right) \le \log \left(\frac{\e_{\mathcal{G}(N,p)}(Z_{\beta,\bm B}^{\mathrm{ER}})}{Z_{\beta,\bm B}^{\mathrm{CW}}}\right).
\end{equation}
Finally, note that:
\begin{eqnarray*}
&&\e_{\mathcal{G}(N,p)}(Z_{\beta,\bm B}^{\mathrm{ER}}) \\&=& \sum_{\bm x \in [q]^N}\e_{\mathcal{G}(N,p)}\left[\exp\left(\frac{\beta}{2Np} \sum_{1\le i,j\le N} g_{ij}\mathbbm{1}_{x_i=x_j} + \sum_{i=1}^N \sum_{r=1}^{q} B_r\mathbbm{1}_{x_i=r}\right)\right]\\&=& \sum_{\bm x \in [q]^N} e^{\sum_{i=1}^N \sum_{r=1}^{q} B_r\mathbbm{1}_{x_i=r}}\prod_{1\le i<j\le N} \e_{\mathcal{G}(N,p)} \left[\exp\left(\frac{\beta}{Np}g_{ij}\mathbbm{1}_{x_i=x_j}\right)\right]\\&\le & \sum_{\bm x \in [q]^N} e^{\sum_{i=1}^N \sum_{r=1}^{q} B_r\mathbbm{1}_{x_i=r}}\prod_{1\le i<j\le N} \exp\left(\frac{\beta}{N}\mathbbm{1}_{x_i=x_j} + \frac{\beta^2}{8N^2p^2}\right)\\&\le& \sum_{\bm x \in [q]^N} e^{\beta^2/(16p^2)}\exp\left(\frac{\beta}{N}\sum_{1\le i<j\le N}\mathbbm{1}_{x_i=x_j} + \sum_{i=1}^N \sum_{r=1}^{q} B_r\mathbbm{1}_{x_i=r} \right)\\&\le& e^{\beta^2/(16p^2)}  Z_{\beta,\bm B}^{\mathrm{CW}}
\end{eqnarray*}
where in going from the second to the third line, we used the fact that for a Bernoulli random variable $Y$ with $\e Y=p$, 
$$\e e^{t(Y-p)} \le \exp\left(\frac{t^2}{8}\right),$$ which is a direct consequence of Hoeffding's lemma.
It now follows from \eqref{jenserny} that:
$$D\left(\p_{\beta,\bm B}^{\mathrm{CW}}\times \mathcal{G}(N,p) || \p_{\beta,\bm B}^{\mathrm{ER}}\right) \le \frac{\beta^2}{16p^2}$$
thereby establishing \eqref{kldiv} and completing the proof of our claim.

    Lemma \ref{contg7} coupled with our claim, establishes that the product measure $\nu:= \bm m^N \times \mathcal{G}(N,p)$ is contiguous to the measure $\p_{\beta,\bm B}^{\mathrm{ER}}$ for every $(\beta,\bm B) \in \Theta_{\bm m}$, thereby completing the proof of Theorem \ref{jinest}. \qed 

\section{Proof of Theorem \ref{partialestm}}\label{proof:partial8}

    
In this section, we prove Theorem \ref{partialestm}.

    \noindent (a) Fixing $\beta>0$, it follows from Lemma \ref{hesdet} part (b)  that $\nabla_{\bm B}^2 \ell_N (\beta,\bm B)$ is negative definite, hence ${\bm B}\mapsto \ell_N (\beta,\bm B)$ is strictly concave.
    Define:
    \begin{equation}\label{eq:a4n}
        A_{4,N} := \{\bm x \in [q]^N: ~\text{there exists}~r\in [q]~ \text{such that for all}~i \in [N], ~x_i \ne r\}.
    \end{equation}
  
   Lemma \ref{prevunp} part (b) gives that if $\bm X\in A_{4,N}^c$, then $\ell_N(\beta,\bm B)\to -\infty$ as $\|{\bm B}\|\to\infty$. Consequently, we have the existence of a  unique global maximizer at some $\bm B \in \mathbb{R}^{q-1}$, i.e. $\hat{\bm B}_N$ exists on the event $\{\bm X\in A_{4,N}^c\}$.
   
   Similarly, fixing ${\bm B}\in \R^{q-1}$, on the event $\{U_N({\bm X})\ne 0\}$, the function  $\beta\mapsto \ell_N(\beta,{\bm B})$ is stictly concave (see Lemma \ref{hesdet} part (c)). 
Define:
   \begin{equation}\label{eq:a3n}
     A_{2,N} := \{\bm x \in [q]^N:m_{i,x_i}(\bm x) = \min_{r\in [q]}m_{i,r}(\bm x)~\text{for all}~i\in [N]\} 
   \end{equation}
   \begin{equation}\label{eq:a2n}
       A_{3,N} := \{\bm x \in [q]^N: m_{i,x_i}(\bm x) = \max_{r\in [q]}m_{i,r}(\bm x)~\text{for all}~i\in [N]\}. 
   \end{equation}
   
   Once again, it follows from Lemma   \ref{prevunp} part (a)  that if $\bm X \in A_{2,N}^c \cap A_{3,N}^c$ and $U_N({\bm X})\ne 0$, then the function $\beta\mapsto\ell_N(\beta,\bm B)$ attains a unique global maximum at some $\beta \in \R$, i.e. $\hat{\beta}_N$ exists on the event $\{\bm X \in A_{2,N}^c \cap A_{3,N}^c\} \cap\{U_N(\bm X) \ne 0\}$.
   \\

   To complete the proof of part (a), noting that \eqref{unrateexact648} implies $\p(U_n({\bm X})=0)\to 0$,   it suffices to show that  $$\p(\bm X \in A_{4,N}) = o(1),\qquad \p(\bm X \in A_{2,N} \cup A_{3,N})= o(1).$$

   Towards showing that $\p(\bm X \in A_{4,N}) = o(1)$, note that by Lemma \ref{bp12} with $b_{itrs} := \mathbbm{1}_{t\ne r_0}$ and $g \equiv 1$, we have
\begin{equation}\label{cntre1}
    \sum_{i=1}^N\sum_{t \neq r_0}(\mathbbm{1}_{X_i=t}-\theta_{i,t}(\bm X))=O_{\p}(\sqrt{N}),
\end{equation}
where $\theta_{i,t}({\bm x})=\p(X_i=t|X_j=x_j,j\ne i)$ as in \eqref{defcondprob881}.
Assume $\bm X \in A_{4,N}$. Let $r_0\in [q]$ be such that $X_i \neq r_0$ for all $i$. This implies that for all $i$,  $\sum_{s \neq r_0}\mathbbm{1}_{X_i=s}=1$. Hence, we have: 
$$\sum_{s \neq r_0}(\mathbbm{1}_{X_i=s}-\theta_{i,s}(\bm X)) = 1- \sum_{s \ne r_0}\theta_{i,s}(\bm X) \ge q^{-1} \exp(-\beta \gamma-2 \|\bm B\|_\infty)~.$$ 
This says that the left side of \eqref{cntre1} is $\Omega(N)$ whenever $\bm X \in A_{4,N}$, thereby showing that $\p(\bm X\in A_{4,N}) = o(1)$. 

Next, towards showing that $\p(\bm X\in A_{3,N}) = o(1)$,
invoking Lemma \ref{bp12} applied with $b_{itrs} := \mathbbm{1}_{t=r}$, $\lambda := 0$ and $g(x)=x$, a union bound gives:
\begin{equation}\label{Opside68}
    \sum_{i=1}^N \sum_{r=1}^q \left(\mathbbm{1}_{X_i=r}-\theta_{i,r}(\bm X)\right)m_{i,r}(\bm X) = O_\p(\sqrt{N}).
\end{equation}
 Now, suppose that $\bm X \in A_{3,N}$. Then, $\sum_{r=1}^q m_{i,r}(\bm X)\mathbbm{1}_{X_i=r} = m_{i,X_i}(\bm X) = \max_r m_{i,r}(\bm X)$, and hence
\begin{eqnarray*}
    &&\sum_{i=1}^N \sum_{r=1}^q \left(\mathbbm{1}_{X_i=r}-\theta_{i,r}(\bm X)\right)m_{i,r}(\bm X)\\ &=& \sum_{i=1}^N \left(\max_r m_{i,r}(\bm X) - \sum_{r} m_{i,r}(\bm X)\theta_{i,r}\right)\\&\ge& \frac{1}{q^2(q-1)\gamma}\exp\left(-\beta\gamma -2\|\bm B\|_\infty\right) \sum_{i=1}^N \sum_{r<s} \left(m_{i,r}(\bm X) - m_{i,s}(\bm X)\right)^2\\&=& \frac{1}{q^2(q-1)\gamma}\exp\left(-\beta\gamma -2\|\bm B\|_\infty\right) NU_N(\bm X) = a_N \Omega_\p(\sqrt{N}).
\end{eqnarray*}
In the above display, the inequality on the third line invokes Lemma \ref{c7y} with the choices $$w_r=\theta_{ir}(\bm X)\ge q^{-1} \exp(-\beta \gamma -2\|{\bm B}\|_\infty)=\alpha,\quad  t_r=m_{i,r}({\bm X})\in [0,\gamma],$$
and the last equality holds
for some sequence $a_N \rightarrow \infty$, by \eqref{unrateexact648}. The above, combined with \eqref{Opside68} now gives $\p(\bm X\in A_{3,N}) = o(1)$. The proof of the fact $\p(\bm X\in A_{2,N}) = o(1)$ follows similarly, and we skip the details. 
    \vspace{0.1in}
    

    \noindent (b)~The proof will be carried out by an application of {Proposition} \ref{genZthlem} with $w_N(\bm B) = \nabla_{\bm B} \ell_N(\beta, \bm B)$.  To begin with, noting  the expression of $\nabla_{\bm B}\ell_N (\beta,\bm B)$ (see \eqref{parB}), it follows from Lemma \ref{bp12} that $\|\nabla_{\bm B}\ell_N (\beta,\bm B)\|_2 = O_\p(\sqrt{N})$. Consequently, Assumption (i) of Proposition \ref{genZthlem} follows with $a_N = \sqrt{N}$. Assumption (ii) of Proposition \ref{genZthlem} follows from Lemma \ref{hesdet} (b) with $h_N(\bm X) = N$. This completes the proof of part (b).

    \vspace{0.1in}

    \noindent (c)~The proof of this part is similar to the proof of part (b), so we skip it. 
     \vspace{0.1in}

    \noindent (d)~Define the set $$G_N(\delta) := \{\bm x\in [q]^N : U_N(\bm x) \le \delta\}.$$
Note that for all $\bm x\in G_N(\delta)$, we have by the Cauchy-Schwarz inequality,
$$\left|\frac{\beta}{2}\sum_{i=1}^N \sum_{r=1}^q x_{i,r}(m_{i,r}(\bm x) - \overline{m}_i(\bm x))\right| \le \frac{\beta}{2}\sqrt{qN}\sqrt{\frac{N U_N(\bm X)}{q}} \le \frac{\beta N}{2}\sqrt{\delta}~,$$ i.e.
$$\left|\frac{\beta}{2}\sum_{i=1}^N \sum_{r=1}^q x_{i,r}m_{i,r}(\bm x) - \frac{\beta N}{2q}\bar{R}\right| \le \frac{\beta N}{2}\sqrt{\delta}$$
where $\overline{m}_{i}(\bm x) := q^{-1} \sum_{r=1}^q m_{i,r}(\bm x)$. Therefore, we have:

\begin{eqnarray*}
&&\p_{\beta,\bm B}(\bm X \in G_N(\delta))\\ &=& \sum_{\bm x \in G_N(\delta)} \frac{\exp\left\{\frac{\beta}{2}\sum_{i=1}^N \sum_{r=1}^q x_{i,r}m_{i,r}(\bm x) + \sum_{i=1}^N \sum_{r=1}^{q} B_r x_{i,r}\right\}}{Z_N(\beta,\bm B)}\\&\le& \frac{\exp\left\{\frac{\beta N}{2} \left(\frac{\bar{R}}{q} + \sqrt{\delta}\right)\right\}}{Z_N(\beta,\bm B)}\sum_{\bm x\in G_N(\delta)} \exp\left\{\sum_{i=1}^N \sum_{r=1}^{q} B_r x_{i,r}\right\}
\end{eqnarray*}
where $R_i = \sum_{j=1}^N a_{ij}$ and $\bar{R} = \frac{1}{N}\sum_{i=1}^N R_i$.  Now, note that:
$$\sum_{\bm x\in [q]^n} \exp\left\{\sum_{r=1}^{q} \sum_{i=1}^N B_r x_{i,r}\right\} = \left(\sum_{x\in [q]} \exp\left\{\sum_{r=1}^{q} B_r\mathbbm{1}_{x=r}\right\}\right)^N = \left(\sum_{r=1}^{q} e^{B_r}\right)^N~.$$ Hence, we have:
$$\p_{\beta,\bm B}(\bm X \in G_N(\delta)) \le \frac{1}{Z_N(\beta,\bm B)} \exp\left\{\frac{\beta N}{2} \left(\frac{\bar{R}}{q} + \sqrt{\delta}\right) + N\log\left(\sum_{r=1}^{q} e^{B_r}\right)\right\},\quad\text{i.e.}$$

\begin{equation}\label{singleprt}
    \frac{1}{N} \log \p_{\beta,\bm B}(\bm X \in G_N(\delta)) \le \frac{\beta}{2} \left(\frac{\bar{R}}{q} + \sqrt{\delta}\right) + \log\left( 1+ \sum_{r=1}^{q-1} e^{B_r}\right) - \frac{\log Z_N(\beta,\bm B)}{N}\quad
\end{equation}
Now, it follows from (1.8) and (1.9) in \cite{mukherjeebasak} (by choosing $\mathfrak{q}_i(r)=t_r$) that:
\begin{equation}\label{cwgen222}
\frac{\log Z_N(\beta,\bm B)}{N} \ge \sup_{\boldsymbol{t} \in \mathcal{P}([q])} \left\{ 
 \frac{\beta}{2} \bar{R}\sum_{r=1}^q t_r^2 + \sum_{r=1}^{q} B_rt_r - \sum_{r=1}^q  t_r\log t_r \right\}
\end{equation}
We now divide the proof into two cases.
\vspace{0.1in}

 \noindent \textbf{Case-1:}~$\bm B \ne \boldsymbol{0}$. In this case, choosing $$t_s = \frac{e^{B_s}}{\sum_{r=1}^{q} e^{B_r}}\quad (\text{for}~s\in [q]),$$ Then, $\boldsymbol{t} := (t_1,t_2,\ldots,t_q) \in \mathcal{P}([q])$, we have:
\begin{eqnarray*}
&&\sum_{r=1}^{q} B_rt_r - \sum_{r=1}^q  t_r\log t_r\\&=& \frac{\sum_{r=1}^{q} B_r e^{B_r}}{\sum_{s=1}^{q} e^{B_s}} - \sum_{r=1}^{q}\frac{ e^{B_r} }{\sum_{s=1}^{q} e^{B_s}}\log\left(\frac{ e^{B_r} }{\sum_{s=1}^{q} e^{B_s}}\right) \\&=& \sum_{r=1}^{q} \frac{ e^{B_r} }{\sum_{s=1}^{q} e^{B_s}} \log\left(\sum_{s=1}^{q} e^{B_s}\right) \\&=& \log\left(\sum_{s=1}^{q} e^{B_s}\right).
\end{eqnarray*}
Using \eqref{cwgen222} for this choice of ${\bm t}$ gives 
$$ \frac{\log Z_N(\beta,\boldsymbol B)}{N} \ge \frac{\beta}{2}\bar{R} \sum_{r=1}^qt_r^2+ \log\left(\sum_{r=1}^{q} e^{B_r}\right)>\frac{\beta}{2q}\bar{R}+\log\Big(\sum_{r=1}^q e^{B_r}\Big)~,$$
where the last inequality uses the fact that ${\bm t}\ne (q^{-1},\ldots,q^{-1})$ for ${\bm B}\ne {\bf 0}$.
Along with \eqref{singleprt}, this gives the existence of $\delta > 0$ such that
$$\limsup_{N\rightarrow\infty} \frac{1}{N} \log \p_{\beta,\bm B}(\bm X \in G_N(\delta)) < 0$$
thereby establishing part (d) for the case $\bm B \ne \boldsymbol{0}$.
\\
\vspace{0.1in}

 \noindent \textbf{Case-2:}~ ${\bm B = \boldsymbol{0},  \bm \beta>\bm \beta_c(q).}$ ~
In this case, it follows from Theorem 2.3 (iv) in \cite{gandolfo} that the constant vector $\frac{1}{q}\boldsymbol{1}_q$ is not a maximizer of the function in the RHS of \eqref{cwgen222}). With ${\bm t}$ denoting a  maximizer, using \eqref{cwgen222} we again have
$$ \frac{\log Z_N(\beta,\boldsymbol B)}{N} \ge \frac{\beta}{2}(\sum_{r=1}^qt_r^2)\bar{R}+ \log\left(\sum_{r=1}^{q} e^{B_r}\right)>\frac{\beta}{2q}\bar{R}+\log\Big(\sum_{r=1}^q e^{B_r}\Big)~.$$
As before, this along with \eqref{cwgen222}) gives the existence of $\delta>0$ such that
$$\limsup_{N\rightarrow\infty} \frac{1}{N} \log \p_{\beta,\bm B}(\bm X \in G_N(\delta)) < 0,$$ thereby completing the proof of part (d).
     \vspace{0.1in}

\noindent (e)~  Finally, the non-existence of consistent estimators for the model $\p_{\beta,\boldsymbol{0}}^{\mathrm{ER}}$ for $\beta<\beta_c$ follows from the contuiguity of $
\frac{1}{q}\boldsymbol{1}_q \times \mathcal{G}(N,p)$ to the measure $\p_{\beta,\boldsymbol{0}}^{\mathrm{ER}}$ from Theorem \ref{jinest}, on noting that $(0,\beta_c)\times \{\boldsymbol{0}_{q-1}\}\subseteq \Theta_{(\frac{1}{q},\ldots,\frac{1}{q})}$. Uniqueness of the global maximizer follows from \cite[Thm 2.3 (iii)]{gandolfo}, and positive definiteness follows from  \cite[Lemma 4.7 (i)]{gandolfo}.

\section{Convergence of $Z$-estimators}\label{mestsec4}
In this section, we prove a general result about convergence of $Z$-estimators of the true parameter $\boldsymbol{\tau_0}$ in a parametric family $(\p_{\boldsymbol{\tau}})_{\boldsymbol{\tau} \in \Theta \subseteq \R^d}$ for some $d\ge 1$. To begin with, suppose that $\hat{\bm \tau}_N := \hat{\bm \tau}_N(\bm X) \in \R^d$ is an approximate root of a function $w_N: \R^d\times [q]^N \to \R^d$, i.e. the following is satisfied:
$$ w_N(\hat{\bm \tau}_N,{\bf X}) = 0.$$
 \begin{proposition}\label{genZthlem}
     Suppose that the objective function $w$ satisfies the following two conditions:
     \begin{enumerate}
          \item [(i)]  $\| w_N(\boldsymbol{\tau_0},{\bm X})\|_2 = O_P(a_N)$ for some non-negative sequence $a_N$, where $\boldsymbol{\tau_0}\in \Theta$ is the true parameter,
         \item [(ii)] There exists a function $c: \Theta \to \R$ which is continuous and strictly positive at $\boldsymbol{\tau_0}$, and a non-negative function $h_N:[q]^N\mapsto [0,\infty)$ on the sample space, satisfying: 
         $$\lambda_{\min} (-\nabla w_N({\boldsymbol{\tau}})) \ge c({\boldsymbol{\tau}}) h_N(\bm X)$$ for all ${\boldsymbol{\tau}} \in \Theta$.
     \end{enumerate}
     Then, as long as $a_N = o_P(h_N({\bf X}))$, we have:
     $$\|\hat{\bm \tau}_N - \boldsymbol{\tau_0}\|_2 = O_{\p_{\boldsymbol{\tau_0}}}\left(\frac{a_N}{h_N(\bm X)}\right).$$
 \end{proposition}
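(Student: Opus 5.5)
The plan is to argue in two stages: first show that $\hat{\bm\tau}_N$ is trapped in a shrinking ball around $\boldsymbol{\tau_0}$, and then read off the rate from a mean-value expansion. Everything rests on one integral identity. Since $\hat{\bm\tau}_N$ is a root of $w_N(\cdot,\bm X)$, writing $\bm v := \hat{\bm \tau}_N - \boldsymbol{\tau_0}$ gives
$$-\bm v^\top w_N(\boldsymbol{\tau_0},\bm X) = \bm v^\top\big(w_N(\hat{\bm\tau}_N,\bm X)-w_N(\boldsymbol{\tau_0},\bm X)\big) = \int_0^1 \bm v^\top \nabla w_N(\boldsymbol{\tau_0}+u\bm v)\,\bm v\,du .$$
This uses differentiability of $w_N$ along the segment and the convexity of $\Theta$, which holds in all our applications since $\Theta=(0,\infty)\times\R^{q-1}$ or $\R^{q-1}$. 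Because $\bm v^\top \nabla w_N \bm v = \bm v^\top \mathrm{sym}(\nabla w_N)\bm v$, assumption (ii) gives
$$\|\bm v\|_2^2\, h_N(\bm X)\int_0^1 c(\boldsymbol{\tau_0}+u\bm v)\,du \le \|\bm v\|_2\, \|w_N(\boldsymbol{\tau_0},\bm X)\|_2 .$$
If $c$ were bounded below on all of $\Theta$, we would be done immediately. It is only assumed to be continuous and positive at $\boldsymbol{\tau_0}$, however, so the argument has to be localized.

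\emph{Localization.} Choose $\rho>0$ with $c\ge c_0:=c(\boldsymbol{\tau_0})/2$ on the ball $B(\boldsymbol{\tau_0},\rho)\subseteq\Theta$. From (ii), $c(\boldsymbol{\tau})h_N(\bm X)\le\lambda_{\min}(-\nabla w_N(\boldsymbol{\tau}))$, and I will treat $c$ as nonnegative wherever $-\nabla w_N$ is positive semidefinite, which is the case for the concave log pseudo-likelihood via Lemma \ref{hesdet}. Under that reading, $\bm\tau\mapsto -\bm v^\top w_N(\boldsymbol{\tau_0}+t\bm v/\|\bm v\|)$ is nondecreasing in $t$. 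Suppose $\|\bm v\|_2>\rho$. Comparing the point at $t=\|\bm v\|$ (where $w_N=0$) with the point at $t=\rho$ yields
$$\rho\, c_0\, h_N(\bm X)\le \|w_N(\boldsymbol{\tau_0},\bm X)\|_2 .$$
Since $a_N=o_P(h_N(\bm X))$ and $\|w_N(\boldsymbol{\tau_0})\|=O_P(a_N)$, this event has probability $o(1)$. So with probability tending to one we have $\|\bm v\|\le\rho$, and the whole segment lies in the ball.

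\emph{Rate.} On that event, the displayed inequality becomes $\|\bm v\|_2\le \|w_N(\boldsymbol{\tau_0},\bm X)\|_2/(c_0 h_N(\bm X))$, which is $O_P(a_N/h_N(\bm X))$.

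The main obstacle is the localization step. It requires some monotonicity, coming from negative semidefiniteness of $\nabla w_N$ everywhere, to rule out a far-away root. If one does not want to assume semidefiniteness globally, I would instead first prove consistency of $\hat{\bm\tau}_N$ separately and then apply only the local expansion. In the paper's uses, $w_N=\nabla\ell_N$ is the gradient of a concave function, so the monotonicity is available directly.
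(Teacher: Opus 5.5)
Your proposal is correct and is essentially the paper's argument. The paper also differentiates $g_N(t)=(\hat{\bm\tau}_N-\boldsymbol{\tau_0})^\top w_N(\boldsymbol{\tau}_t,\bm X)$ along the segment and uses continuity of $c$ only on a ball of radius $r$; it then handles your two cases (far root versus local rate) in one stroke through $\frac{c(\boldsymbol{\tau_0})}{2}h_N(\bm X)\,Y_N\min\{Y_N,r\}\le Y_N\|w_N(\boldsymbol{\tau_0},\bm X)\|_2$. Its step of truncating $-\int_0^1 g_N'(t)\,dt$ to $[0,\min\{1,r/Y_N\}]$ silently needs $g_N'\le 0$ on the whole segment, which is exactly the global negative semidefiniteness you make explicit; it holds in the applications because $c=C_{q,\gamma}$ is positive everywhere.
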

 
\begin{proof}
    For each $t\in [0,1]$, define
\[
\boldsymbol{\tau}_t := t \hat{\bm \tau}_N + (1-t)\boldsymbol{\tau_0}
\]
and introduce the following function $g_N: [0,1]\to \R$ :
\[
g_N(t) := (\hat {\bm \tau}_N - \boldsymbol{\tau_0})^\top w_N(\boldsymbol{\tau}_t,{\bf X}).
\]
Then we have
\[
g_N'(t)
= (\hat{\bm \tau}_N - \boldsymbol{\tau_0})^\top \nabla w_N(\theta_t,{\bf X})(\hat{\bm \tau}_N - \boldsymbol{\tau_0}).
\]
Setting
$Y_N := \|\hat{\bm \tau}_N - \boldsymbol{\tau_0}\|_2,$
using assumption (ii) along with the above display gives
\[
g_N'(t)
\le - c({\boldsymbol{\tau}}_t)\, h_N(\bm X)\, \|\hat{\bm \tau}_N - \boldsymbol{\tau_0}\|_2^2
= - c(\boldsymbol{\tau}_t)\, h_N(\bm X)\, Y_N^2 .
\]
Since $c$ is continuous at $\boldsymbol{\tau_0}$ and $c(\boldsymbol{\tau_0})>0$, there exists $r>0$ such that 
$$\inf_{\|\boldsymbol{\tau}-\boldsymbol{\tau_0}\|\le r}c(\boldsymbol{\tau})\ge \frac{c(\boldsymbol{\tau_0})}{2}>0.$$
For $t \le \frac{r}{Y_N}$, we have:
\[
\|\boldsymbol{\tau}_t - \boldsymbol{\tau_0}\|_2 = t Y_N \le r,
\]
and consequently
$$g_N'(t) \le -\frac{c({\bm \tau}_0)}{2} h_N(\bm X) Y_N^2.$$
Using the above bound gives
\begin{align*}
\bigl| g_N(1) - g_N(0) \bigr|
&= - \int_0^1 g_N'(t)\,dt  \\
&\ge - \int_0^{\min\{1,\; r/Y_N\}} g_N'(t)\,dt  \\
&\ge \int_0^{\min\{1,\; r/Y_N\}} \frac{c({\bm \tau}_0)}{2}\, h_N(\bm X)\,Y_N^2 \, dt  \\
&= \frac{c({\bm \tau}_0)}{2}\, h_N(\bm X)\,Y_N^2 \min\Bigl\{1,\frac{r}{Y_N}\Bigr\} \\
&= \frac{c({\bm \tau}_0)}{2}\, h_N(\bm X)\,Y_N \min\{Y_N,r\}.
\end{align*}
On the other hand, using the definition of $g_N(\cdot)$ gives
$$ |g_N(1)-g_N(0)|=|(\hat{\bm \tau}_N -{\bm \tau})^\top w_N({\bm \tau}_0,{\bf X})| \le Y_N \|w_N({\bm \tau}_0,{\bf X})\|_2= O_{P}\bigl(a_N Y_N\bigr).$$
where the last equality uses assumption (i).
Combining the above two displays gives
\begin{align*}
\frac{c({\bm \tau}_0)}{2}\, h_N(\bm X)\,Y_N \min\{Y_N,r\}
&= O_{P}\bigl(a_N Y_N\bigr),
\end{align*}
which implies that
\begin{align*}
\min\{Y_N,r\}
&= O_{\mathbb P}\!\left(\frac{a_N}{h_N(\bm X)}\right).
\end{align*}
Proposition \ref{genZthlem} now follows from the fact that $a_N/h_N(\bm X) \xrightarrow{P} 0$  and $r>0$ is fixed. 
\end{proof}

\section{Necessary tools for analyzing the derivatives of the log pseudo-likelihood}\label{techres}
The goal of this section is to develop necessary tools for bounding the first and second derivatives of the log pseudolikelihood, which  (in view of Proposition \ref{genZthlem}) is crucial in establishing consistency and rates of convergence of the MPL estimators. The 
first step towards doing this, is to bound the $L^2$-norm of the gradient of the log pseudo-likelihood with high probability. This can be achieved via the following general concentration inequality for sums of the random variables $\mathbbm{1}_{X_i=t}$ centered by their conditional means given $(X_j)_{j\ne i}$, which actually holds at all temperatures.

\begin{lem}\label{bp12}
	 For every constant $M >0$ (not depending on $N$) and every differentiable function $g : [-M, M] \to \R$ such that $g'$ is bounded,  there exists a constant $C >0$ (depending only on $g,\beta,{\bm B}, q,\gamma$) such that for every $t>0$, $\lambda\in [0,1]$, every array $(b_{itrs})_{i\in [N], t,r,s\in [q]} \in \R^{Nq}$ and every $r,s\in [q]$, we have:
	
	\begin{eqnarray}
	\p\left(\left| \sum_{i=1}^N \sum_{t=1}^q b_{itrs}\left(\mathbbm{1}_{X_i=t} - \theta_{i,t}({\bm X})\right) g(\bar{m}_i^{r,s}(\bm X))\right|\ge \sqrt{Nt}\right) \le 2 \exp\left(-\frac{CtN}{\|\bm L\|_2^2}\right) \label{eq:2.2}
	\end{eqnarray}
    where $\bar{m}_i^{r,s}(\bm X) := m_{i,r}(\bm X) - \lambda m_{i,s}(\bm X)$, $L_i = L_{i,r,s} := \sum_{t=1}^q |b_{itrs}|$, $\bm L := (L_1,\ldots,L_N)^\top$,
    and $\theta_{i,t}({\bm x})=\p(X_i=t|X_j=x_j, j\ne i)$ as in \eqref{defcondprob881}.
 %
\end{lem}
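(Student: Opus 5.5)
This is a Chatterjee-type concentration bound, and I would prove it with the exchangeable pair method of \cite{chatterjee2007stein}, following the Ising template (Lemma 2.1 of \cite{ghosal2020joint}) with colors handled coordinatewise. Write
\[
F(\bm x) := \sum_{i=1}^N \sum_{t=1}^q b_{itrs}\left(\mathbbm{1}_{x_i=t}-\theta_{i,t}(\bm x)\right) g(\bar m_i^{r,s}(\bm x)),
\]
and set $c_i(\bm x) := \sum_t b_{itrs}\mathbbm{1}_{x_i=t}$. Note that $\bar m_i^{r,s}(\bm x)$ does not depend on $x_i$, because $a_{ii}=0$ and $\lambda\in[0,1]$. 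Also $|\bar m_i^{r,s}|\le 2\gamma$ by \eqref{as1}, so I take $M=2\gamma$.

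I build the exchangeable pair $(\bm X,\bm X')$ by one step of Glauber dynamics. Pick $I$ uniformly from $[N]$ and resample $X_I$ from its conditional law $\theta_{I,\cdot}(\bm X)$. The antisymmetric function is
\[
\tilde F(\bm x,\bm x') := \sum_{i=1}^N \left(c_i(\bm x)-c_i(\bm x')\right) g(\bar m_i^{r,s}(\bm x)),
\]
with $g$ evaluated at $\bar m_i^{r,s}$ computed from $\bm x$ for $i=I$. Since $\bar m_I$ is unchanged by the update, this can be symmetrised as the average over $\bm x$ and $\bm x'$. A direct computation gives $\e[\tilde F(\bm X,\bm X')\mid \bm X] = F(\bm X)/N$. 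The key point is that only coordinate $I$ changes, and $\e[c_I(\bm X')\mid \bm X, I] = \sum_t b_{Itrs}\theta_{I,t}(\bm X)$.

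Chatterjee's theorem then gives $\p(|F|\ge u)\le 2\exp(-u^2/(2C'))$ provided
\[
v(\bm X) := \tfrac{N}{2}\e\left[\left|(F(\bm X)-F(\bm X'))\tilde F(\bm X,\bm X')\right| \,\middle|\, \bm X\right] \le C'
\]
almost surely. Taking $u=\sqrt{Nt}$, the required bound is $C' \lesssim \|\bm L\|_2^2$. Two ingredients feed this estimate.
\begin{enumerate}
\item If site $I=k$ changes, then $|\tilde F|\le 2L_k\|g\|_\infty$.
\item The change in $F$ comes from two sources. The first is the direct term at $k$, which is at most $2L_k\|g\|_\infty$. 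The second is the effect on the other sites $i\ne k$ through $\theta_{i,t}$ and $g(\bar m_i)$. Both $\theta_{i,t}$ (via the bounded partial derivatives of $f_t$, as in the proof of Theorem \ref{irrgr}) and $\bar m_i$ are Lipschitz in the local fields, which move by at most $2a_{ik}$. This second source therefore contributes $\lesssim \sum_i L_i a_{ik}$, with constants depending on $\beta$, $\|g\|_\infty$ and $\|g'\|_\infty$.
\end{enumerate}
Combining these,
\[
v \lesssim \sum_k L_k\Big(L_k + \sum_i a_{ik}L_i\Big) = \|\bm L\|_2^2 + \bm L^\top \bm A_N \bm L \le (1+\gamma)\|\bm L\|_2^2,
\]
where the last step uses $\|\bm A_N\|_2\le \|\bm A_N\|_1\le\gamma$ for a symmetric matrix.

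I expect the main obstacle to be the exchangeable-pair identity $\e[\tilde F\mid\bm X]=F/N$. It requires $g(\bar m_i)$ to be frozen while $x_i$ is resampled, and this is exactly where $a_{ii}=0$ and the special form $m_{i,r}-\lambda m_{i,s}$ are needed. The remaining work is bookkeeping of Lipschitz constants, which depend only on $g,\beta,\bm B,q,\gamma$.
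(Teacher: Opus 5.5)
Your proposal is correct and follows the paper's proof essentially step for step: the same Glauber-step exchangeable pair, the same antisymmetric function (on the support it reduces to $g(\bar m_U^{r,s}(\bm X))\sum_t b_{Utrs}(\mathbbm{1}_{X_U=t}-\mathbbm{1}_{X'_U=t})$), Chatterjee's Theorem 1.5, and the bound $|G(\bm X)-G(\bm X_u^{(i)})|\lesssim L_i+A_{i*}\bm L$, which yields the $(1+\gamma)\|\bm L\|_2^2$ variance proxy. One small correction: $\bar m_i^{r,s}$ being unchanged when $x_i$ is resampled needs only $a_{ii}=0$, not $\lambda\in[0,1]$ or the particular form $m_{i,r}-\lambda m_{i,s}$.
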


\begin{proof}
	   Fix $r,s \in [q]$, $c\in F$ and define $$G(\bm X) :=  \sum_{i=1}^N \sum_{t=1}^q b_{itrs} \left(\mathbbm{1}_{X_i=t} - \theta_{i,t}({\bm X})\right) g(\bar{m}_i^{r,s}(\bm X)).$$ 
       We now construct an exchangeable pair $({\bm X},{\bm X}')$ in the following way:
       
       Let $U$ be a discrete uniform random variable on $[N]$ independent of $\bm X$, and conditioned on $U=i$, we simulate $X_i'$ from the conditional distribution of $X_i$ given $(X_j)_{j\ne i}$. Then, we replace the $i^\mathrm{th}$ entry of $\bm X$ by $X_i'$ and call the resulting vector $\bm X'$. Then it is easy to check that $(\bm X,\bm X')$ is an exchangeable pair, i.e. $(\bm X,\bm X') \stackrel{D}{=} (\bm X',\bm X)$. Define:
	$$F(\bm x,\bm x') := \frac{1}{2}\sum_{i=1}^N \sum_{t=1}^q b_{itrs}(g(\bar{m}_i^{r,s}(\bm x)) + g(\bar{m}_i^{r,s}(\bm x))) (\mathbbm{1}_{x_i=t} - \mathbbm{1}_{x_i'=t}).$$
	Plugging in ${\bm x}={\bf X},{\bm x}'={\bm X}'$ we get
	\begin{align}\label{eq:F}F(\bm X,\bm X') =   g(\bar{m}_U^{r,s}(\bm X))\sum_{t=1}^q b_{Utrs} (\mathbbm{1}_{X_U= t}-\mathbbm{1}_{X_U'= t}),
    \end{align}which in turn gives
	\begin{equation*}
		\e\left[F(\bm X,\bm X')|\bm X\right] = \frac{1}{N}\sum_{i=1}^N  \sum_{t=1}^q b_{itrs} g(\bar{m}_i^{r,s}(\bm X))(\mathbbm{1}_{X_i= t}-\theta_{i,t}({\bm X})) = \frac{G(\bm X)}{N}~.
	\end{equation*}
Since $({\bm X},{\bm X}')$ is an exchangeable pair, we have  $\e G(\bm X) =\e F({\bm X}, {\bm X}')=0$.  Setting
$$\Delta(\bm X) := \frac{1}{2N}\e\left(|(G(\bm X) - G(\bm X'))F(\bm X,\bm X')|\Big|\bm X\right)~,$$
if we can show the existence of $K>0$ such that 
\begin{equation}\label{intgensumtr}
    \Delta(\bm X) \le \frac{K}{N^2}\|\bm L\|_2^2 ~,
\end{equation}
 then \cite[Thm 1.5]{chatterjee2007stein} gives, for $t\ge 0$,
$$\p\left(\frac{|G(\bm X)|}{N} \ge t\right) \le 2\exp\left\{-\frac{N^2t^2}{2K\|\bm L\|_2^2}\right\}$$ which further implies that $$\p(|G(\bm X)| \ge \sqrt{Nt}) \le 2\exp\left\{-\frac{Nt}{2K\|\bm L\|_2^2}\right\}$$
thereby establishing \eqref{eq:2.2}. Therefore, all that remains to complete the proof of \eqref{eq:2.2}, is to show \eqref{intgensumtr}. Towards this, for every $1\le i\le N$, $u\in [q]$ and $\bm x\in [q]^N$, define:
$$\bm x_{u}^{(i)} := (x_1,\ldots,x_{i-1},u,x_{i+1},\ldots,x_N)^\top.$$ Then using the definition of $\Delta({\bm X})$ we have:
\begin{eqnarray*}
	2N\Delta(\bm X) &=& \frac{1}{N}\sum_{i=1}^N \sum_{u=1}^q |(G(\bm X) - G(\bm X_u^{(i)}) F(\bm X, \bm X_u^{(i)})| \theta_{i,t}({\bm x})\\&\lesssim &   \frac{1}{N}\sum_{i=1}^N L_i \sum_{u=1}^q |(G(\bm X) - G(\bm X_u^{(i)}),
\end{eqnarray*}
where in the last inequality we use \eqref{eq:F} to get
$$ |F({\bm X},{\bm X}_u^{(i)})|\le \|g\|_\infty \sum_{t=1}^q |b_{itrs}|=\|g\|_\infty L_i.$$
 Now, 
 for any ${\bm X},{\bm Y}\in [q]^n$, using the definition of $G(\cdot)$,  we have:
\begin{eqnarray}\label{eq:sourav}
	&&|G(\bm X) - G(\bm Y)|\notag\\ &\le & \sum_{j=1}^N \sum_{t=1}^q |b_{jtrs}|\left|g(\bar{m}_{j}^{r,s}(\bm X)) X_{j,t} - g(\bar{m}_{j}^{r,s}(\bm Y)) Y_{j,t}\right|\notag\\ &+& \sum_{j=1}^N \sum_{t=1}^q |b_{jtrs}|\left|\theta_{j,t}(\bm X) g(\bar{m}_j^{r,s}(\bm X)) - \theta_{j,t}(\bm Y) g(\bar{m}_j^{r,s}(\bm Y))\right|\notag\\ &\le& \sum_{j=1}^N \sum_{t=1}^q  g(\bar{m}_j^{r,s}(\bm X)) |b_{jtrs}| \left|X_{j,t} - Y_{j,t}\right|+ \sum_{j=1}^N \sum_{t=1}^q Y_{j,t}|b_{jtrs}|  \left|g(\bar{m}_j^{r,s}(\bm X)) - g(\bar{m}_j^{r,s}(\bm Y))\right|\notag\\&+& \sum_{j=1}^N \sum_{t=1}^q |b_{jtrs}|\left|\theta_{j,t}(\bm X) g(\bar{m}_j^{r,s}(\bm X)) - \theta_{j,t}(\bm Y) g(\bar{m}_j^{r,s}(\bm Y))\right|\notag\\&\le & \|g\|_\infty\sum_{j=1}^N\sum_{t=1}^q  |b_{jtrs}| \left|X_{j,t} - Y_{j,t}\right| + \|g'\|_\infty\sum_{j=1}^N \sum_{t=1}^q Y_{j,t} |b_{jtrs}||\bar{m}_j^{r,s}(\bm X) - \bar{m}_j^{r,s}(\bm Y)|
\notag\\ &+& 	\sum_{j=1}^N \sum_{t=1}^q |b_{jtrs}|\left|\theta_{j,t}(\bm X) g(\bar{m}_j^{r,s}(\bm X)) - \theta_{j,t}(\bm Y) g(\bar{m}_j^{r,s}(\bm Y))\right|
\end{eqnarray}
We now bound each of the terms in the RHS of \eqref{eq:sourav}, for the special choice ${\bm Y}={\bm X}_u^{(i)}$. In this case, noting that $X_{j,t}=Y_{j,t}$ for all $j\ne i$, the first term in the RHS of \eqref{eq:sourav} can be bounded as follows:
\begin{align*}
    \sum_{j=1}^N\sum_{t=1}^q  |b_{jtrs}| \left|X_{j,t} - Y_{j,t}\right|= \sum_{t=1}^q  |b_{itrs}| \left|X_{i,t} - Y_{i,t}\right|\le  \sum_{t=1}^q  |b_{itrs}|=L_i.
\end{align*}
For bounding the second term in the RHS of \eqref{eq:sourav}, recalling that $\bar{m}_i^{r,s}({\bm X})=m_i^r({\bm X})-\lambda m_i^s({\bm X})$ we get
\begin{align*}
   \sum_{j=1}^N \sum_{t=1}^q Y_{j,t} |b_{jtrs}||\bar{m}_j^{r,s}(\bm X) - \bar{m}_j^{r,s}(\bm Y)|= &\sum_{j=1}^N \sum_{k=1}^N \sum_{t=1}^q Y_{j,t} a_{jk} |b_{jtrs}||X_{kr} - Y_{kr} -\lambda X_{ks} + \lambda Y_{ks}|\\
   = & \sum_{j=1}^N \sum_{t=1}^q a_{ji} |b_{jtrs}||X_{ir} - Y_{ir} - \lambda X_{is} + \lambda Y_{is}|\\
   \le &(1+\lambda)\sum_{j=1}^Na_{ji}L_j=(1+\lambda) A_{i*}{\bm L},
\end{align*}
where $A_{i*}$ denotes the $i^{\mathrm{th}}$ row of $\bm A$.
Proceeding to bound the third term in the RHS of \eqref{eq:sourav}, 
 define the function
$$\psi_{t}(\alpha_1,\ldots,\alpha_q) :=  g(\alpha_r-\lambda \alpha_s) \frac{\exp\{\beta \alpha_t+B_t\}}{\sum_{s=1}^q \exp\{\beta \alpha_s + B_s\}}~,$$
and note that 
$\|\nabla \psi_t\|_\infty$ is bounded, since $g$ has a bounded derivative. Using this definition we can write $\theta_{j,t}(\bm X) g(\bar{m}_j^{r,s}(\bm X)) = \psi_{t}(m_{j,1}(\bm X),\ldots,m_{j,q}(\bm X))$, and hence mean-value theorem gives
$$|\theta_{j,t}(\bm X) g(\bar{m}_j^{r,s}(\bm X)) - \theta_{j,t}(\bm Y) g(\bar{m}_j^{r,s}(\bm Y))|\lesssim  \sum_{w=1}^q | m_{j,w}(\bm X) - m_{j,w}(\bm Y)|.$$
Using this, the the third term in the RHS of \eqref{eq:sourav} can be bounded as follows:
\begin{eqnarray*}
	&&\sum_{j=1}^N \sum_{t=1}^q |b_{jtrs}|\left|\theta_{j,t}(\bm X) g(\bar{m}_j^{r,s}(\bm X)) - \theta_{j,t}(\bm Y) g(\bar{m}_j^{r,s}(\bm Y))\right|\\
    &\lesssim& \sum_{w=1}^q \sum_{j=1}^N \sum_{t=1}^q |b_{jtrs}| |m_{j,w}(\bm X) - m_{j,w}(\bm Y)|\\&=& \sum_{w=1}^q \sum_{j=1}^N \sum_{t=1}^q a_{ji}|b_{jtrs}||X_{i,w}-Y_{i,w}|\\&\le & \sum_{w=1}^q \sum_{j=1}^N \sum_{t=1}^q a_{ji} |b_{jtrs}|= q\sum_{j=1}^N a_{ji} L_j = qA_{i*}\bm L~.
\end{eqnarray*}
Combining the last three bounds to the RHS of \eqref{eq:sourav} we get $$|G(\bm X)-G(\bm X_u^{(i)})| \lesssim  L_i + A_{i*} \bm L,$$ and consequently,
\begin{eqnarray*}
   2N\Delta(\bm X) \lesssim \frac{1}{N}\sum_{i=1}^N L_i (L_i + A_{i*} \bm L) &=& \frac{1}{N}\left(\|\bm L\|_2^2 + \bm L^\top \bm A \bm L\right)\\&\le& \frac{1}{N} (1+\|\bm A\|_2) \|\bm L\|_2^2\le  \frac{1}{N} (1+\gamma) \|\bm L\|_2^2.
\end{eqnarray*}
This establishes \eqref{intgensumtr}, and consequently, \eqref{eq:2.2}.

\end{proof}

 The second crucial step towards establishing consistency and rates of convergence of the MPL estimators is to provide a lower bound on the minimum eigenvalue of the negative Hessian of the log pseudo-likelihood function. The following lemma achieves this.
  
\begin{lem}\label{hesdet}
 There exists a continuous strictly positive function $C_{q,\gamma}$ on $ [0,\infty)\times \R^{q-1}$, such that: 
\begin{enumerate}
    \item [(a)] $\lambda_{\min}\left(-\nabla^2 \ell_N(\beta,\bm B)\right) \ge C_{q,\gamma}(\beta,\bm B) N T_N(\bm X)$.

    \item [(b)] $\lambda_{\min}\left(-\nabla_{\bm B}^2 \ell_N(\beta,\bm B)\right) \ge  C_{q,\gamma}(\beta,\bm B) N$ and

    \item [(c)] $-\frac{\partial^2 \ell_N(\beta,\bm B)}{\partial \beta^2} \ge C_{q,\gamma}(\beta,\bm B) NU_N(\bm X)$
\end{enumerate}
where $T_N$ and $U_N$ are as defined in \eqref{eq:Tndefn} and \eqref{eq:Undefn}, respectively.
\end{lem}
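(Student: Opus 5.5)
The Hessian of $\ell_N$ is a sum over sites of conditional covariance matrices, so I will lower bound each site's contribution by an explicit quadratic form and then sum over $i$. For each $i$, write $\bm v_i := (m_{i,1}(\bm X),\ldots,m_{i,q}(\bm X))$ and let $\theta_i := (\theta_{i,1},\ldots,\theta_{i,q})$ be the conditional law of $X_i$. Differentiating \eqref{parbeta} and \eqref{parB}, the $i$-th summand of $\ell_N$ is a log-partition function in the linear parameter $\eta_{i,r} := \beta m_{i,r} + B_r$, with $B_q=0$. Hence, for a direction $(c,\bm b)\in\R\times\R^{q-1}$, and setting $b_q:=0$,
\[
-(c,\bm b)\nabla^2\ell_N(c,\bm b)^\top = \sum_{i=1}^N \mathrm{Var}_{\theta_i}\big(c\, m_{i,Y} + b_Y\big),
\]
where $Y\sim\theta_i$. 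The first step is to bound this variance from below uniformly. By \eqref{as1} we have $m_{i,r}\in[0,\gamma]$, so $\theta_{i,r}\ge\alpha = q^{-1}e^{-\beta\gamma-2\|\bm B\|_\infty}$ as in \eqref{alphadef4}. For any law $w$ with $\min_r w_r\ge\alpha$,
\[
\mathrm{Var}_w(z_Y) = \tfrac12\sum_{r,s} w_r w_s (z_r-z_s)^2 \ge \alpha^2 \sum_{r<s}(z_r-z_s)^2 .
\]
Since $\alpha$ is continuous and positive in $(\beta,\bm B)$, this gives
\[
-(c,\bm b)\nabla^2\ell_N(c,\bm b)^\top \ge \alpha^2\sum_{i}\sum_{r<s}\big(c\,\tm_i^{r,s} + b_r - b_s\big)^2 .
\]

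Parts (b) and (c) then follow quickly. For (c), take $\bm b=0$, $c=1$; the right side is exactly $\alpha^2 N U_N(\bm X)$ by \eqref{eq:Undefn}. For (b), take $c=0$; the right side is $N\alpha^2\sum_{r<s}(b_r-b_s)^2$. Because $b_q=0$, this is at least $N\alpha^2\sum_{r<q} b_r^2 = N\alpha^2\|\bm b\|^2$.

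For (a), expand the square around the means. Write $\tm_i^{r,s} = \overline{m}_r-\overline{m}_s + d_i^{r,s}$ with $\sum_i d_i^{r,s}=0$. The cross terms vanish after summing over $i$, so
\[
\sum_i\big(c\,\tm_i^{r,s} + b_r - b_s\big)^2 = c^2\sum_i (d_i^{r,s})^2 + N\big(c(\overline{m}_r-\overline{m}_s) + b_r-b_s\big)^2 .
\]
Summing over $r<s$ and using \eqref{eq:Tndefn}, the quadratic form is at least
\[
\alpha^2 N\Big[c^2 T_N + \sum_{r<s}\big(c(\overline{m}_r-\overline{m}_s)+b_r-b_s\big)^2\Big].
\]
It remains to bound the bracket below by $\kappa\min(T_N,1)(c^2+\|\bm b\|^2)$. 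Since $T_N\le q^2\gamma^2$, $\min(T_N,1)\ge T_N/\max(1,q^2\gamma^2)$, so the final constant only picks up a factor depending on $q,\gamma$.

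\textbf{The main obstacle} is this last bound, namely controlling $\|\bm b\|$ when $|c|$ is small. I would split into two cases.
\begin{itemize}
\item If $\|\bm b\|\le K|c|$ with $K := 4q\gamma$, then $c^2T_N \ge T_N(c^2+\|\bm b\|^2)/(1+K^2)$.
\item Otherwise, take the pairs $(r,q)$. Since $|\overline{m}_r-\overline{m}_q|\le\gamma$,
\[
\sum_{r<q}\big(b_r + c(\overline{m}_r-\overline{m}_q)\big)^2 \ge \big(\|\bm b\| - \sqrt q\,\gamma|c|\big)^2 \ge \tfrac14\|\bm b\|^2 \gtrsim c^2+\|\bm b\|^2 .
\]
\end{itemize}
Combining the two cases gives the bracket bound with a constant $\kappa>0$ depending only on $q,\gamma$. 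Taking $C_{q,\gamma}(\beta,\bm B)$ to be $\alpha^2$ times this constant (and $\min(\cdot,1)$ absorbed via the factor above) gives a continuous positive function that works for all three parts.
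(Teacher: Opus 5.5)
Your proof is correct. The reduction to the quadratic form $\alpha^2\sum_i\sum_{r<s}(c\,\tm_i^{r,s}+b_r-b_s)^2$ is the same as the paper's $\alpha^2\bm y^\top\bm H_N\bm y$ in \eqref{feqn}; the paper reaches it by expanding the Hessian entrywise rather than through your conditional-variance identity. Parts (b) and (c) also match the paper, and your bound $\sum_{r<s}(b_r-b_s)^2\ge\|\bm b\|^2$ is its remark that $\lambda_{\min}(q\bm I-\bm J)=1$.

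Part (a) is where the routes differ. The paper bounds $\lambda_{\min}(\bm H_N)$ via $\lambda_{\min}(\bm H_N)\,\mathrm{tr}(\bm H_N)^{q-1}\ge\det(\bm H_N)$. It computes $\det(\bm H_N)=N^q q^{q-2}T_N(\bm X)$ through the Schur complement of the block $N(q\bm I-\bm J)$ and uses $\mathrm{tr}(\bm H_N)=O(N)$. You instead center $\tm_i^{r,s}$ at its average over sites. This splits the form exactly into $c^2NT_N(\bm X)$ plus the mean-level term $N\sum_{r<s}\big(c(\overline{m}_r-\overline{m}_s)+b_r-b_s\big)^2$, and you then control $\|\bm b\|$ for small $|c|$ with a two-case argument.

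Your decomposition makes explicit what the determinant computation leaves implicit: $NT_N(\bm X)$ is exactly that Schur complement, i.e.\ the minimum of the form over $\bm b$ at $c=1$, since the mean-level term vanishes at $b_r=\overline{m}_q-\overline{m}_r$. It also needs no determinant algebra. Its constant depends only polynomially on $q$, whereas the det/trace inequality pays a factor $\big(\binom{q}{2}\gamma^2+(q-1)^2\big)^{q-1}$; this is immaterial here because $q$ is fixed.

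The paper's route is shorter once $\det(\bm H_N)$ is in hand and gives a closed-form constant. The identity $\det(\bm H_N)\propto T_N(\bm X)$ also shows directly that $\bm H_N$ degenerates exactly when $T_N(\bm X)=0$.
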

\begin{proof}
It follows from \eqref{parbeta}, \eqref{parB} and a simple calculation, that the second-order partial derivatives of $\ell_N$ are given by: 
\begin{eqnarray*}\label{beta2}
    \frac{\partial^2 \ell_N(\beta,\bm B)}{\partial \beta^2} &=& -\sum_{i=1}^{N} \sum_{1\le a < b \le q} \left(m_{i,a}(\bm X) - m_{i,b}(\bm X)\right)^{2} \theta_{i,a} (\bm X)\theta_{i, b}(\bm X),\\
\label{Bbeta}
    \frac{\partial^2 \ell_N(\beta,\bm B)}{\partial B_s \partial \beta} &=& -\sum_{i=1}^{N} \sum_{a=1}^q \left(m_{i,s}(\bm X) - m_{i,a}(\bm X)\right) \theta_{i,a}(\bm X) \theta_{i, s} (\bm X)\quad(1\le s \le q-1),\\
\label{B2}
    \frac{\partial^2 \ell_N(\beta,\bm B)}{\partial B_s^2} &=& -\sum_{i=1}^{N} \sum_{a\ne s} \theta_{i, s} (\bm X) \theta_{i, a} (\bm X) \quad(1\le s \le q-1),\\
\label{BB}
    \frac{\partial^2 \ell_N(\beta,\bm B)}{\partial B_r\partial B_s} &=& \sum_{i=1}^{N}  \theta_{i,r} (\bm X) \theta_{i, s} (\bm X)\quad(1\le r\ne s \le q-1)
\end{eqnarray*}
Here $\theta_{i,t}({\bm x})=\p(X_i=t|X_j=x_j,j\ne i)$ is as in \eqref{defcondprob881}.
Using the above expressions, for any $\bm y := (y_0,y_1,\ldots,y_{q-1})\in \mathbb{R}^q$, we have:

\begin{eqnarray}
\notag&&-\bm y^\top \nabla^2\ell_N(\beta,\bm B)\bm y\notag\\
&=&
\sum_{i=1}^N \Bigg[
y_0^2 \sum_{1\le a<b\le q} \big(m_{i,a}(\bm X)-m_{i,b}(\bm X)\big)^2 \theta_{i,a}(\bm X)\theta_{i,b}(\bm X) \notag \\
&&\qquad
+2y_0 \sum_{s=1}^{q-1} y_s \sum_{a=1}^q \big(m_{i,s}(\bm X)-m_{i,a}(\bm X)\big) \theta_{i,a}(\bm X)\theta_{i,s}(\bm X) \notag\\
&&\qquad
+\sum_{s=1}^{q-1} y_s^2 \theta_{i,s}(\bm X)\sum_{a\neq s}\theta_{i,a}(\bm X)
-2\sum_{1\le r<s\le q-1} y_r y_s \theta_{i,r}(\bm X)\theta_{i,s}(\bm X)
\Bigg] \label{longcalculation7878}.
\end{eqnarray}

The first term in the RHS of \eqref{longcalculation7878} (without $y_0^2$) can be simplified as follows:
\begin{eqnarray*}
     &&\sum_{1\le a<b\le q} \big(m_{i,a}(\bm X)-m_{i,b}(\bm X)\big)^2 \theta_{i,a}(\bm X)\theta_{i,b}(\bm X)\\ &=&
 \sum_{1\le r<s\le q-1} \big(m_{i,r}(\bm X)-m_{i,s}(\bm X)\big)^2 \theta_{i,r}(\bm X)\theta_{i,s}(\bm X) \notag\\
&+&  \sum_{r=1}^{q-1} \big(m_{i,r}(\bm X)-m_{i,q}(\bm X)\big)^2 \theta_{i,r}(\bm X)\theta_{i,q}(\bm X) \notag\\
\end{eqnarray*}

The second term in the RHS of \eqref{longcalculation7878} (without $2y_0$) can be simplified as follows:

\begin{eqnarray*}
     &&\sum_{s=1}^{q-1} y_s \sum_{a=1}^q \big(m_{i,s}(\bm X)-m_{i,a}(\bm X)\big) \theta_{i,a}(\bm X)\theta_{i,s}(\bm X)\\
     &=& \sum_{s,r=1}^{q-1} \big(m_{i,s}(\bm X)-m_{i,r}(\bm X)\big)y_s\theta_{i,s}(\bm X)\theta_{i,r}(\bm X)\\ 
&+&  \sum_{s=1}^{q-1} \big(m_{i,s}(\bm X)-m_{i,q}(\bm X)\big)y_s\theta_{i,s}(\bm X)\theta_{i,q}(\bm X) \notag \\
&=&
 \sum_{1\le r<s\le q-1} \big(m_{i,s}(\bm X)-m_{i,r}(\bm X)\big)(y_s-y_r)\theta_{i,s}(\bm X)\theta_{i,r}(\bm X) \notag\\
&+&\sum_{s=1}^{q-1} \big(m_{i,s}(\bm X)-m_{i,q}(\bm X)\big)y_s\theta_{i,s}(\bm X)\theta_{i,q}(\bm X) \notag\\
\end{eqnarray*}

The third term in the RHS of \eqref{longcalculation7878} can be simplified as follows:
\begin{align*}
   &\sum_{s=1}^{q-1} y_s^2 \theta_{i,s}(\bm X)\sum_{a\neq s}\theta_{i,a}(\bm X)\\=&\sum_{s=1}^{q-1}y_s^2\theta_{i,s}({\bm X})\sum_{r=1}^{q-1}\mathbbm{1}_{r\ne s} ({\bm X})\theta_{i,r}({\bm X})+  \sum_{s=1}^{q-1} y_s^2 \theta_{i,s}(\bm X)\theta_{i,q}(\bm X)\\
  =& \sum_{1\le r<s\le q-1} (y_r^2+y_s^2)\theta_{i,r}(\bm X)\theta_{i,s}(\bm X)
+\sum_{s=1}^{q-1} y_s^2 \theta_{i,s}(\bm X)\theta_{i,q}(\bm X)\\
=&\sum_{1\le r<s\le q-1} (y_r-y_s)^2\theta_{i,r}(\bm X)\theta_{i,s}(\bm X)+2\sum_{1\le r<s\le q-1} y_ry_s\theta_{i,r}(\bm X)\theta_{i,s}(\bm X)\\
+&\sum_{s=1}^{q-1} y_s^2 \theta_{i,s}(\bm X)\theta_{i,q}(\bm X).
\end{align*}
Using the three  displays above the RHS of\eqref{longcalculation7878} simplifies to
\begin{eqnarray}
&&
\sum_{i=1}^N \Bigg[
y_0^2 \sum_{1\le r<s\le q-1} \big(m_{i,r}(\bm X)-m_{i,s}(\bm X)\big)^2 \theta_{i,r}(\bm X)\theta_{i,s}(\bm X) \notag\\
&&\qquad
+y_0^2 \sum_{r=1}^{q-1} \big(m_{i,r}(\bm X)-m_{i,q}(\bm X)\big)^2 \theta_{i,r}(\bm X)\theta_{i,q}(\bm X) \notag\\
&&\qquad
+2y_0 \sum_{1\le r<s\le q-1} \big(m_{i,r}(\bm X)-m_{i,s}(\bm X)\big)(y_r-y_s)\theta_{i,r}(\bm X)\theta_{i,s}(\bm X) \notag\\
&&\qquad
+2y_0 \sum_{r=1}^{q-1} \big(m_{i,r}(\bm X)-m_{i,q}(\bm X)\big)y_r\theta_{i,r}(\bm X)\theta_{i,q}(\bm X) \notag\\
&&\qquad
+\sum_{1\le r<s\le q-1} (y_r-y_s)^2 \theta_{i,r}(\bm X)\theta_{i,s}(\bm X)
+\sum_{r=1}^{q-1} y_r^2 \theta_{i,r}(\bm X)\theta_{i,q}(\bm X)
\Bigg]\notag \\
&=&
\sum_{i=1}^N \sum_{1\le r<s\le q-1}
\{(m_{i,r}(\bm X)-m_{i,s}(\bm X)) y_0 + y_r - y_s\}^2
\theta_{i,r}(\bm X)\theta_{i,s}(\bm X) \notag\\
&&\qquad
+\sum_{i=1}^N \sum_{r=1}^{q-1}
\{(m_{i,r}(\bm X)-m_{i,q}(\bm X))y_0 + y_r\}^2
\theta_{i,r}(\bm X)\theta_{i,q}(\bm X).\label{longcalculation78}
\end{eqnarray}

Now, it follows from Condition \eqref{as1} that $m_{i,r}(\bm X) \le \gamma$, and so
$$\theta_{i,r}({\bm X}) \ge \alpha := q^{-1}\exp\{-\beta \gamma - 2\|\bm B\|_\infty\} > 0~.$$ 
Hence, 
\begin{eqnarray}\label{feqn}
-\bm y^\top \nabla^2\ell_N(\beta,\bm B)\bm y &\ge& \alpha^2\sum_{i=1}^N \sum_{1\le r<s\le q-1} \{(m_{i,r}(\bm X) - m_{i,s}(\bm X)) y_0 + y_r - y_s\}^2\nonumber \\&+& \alpha^2\sum_{i=1}^N \sum_{r=1}^{q-1} \{(m_{i,r}(\bm X) - m_{i,q}(\bm X))y_0 + y_r\}^2\nonumber\\&=& \alpha^2 \bm y^\top \bm H_N \bm y 
\end{eqnarray}
where
$$\bm H_N :=\begin{pmatrix}
            \sum_{i}\sum_{1\le r<s\le q}(m_{i,r}(\bm X)-m_{i,s}(\bm X))^2 & \sum_i\bm v_i^T\\
            \sum_i\bm v_i & N(q\bm I-\bm J)
\end{pmatrix}$$
with $\bm I$ and $\bm J$ being the $(q-1)\times (q-1)$ identity matrix and matrix of all ones, respectively, and $$\bm v_i=(qm_{i,1}(\bm X)-\sum_r m_{i,r}(\bm X),\ldots,qm_{i,q-1}(\bm X)-\sum_r m_{i,r}(\bm X))^T \in \R^{q-1}.$$ The above equality \eqref{feqn} follows from repeating the above calculations, and  \eqref{longcalculation78} 
replacing $\theta_{i,r}({\bm X})$ by $1$ throughout. Thus $\bm H_N$ is non-negative definite, and denoting
 $\lambda_1\le\ldots\le \lambda_q$ to be eigenvalues of $\bm H_N$ we have: 
\begin{equation}\label{mr}
    \lambda_1 ~\mathrm{tr}(\bm H_N)^{q-1} = \lambda_1 \sum_{1\le i_1,\ldots,i_{q-1}\le q} \lambda_{i_1}\ldots\lambda_{i_{q-1}} \ge \prod_{i=1}^q \lambda_i = \mathrm{det}(\bm H_N).
\end{equation}
Next, using the fact that $(q{\bm I}-{\bm J})^{-1}=\frac{1}{q}({\bm I}+{\bm J})$, along with the expression for the determinant of a block partitioned matrix, we get:
\begin{eqnarray*}
    &&\mathrm{det}(\bm H_N)\\ &=&\mathrm{det}(N(q\bm I-\bm J)) \left[\sum_{i}\sum_{r<s}(m_{i,r}(\bm X)-m_{i,s}(\bm X))^2 - \frac{1}{Nq}\left(\sum_i\bm v_i\right)^\top (\bm I +\bm J)\left(\sum_i\bm v_i\right)\right].
\end{eqnarray*}
 Next, on noting that the $r^{\mathrm{th}}$ entry of $\sum_i v_ i$ is given by $Nq(\overline m_r(\bm X)-\frac1q\sum_{s=1}^q\overline m_s(\bm X))$ for $1\le r\le q-1$, we have:
\begin{eqnarray*}
&&\frac{1}{Nq}\Big(\sum_{i=1}^N \bm v_i\Big)^\top (\bm I+\bm J)\Big(\sum_{i=1}^N \bm v_i\Big)\\
&=& Nq\left[\sum_{r=1}^{q-1}\left(\overline m_r(\bm X)-\frac1q\sum_{s=1}^q\overline m_s(\bm X)\right)^2
+\left(\sum_{r=1}^{q-1}\left(\overline m_r(\bm X)-\frac1q\sum_{s=1}^q\overline m_s(\bm X)\right)\right)^2\right] \nonumber\\
&=& Nq\left[\sum_{r=1}^{q-1}\left(\overline m_r(\bm X)-\frac1q\sum_{s=1}^q\overline m_s(\bm X)\right)^2
+\left(-\overline m_q(\bm X)+\frac1q\sum_{s=1}^q\overline m_s(\bm X)\right)^2\right] \nonumber\\
&=& Nq\sum_{r=1}^q\left(\overline m_r(\bm X)-\frac1q\sum_{s=1}^q\overline m_s(\bm X)\right)^2 .
\end{eqnarray*}
Hence, we have:
\begin{eqnarray*}
    &&\mathrm{det}(\bm H_N)\\&=&  N^{q-1}q^{q-2}\left[\sum_{i}\sum_{r<s}(m_{i,r}(\bm X)-m_{i,s}(\bm X))^2 - Nq\sum_{r=1}^q \left(\overline{m}_r(\bm X) - \frac{1}{q}\sum_{s=1}^q \overline{m}_s(\bm X)\right)^2\right]\\&=& N^{q-1}q^{q-2}\left[\sum_{i}\sum_{r<s}(m_{i,r}(\bm X)-m_{i,s}(\bm X))^2 - N\sum_{r<s}\left(\overline{m}_r(\bm X) -  \overline{m}_s(\bm X)\right)^2\right]\\&=& N^q q^{q-2} T_N(\bm X),
\end{eqnarray*}
where the last equality uses \eqref{mixedmirGini}. Also, note that:
$$\mathrm{tr}(\bm H_N) = \sum_i \sum_{r<s} (m_{i,r}(\bm X) - m_{i,s}(\bm X))^2 + N(q-1)^2 \le N\left[\binom{q}{2}\gamma^2 + (q-1)^2\right] ~.$$
Hence, \eqref{mr} implies that $$\lambda_1 \ge C_{q,\gamma} N T_N(\bm X)~,$$ where $$C_{q,\gamma} := \frac{q^{q-2}}{\left(\binom{q}{2}\gamma^2 + (q-1)^2\right)^{q-1}}.$$
It now follows from \eqref{feqn} that 
\begin{equation*}
    -\bm y^\top \nabla^2\ell_N(\beta,\bm B)\bm y \ge C_{q,\gamma}\delta^2(\beta,\bm B,\gamma) \|\bm y\|_2^2 NT_N(\bm X)
\end{equation*}
for all $\bm y \in \mathbb{R}^q$, from which part (a) follows on taking $\bm y$ as an eigenvector of $-\nabla^2\ell_N(\beta,\bm B)$ corresponding to its minimum eigenvalue. Part (b) follows from \eqref{feqn} on taking $\bm y = (0,\bm \tilde{\bm y})$, 
and noting that the smallest eigenvalue of the $(q-1)\times (q-1)$ matrix $q\bm I - \bm J$ is $1$. Part (c) follows on taking $y$ in \eqref{feqn} to be the vector $(1,0,\ldots, 0)$ of length $q$. 
\end{proof}




\section{Proof of Lemma \ref{cwresult7}}\label{prcw7}
The proof of Lemma \ref{cwresult7} is based on reducing the Curie-Weiss Potts model to a product measure, by conditioning on a suitable random variable. Towards this, conditional on $\bm X \sim \p_{\beta,\bm B}^{\mathrm{CW}}$, let $Z_1,\ldots,Z_q$ be independent random variables with $Z_r \sim N(\bar{X}_r,(\beta N)^{-1})$. Define $\bm Z := (Z_1.\ldots,Z_q)$.
\begin{lem}\label{condindp}
If $\bm X$ follows the Curie-Weiss Potts model $\p_{\beta,\bm B}^{\mathrm{CW}}$, then the entries of $\bm X|\bm Z$ are independent and identically distributed, with 
$$\p_{\beta,\bm B}^{\mathrm{CW}}(\bm X_i=r|\bm Z)=\frac{e^{\beta Z_r+B_r}}{\sum_{s=1}^qe^{\beta Z_s+B_s}}.$$
\end{lem}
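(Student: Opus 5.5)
The plan is to compute the conditional law of $\bm X$ given $\bm Z$ directly via Bayes' rule, using the Gaussian (Hubbard--Stratonovich) linearization of the quadratic term $\frac{N\beta}{2}\sum_r \bar x_r^2$. The joint law of $(\bm X,\bm Z)$ has a density with respect to counting measure on $[q]^N$ times Lebesgue measure on $\mathbb{R}^q$. By \eqref{cwpotts1} and the definition of $\bm Z$, this density equals
\begin{equation*}
\frac{1}{Z_N^{\mathrm{CW}}(\beta,\bm B)}\exp\left(\frac{N\beta}{2}\sum_{r=1}^q \bar x_r^2 + N\sum_{r=1}^q B_r \bar x_r\right)\left(\frac{\beta N}{2\pi}\right)^{q/2}\exp\left(-\frac{\beta N}{2}\sum_{r=1}^q (z_r-\bar x_r)^2\right).
\end{equation*}

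The key step is to expand $(z_r-\bar x_r)^2 = z_r^2 - 2z_r\bar x_r + \bar x_r^2$. The $\bar x_r^2$ terms then cancel exactly against the Curie--Weiss interaction, so the exponent becomes
\begin{equation*}
-\frac{\beta N}{2}\sum_r z_r^2 + \sum_r (\beta z_r + B_r) N\bar x_r.
\end{equation*}
Since $N\bar x_r=\sum_{i=1}^N \mathbbm{1}_{x_i=r}$, the $\bm x$-dependent part factorizes as $\prod_{i=1}^N \exp(\beta z_{x_i} + B_{x_i})$. Here we use the convention $B_q=0$.

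Dividing by the marginal density of $\bm Z$ gives the conditional law. That marginal is obtained by summing over $\bm x$, which yields $\left(\sum_{s=1}^q e^{\beta z_s+B_s}\right)^N$ times factors depending only on $\bm z$. Hence
\begin{equation*}
\p(\bm X=\bm x\mid \bm Z=\bm z)=\prod_{i=1}^N \frac{e^{\beta z_{x_i}+B_{x_i}}}{\sum_{s=1}^q e^{\beta z_s+B_s}},
\end{equation*}
which is precisely the claimed i.i.d. structure with the stated marginal probabilities.

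There is no real obstacle in this argument. The only points needing care are that the Gaussian variance $(\beta N)^{-1}$ is chosen exactly so that the $\bar x_r^2$ terms cancel, and that the normalizing constants, which depend on $\bm z$ only, drop out of the conditional law.
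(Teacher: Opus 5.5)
Your proposal is correct and follows essentially the same route as the paper: write the joint law of $(\bm X,\bm Z)$, expand $(z_r-\bar x_r)^2$ so that the $\bar x_r^2$ terms cancel against the Curie--Weiss interaction, and observe that the remaining exponent $\sum_{i=1}^N\sum_{r=1}^q x_{i,r}(\beta z_r+B_r)$ factorizes over $i$. You also keep track of the normalizing constants explicitly, computing the marginal as $\bigl(\sum_{s=1}^q e^{\beta z_s+B_s}\bigr)^N$; the paper leaves this implicit in its ``$\propto$'' steps.
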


\begin{proof}
Recall from \eqref{cwpotts1} that:
$$\p_{\beta,\bm B}^{\mathrm{CW}}(\bm X) \propto \exp\left(\frac{N\beta}{2}\sum_{r=1}^q \bar{X}_r^2 + N\sum_{r=1}^{q} B_r\bar{X}_r\right)~.$$
Hence, we have:
{\color{black}\begin{eqnarray*}
\p_{\beta,\bm B}^{\mathrm{CW}}(\bm X,\bm Z) &\propto& \p_{\beta,\bm B}^{\mathrm{CW}}(\bm X) \prod_r \mathbb{P}(Z_r|\bm X)\\ &\propto& \exp\left\{\frac{N\beta}{2} \sum_r \left(\bar{X}_r^2 - (Z_r-\bar{X}_r)^2\right) + N\sum_{r=1}^{q}B_r\bar{X}_r\right\}\\ &=& \exp\left\{-\frac{N\beta}{2} \sum_r Z_r^2 + N\beta \sum_r \bar{X}_r Z_r + N\sum_{r=1}^{q}B_r\bar{X}_r\right\}~.
\end{eqnarray*}}
Therefore,
$$\p_{\beta,\bm B}^{\mathrm{CW}}(\bm X|\bm Z) \propto \exp\left\{\sum_{i=1}^N \sum_{r=1}^q X_{i,r}(\beta Z_r + B_r)\right\},$$ and so given ${\bm Z}$ the random variables $(X_1,\cdots,X_N)$ are iid, with
$$\p_{\beta,\bm B}^{\mathrm{CW}}(\bm X_i=r|\bm Z)=\frac{e^{\beta Z_r+B_r}}{\sum_{s=1}^qe^{\beta Z_s+B_s}}.$$ This completes the proof of Lemma \ref{condindp}.
\end{proof}

Returning to the proof of Lemma \ref{cwresult7}, let us define for every $r\in [q]$ and $\bm x \in [q]^N$:
$$S_{N,r}(\bm x) :=  \sum_{i=1}^N \left(m_{i,r}(\bm x) - \overline{m}_r(\bm x) - \frac{1}{q}(R_i-\bar{R})\right)^2~.$$
For ease of notation, we will abbreviate $S_{N,r}(\bm X)$ by $S_{N,r}$. Then, we have:
\begin{eqnarray*}
\mathbb{E}(S_{N,r}|\bm Z) &=& \sum_{i=1}^N \mathbb{E}\left(\left(m_{i,r}(\bm X) - \overline{m}_r(\bm X) - \frac{1}{q}(R_i-\bar{R})\right)^2 \Bigg|\bm {Z}\right)\\
&\ge & \sum_{i=1}^N \mathrm{Var}\left[m_{i,r}(\bm X) - \overline{m}_r(\bm X) - \frac{1}{q}(R_i-\bar{R}) \Bigg|\bm {Z}\right]\\
&=& \sum_{i=1}^N \mathrm{Var}\left[\sum_{j=1}^N  a_{ij} X_{j,r} - \frac{1}{N} \sum_{j=1}^N R_j X_{j,r} - \frac{1}{q}\sum_{j=1}^N \left(a_{ij}-\frac{R_j}{N}\right)\Bigg|\bm Z\right]  \\
&=& \sum_{i=1}^N \mathrm{Var}\left[\sum_{j=1}^N  \left(a_{ij}-\frac{R_j}{N}\right) \left(X_{j,r}-\frac{1}{q}\right)\Bigg|\bm Z\right]\\&=& \mathrm{Var}\left(X_{1,r}| \bm Z\right)\sum_{i=1}^N \sum_{j=1}^N \left(a_{ij}-\frac{R_j}{N}\right)^2
\end{eqnarray*}
where in going from the second to the third line in the above display, we have used the following identity:
$$\overline{m}_r(\bm X) := \frac{1}{N} \sum_{i=1}^N \sum_{j=1}^N a_{ij} X_{j,r} = \frac{1}{N} \sum_{j=1}^N X_{j,r} \sum_{i=1}^N a_{ij} = \frac{1}{N} \sum_{j=1}^N R_j X_{j,r}.$$
The RHS above can be bounded below, as follows: 
$$\sum_{i=1}^N \sum_{j=1}^N \left(a_{ij}-\frac{R_j}{N}\right)^2 = \sum_{i=1}^N \sum_{j=1}^N a_{ij}^2 -\frac{1}{N} \sum_{i=1}^N R_i^2 \ge \sum_{i=1}^N\sum_{j=1}^N a_{ij}^2 - \gamma^2 =\Omega(N),$$
where the last equality uses the non mean-field condition \eqref{bddegr}.
Hence, we have shown that
\begin{align}\label{eq:mean}
\e(S_{N,r}|\bm Z)~ \ge ~\Omega(N)~\mathrm{Var}(X_{1,r}|\bm Z)~.
\end{align}

Next, we will show that for any sequence $\beta_N$ which is bounded away from $0$ and $\infty$, the quantity $\mathrm{Var}_{\beta_N,\bm B}^{\mathrm{CW}}(X_{1,r}|\bm Z)$ is bounded away from $0$. For this, it only suffices to show that $\p_{\beta_N,\bm B}^{\mathrm{CW}}(X_1=r|\bm Z)$ is bounded away from both $0$ and $1$. To this effect, Lemma \ref{condindp} gives:
$$\p_{\beta_N,\bm B}^{\mathrm{CW}}(X_1=r|\bm Z) = \frac{\exp\{\beta_N Z_r + B_r\}}{\sum_{s=1}^q \exp\{\beta_N Z_s + B_s \}}=\frac{1}{1+\sum_{s\neq r}\exp\{\beta_N(Z_s-Z_r)+B_s-B_r\}}~.$$ Define the event $\mathscr{E} := \{\bm Z \in [-2,2]^q\}$, and note that on $\mathscr{E}$ we have:
\begin{eqnarray*}
\frac{1}{1+(q-1)\exp\{4\bar{\beta} + 2\|{\bf B}\|_\infty \}} &\le& \p_{\beta_N,\bm B}^{\mathrm{CW}}(X_1=r|\bm Z)
\le  \frac{1}{1+(q-1) \exp\{-4\bar{\beta}-2\|{\bf B}\|_\infty\}}.
\end{eqnarray*}
where $\bar{\beta}<\infty$ is an upper bound of $\beta_N$. The above bound along with \eqref{eq:mean} gives:
\begin{equation*}
    \e_{\beta_N,\bm B}^{\mathrm{CW}}(S_{N,r}|\bm Z) \ge C N
\end{equation*}
on the event $\mathscr{E}$, for some constant $C>0$  not depending on $N$.
\\

We will now show, using McDiarmid's inequality, that $S_{N,r}$ concentrates around $\e_{\beta_N,\bm B}^{\mathrm{CW}}(S_{N,r}|\bm Z)$. For this,
fix two vectors $\bm x$ and $\bm x'$ in $[q]^N$ that differ exactly in the $k^{\mathrm{th}}$ coordinate. Then, we have:
\begin{eqnarray*}
   && \left|S_{N,r}(\bm x) - S_{N,r}(\bm x')\right|\\ &=& \left|\sum_{i=1}^N \left[\left(m_{i,r}(\bm x) - \overline{m}_r(\bm x) - \frac{1}{q}(R_i-\bar{R})\right)^2 - \left(m_{i,r}(\bm x') - \overline{m}_r(\bm x') - \frac{1}{q}(R_i-\bar{R})\right)^2\right]\right|\\ 
   &\le& 4\gamma \sum_{i=1}^N\left|m_{i,r}(\bm x) - \overline{m}_r(\bm x) - m_{i,r}(\bm x') + \overline{m}_r(\bm x')\right|. 
\end{eqnarray*}
Also we have:
\begin{eqnarray*}
  &&\sum_{i=1}^N \left|m_{i,r}(\bm x) - \overline{m}_r(\bm x) - m_{i,r}(\bm x') + \overline{m}_r(\bm x')\right|\\ &=& \sum_{i=1}^N\left|a_{ik}(x_{k,r}-x_{k,r}') - \frac{1}{N}\sum_{\ell=1}^N  a_{\ell k} (x_{k,r}-x_{k,r}')\right|\\&\le& \sum_{i=1}^N a_{ik} +  \sum_{\ell=1}^N a_{\ell k} \le 2\gamma .
\end{eqnarray*}
Combining the above two, we thus have:
$$\left|S_{N,r}(\bm x) - S_{N,r}(\bm x')\right| \le 8\gamma^2.$$
Hence, by McDiarmid's inequality, we have the following on the event $\mathscr{E}$:
\begin{eqnarray*}
    \p_{\beta_N,\bm B}^{\mathrm{CW}}\left(S_{N,r} <\frac{CN}{2}\Bigg | \bm Z\right)  &\le& \p_{\beta_N,\bm B}^{\mathrm{CW}}\left(S_{N,r} <\e_{\beta_N,\bm B}^{\mathrm{CW}}(S_{N,r}|\bm Z) - \frac{C N}{2}\Bigg | \bm Z\right) \\&\le&  \exp\left(-\frac{C^2 N^2}{128 N \gamma^4}\right) = \exp\left(-\frac{N C^2}{128\gamma^4}\right)
\end{eqnarray*}
Hence, we have:
\begin{eqnarray*}
\mathbb{P}_{\beta_N,\bm B}^{\mathrm{CW}}\left(S_{N,r} \le \frac{C N}{2}\right) &=& \mathbb{E}_{\beta_N,\bm B}^{\mathrm{CW}}~\mathbb{P}_{\beta_N,\bm B}^{\mathrm{CW}} \left(S_{N,r} \le \frac{C N}{2}~\Bigg|~ \bm Z\right) \\&\le& \mathbb{P}_{\beta_N,\bm B} (\mathscr{E}^c) + \mathbb{E}_{\beta_N,\bm B}^{\mathrm{CW}}~\mathbb{P}_{\beta_N,\bm B}^{\mathrm{CW}}\left(S_{N,r} \le \frac{CN}{2}~\Bigg|~ \bm Z\right)\mathbbm{1}_{\mathscr{E}}\\&\le & \mathbb{P}_{\beta_N,\bm B} (\mathscr{E}^c) + \exp\left(-\frac{N C^2}{128\gamma^4}\right).
\end{eqnarray*}
Also, we have
\begin{eqnarray*}
\mathbb{P}_{\beta_N,\bm B}^{\mathrm{CW}} (\mathscr{E}^c) &\le& q\max_{1\le r \le q} \mathbb{P}_{\beta_N,\bm B}(|Z_r| > 2) \\&=& q\max_{1\le r\le q} \mathbb{E}_{\beta_N,\bm B} ~\mathbb{P}_{\beta_N,\bm B} (|Z_r|>2 |\bm X)\\&\le & q\max_{1\le r\le q} \mathbb{E}_{\beta_N,\bm B}~\mathbb{P}_{\beta_N,\bm B} (|Z_r-\bar{X}_r|>1|\bm X)\quad(\text{since}~|\bar{X_r}|\le 1)\\&=& q\max_{1\le r\le q} \mathbb{E}_{\beta_N,\bm B}~ \mathbb{P}_{\beta_N,\bm B} \left(\sqrt{\beta_N N} |Z_r-\bar{X}_r|>\sqrt{\beta_N N}~\Big |\bm X\right)\\&=& q \mathbb{P}(|Z|>\sqrt{\beta_N N})\quad(\text{where}~Z\sim N(0,1))\\&\le& 2q e^{-\underline{\beta} N/2},
\end{eqnarray*}
where $\underline{\beta} > 0$ is a lower bound of $\beta_N$.
Combining all these, we conclude:
$$\mathbb{P}_{\beta_N,\bm B}^{\mathrm{CW}}\left(S_{N,r} \le \frac{C N}{2}\right)  \le e^{-KN}$$ for some constant $K >0$ (not depending on $N$), which invoking \eqref{TNderv} gives:
$$\mathbb{P}_{\beta_N,\bm B}^{\mathrm{CW}}\left(T_N(\bm X) < \frac{Cq}{2}\right)  \le q e^{-KN}\quad\implies\quad \mathbb{P}_{\beta_N,\bm B}^{\mathrm{CW}}\left({\bm X}\in E_N\left(\frac{Cq}{2}\right)\right) \le q e^{-KN},$$
where the set $E_N(\cdot)$ is as in \eqref{endef86}. 

To complete the proof, it suffices to verify that the
sequence $\beta_N=\beta\bar{R}$ is bounded above by $\bar{\beta}<\infty$, and  bounded below by $\underline{\beta}>0$. But this follows on using \eqref{as1} and \eqref{as2}, and recalling that $\beta>0$.


\section{Necessary results for proving Theorem \ref{jinest}}\label{sec:necth1p3}
In this section, we state and prove some lemmas which will be used to verify Theorem \ref{jinest}. 

 \begin{lem}\label{lem:concentration_potts_residual}
Let $\bm X=(X_1,\dots,X_N)\in[q]^N$ be distributed according to the Curie--Weiss Potts
measure $\p_{\beta,\bm B}^{\mathrm{CW}}$ \eqref{cwpotts1}. Let
\begin{equation*}
f_r(\bm t)=\frac{\exp\{\beta t_r+B_r\}}{\sum_{s=1}^q \exp\{\beta t_s+B_s\}},
\qquad \bm t\in\mathbb R^q,\ r\in[q]
\end{equation*}
be as in \eqref{frdef56}, and $f:\R^q\to \mathcal{P}([q])$ (see \eqref{defpqfirst6}) as $f(\bm t) := (f_1(\bm t),
\ldots, f_q(\bm t))$.
Then for every $r\in[q]$ and every $t\ge0$,
\begin{equation*}\label{eq:final_concentration}
\p_{\beta,\bm B}^{\mathrm{CW}}\!\left(\big\|\bar{\bm X}-f(\bar{\bm X})\big\|_\infty > \frac{\beta}{2N}+t\right)
\ \le\
2q\exp\!\left(-\frac{2N t^2}{2+\beta}\right).
\end{equation*}
\end{lem}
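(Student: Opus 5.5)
We will use the exchangeable-pair (Stein's method) concentration inequality of \cite[Thm 1.5]{chatterjee2007stein}, as in the proof of Lemma \ref{bp12}, applied separately to each coordinate $r\in[q]$, and finish with a union bound over $r$ (the source of the factor $q$). For the Curie--Weiss Potts measure \eqref{cwpotts1}, the coupling is $a_{ij}=N^{-1}\mathbbm{1}_{i\ne j}$. Hence $m_{i,r}(\bm X)=\bar X_r - N^{-1}X_{i,r}$, and the conditional law of $X_i$ given the rest is $\theta_{i,r}(\bm X)=f_r(\bar{\bm X}-N^{-1}\bm e_{X_i})$, where $\bm e_u$ is the $u$th unit vector. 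Since each partial derivative of $f_r$ is bounded by $\beta/4$ in absolute value (differences of products $f_rf_s$) and they sum to zero, $|\theta_{i,r}(\bm X)-f_r(\bar{\bm X})|\le \beta/(2N)$ deterministically. Therefore
\[
\bigl|\bar X_r - f_r(\bar{\bm X})\bigr|\le \frac{\beta}{2N} + \Bigl|\frac1N\sum_{i=1}^N\bigl(X_{i,r}-\theta_{i,r}(\bm X)\bigr)\Bigr|,
\]
and it suffices to prove $\p\bigl(|G_r(\bm X)|\ge Nt\bigr)\le 2\exp(-2Nt^2/(2+\beta))$, where $G_r(\bm X):=\sum_i(X_{i,r}-\theta_{i,r}(\bm X))$.

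Next, build the exchangeable pair exactly as in Lemma \ref{bp12}: choose $U$ uniform on $[N]$ and resample $X_U$ from its conditional law. Take $F(\bm X,\bm X')=X_{U,r}-X'_{U,r}$, which is antisymmetric and satisfies $\e[F\mid\bm X]=G_r(\bm X)/N$. The Chatterjee bound then requires control of
\[
\Delta(\bm X)=\frac{1}{2N}\,\e\bigl[|(G_r(\bm X)-G_r(\bm X'))F(\bm X,\bm X')|\;\big|\;\bm X\bigr].
\]
We have $|F|\le 1$. Changing coordinate $U$ changes $X_{U,r}$ by at most $1$, changes $\theta_{U,r}$ by at most $1$ (in fact $\theta_{U,r}$ does not depend on $X_U$ beyond the shift, which is $O(\beta/N)$), and changes each of the other $N-1$ terms $\theta_{j,r}$ by at most $\frac{\beta}{4}\cdot\frac{2}{N}$. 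So $|G_r(\bm X)-G_r(\bm X')|\le 1+\beta/2$ up to lower-order terms, and hence $\Delta\le (1+\beta/2)/(2N)=(2+\beta)/(4N)$.

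Applying \cite[Thm 1.5]{chatterjee2007stein} in the form $\p(|G_r/N|\ge t)\le 2\exp(-t^2/(2\Delta_{\sup}))$ with the normalization $\e[F\mid\bm X]=G_r/N$ (careful bookkeeping of $N$'s) then gives $2\exp(-2Nt^2/(2+\beta))$, and the union bound over $r$ finishes the proof.

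The main obstacle is the constant bookkeeping needed to land exactly at $2+\beta$. Two points need care: the precise Lipschitz constant of $f_r$ in $\ell^1$, where we use $\sum_s|\partial_s f_r|=2\beta f_r(1-f_r)\le\beta/2$; and verifying that the $\theta_{U,r}$ term contributes only $O(\beta/N)$, because $\theta_{U,r}$ is essentially independent of $X_U$, so the leading contribution is $1$ from $X_{U,r}$. If the constant does not match exactly, a slightly looser constant would still suffice for the downstream use in Theorem \ref{jinest}.
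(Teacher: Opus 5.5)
Your proposal is correct and follows the paper's proof essentially step for step. It uses the same Glauber exchangeable pair with $F=X_{U,r}-X'_{U,r}$, the same bound $\Delta\le(2+\beta)/(4N)$ fed into \cite[Thm 1.5]{chatterjee2007stein}, the same $\beta/(2N)$ correction from $\bar{\bm X}^{(i)}$ to $\bar{\bm X}$, and a union bound over $r$. One small correction: $\theta_{U,r}$ depends only on $(X_j)_{j\ne U}$, so it is exactly unchanged under the swap. Hence $|G_r(\bm X)-G_r(\bm X')|\le 1+\beta/2$ holds with no lower-order terms, and the constant $2+\beta$ comes out exactly, so your hedge is unnecessary.
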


\begin{proof}
To begin with, use \eqref{defcondprob881} to note that for all $i\in [N], r\in [q]$ we have:
\begin{equation}\label{eq:cond_prob}
\mathbb P(X_i=r\mid \bm X_{-i})=f_r(\bar{\bm X}^{(i)}), \text{ where }
\bar X^{(i)}_r:=\frac{1}{N}\sum_{j\ne i}X_{j,r}\text{ and }
\bar{\bm X}^{(i)}:=(\bar{ X}^{(i)}_1,\dots,\bar X^{(i)}_q).
\end{equation}
Define $\bm X'$ from $\bm X$ by choosing an index $I$ uniformly at random from $[N]$, and updating the $I^{\mathrm{th}}$ entry of $\bm X$ by a sample from the conditional distribution $\p_{\beta,\bm B}^{\mathrm{CW}}(X_I = \cdot|\bm X_{-I}) = f_{\cdot}(\bar{\bm X}^{(I)})$, keeping the other entries unchanged. It is straightforward to verify that $(\bm X, \bm X')$ is an exchangeable pair. Fix $r\in [q]$
and define the antisymmetric function
\[
F_r(\bm X, \bm Y):=N(\bar X_r-\bar Y_r),\quad\text{which gives}\quad 
F_r(\bm X,\bm X')=X_{I,r}-X_{I,r}'\in [-1,1].\]
Set $u_r(\bm X):=\mathbb E\big[F_r(\bm X,\bm X')\mid \bm X\big],$
and use the tower property to get
\begin{align}
u_r(\bm X)
=\frac1N\sum_{i=1}^N \Big[X_{i,r}
-\mathbb P(X_i=r\mid \bm X_{-i})\Big]
=\bar X_r-\frac1N\sum_{i=1}^N f_r(\bar{\bm X}^{(i)}),\label{eq:f_def}
\end{align}
where we used \eqref{eq:cond_prob} in the last equality. Next, observe that for $\bm t\in\mathbb R^q$ and $r,s\in[q]$ we have
\begin{equation}\label{Jacobian68}
\frac{\partial f_r}{\partial t_s}(\bm t)
=\beta\,f_r(\bm t)\big(\mathbbm{1}\{r=s\}-f_s(\bm t)\big).
\end{equation}
Hence
\[
\|\nabla f_r(\bm t)\|_1
=\sum_{s=1}^q\left|\frac{\partial f_r}{\partial t_s}(\bm t)\right|
=\beta f_r(\bm t)\Big(1-f_r(\bm t) +\sum_{s\ne r}f_s(\bm t)\Big)
=2\beta f_r(\bm t)(1-f_r(\bm t))\le \frac{\beta}{2}.
\]
Consequently, we have:
$$f_r(\bar{\bm X}^{(i)}) - f_r(\bar{\bm X}^{'(i)}) \le \frac{\beta}{2} \max_{s\in [q]} |\bar{X}_s^{(i)} - \bar{X}_s^{'(i)}| \le \frac{\beta}{2}\cdot \frac{1}{N} = \frac{\beta}{2N}.$$
Hence, it follows from \eqref{eq:f_def} that:
\begin{equation*}
    |u_r(\bm X) - u_r(\bm X') |\le \frac{1}{N} + \frac{\beta}{2N}.
\end{equation*}
Consequently, setting
\[
v_r(\bm X):=\frac12\,\mathbb E\Big(|u_r(\bm X)-u_r(\bm X')|\ |F_r(\bm X,\bm X')|\ \Big|\ \bm X\Big),
\]
the above display gives
$v_r(\bm X) \le \frac{2+\beta}{4N}$. Hence, by Theorem 1.5 in \cite{chatterjee2007stein} we have:

\begin{equation*}\label{eq:fr_conc}
\mathbb P_{\beta,\bm B}^{\mathrm{CW}}(|u_r(\bm X)|> t)\le 2\exp\!\left(-\frac{2N t^2}{2+\beta}\right)
\end{equation*}
Next, define $g_r(\bm X) := \bar{X}_r - f_r(\bar{\bm X})$, and use \eqref{eq:f_def} to get:
\begin{eqnarray*}
    |g_r(\bm X) - u_r(\bm X)| \le \frac{1}{N}\sum_{i=1}^N \left|f_r(\bar{\bm X}^{(i)}) - f_r(\bar{\bm X})\right|\le \frac{1}{N}\sum_{i=1}^N \frac{\beta}{2}\max_{s\in [q]} |\bar{X}_s^{(i)} - \bar{X}_s|\le \frac{\beta}{2N}~.
\end{eqnarray*}
Therefore, we have:
$$\p_{\beta,\bm B}^{\text{CW}}\left(|g_r(\bm X)| > \frac{\beta}{2N} + t\right) \le \mathbb P_{\beta,\bm B}^{\mathrm{CW}}(|u_r(\bm X)|> t)\le 2\exp\!\left(-\frac{2N t^2}{2+\beta}\right).$$
The proof of Lemma \ref{lem:concentration_potts_residual} is now complete by a further union bound.
\end{proof}  

\begin{lem}\label{inversefunc}
Suppose the following assumptions hold:
\begin{enumerate}
\item[(i)]
Let ${\bm m}\in P([q])$ (see \eqref{defpqfirst6}) be a solution to the equation $\xi({\bm t})=0$, where $\xi({\bm t}):={\bm t}-f({\bm t})$, and $f:\R^q\mapsto P([q])$ is as in \eqref{defcondprob881}.

\item[(ii)]

Setting $H = H_{\beta,\bm B} : P([q]) \to \mathbb{R}$ by
\[
H(\bm t) := \frac{\beta}{2} \sum_{r=1}^q t_r^2 
+ \sum_{r=1}^{q} B_r t_r 
- \sum_{r=1}^q t_r \log t_r,
\]
as in \eqref{Hdefn6882}, we have 
$\bm u^\top \nabla^2 H(\bm m) \bm u < 0$ for all $\bm u \in T^* \setminus \{\bf 0\}$, where
\begin{equation}\label{tstardef56}
T^* := \Big\{ \bm u \in \mathbb{R}^q : \sum_{r=1}^q u_r = 0 \Big\}.
\end{equation}
\end{enumerate}
 Then, the Jacobian operator $D\xi(\bm m)$ viewed as a linear map on the domain $T^*$, is injective. 
\end{lem}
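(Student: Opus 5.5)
We compute the Jacobian of $\xi$ at $\bm m$ explicitly, using \eqref{Jacobian68}, and then reduce injectivity on $T^*$ to the negative definiteness in assumption (ii). By \eqref{Jacobian68}, $Df(\bm t) = \beta\,(\mathrm{diag}(f(\bm t)) - f(\bm t)f(\bm t)^\top)$. Assumption (i) gives $f(\bm m)=\bm m$, so
$$D\xi(\bm m) = \bm I - \beta\,(\bm D - \bm m\bm m^\top),\qquad \bm D:=\mathrm{diag}(m_1,\ldots,m_q).$$
Since $\bm m = f(\bm m)$ lies in the open simplex, every $m_r>0$ and $\bm D$ is invertible. Note also that $D\xi(\bm m)$ maps $T^*$ into $T^*$, because $\bm 1^\top(\bm D-\bm m\bm m^\top)=\bm m^\top-\bm m^\top=\bm 0$.

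Now suppose $\bm u\in T^*$ satisfies $D\xi(\bm m)\bm u=\bm 0$, i.e.
$$\bm u = \beta \bm D\bm u - \beta(\bm m^\top\bm u)\,\bm m.$$
Multiply on the left by $\bm D^{-1}$ to get $\bm D^{-1}\bm u = \beta\bm u - \beta c\bm 1$, where $c:=\bm m^\top\bm u$. Next take the inner product with $\bm u$. Since $\bm 1^\top\bm u=0$, the term involving $c$ drops out, leaving
$$\bm u^\top(\beta\bm I - \bm D^{-1})\bm u = 0.$$
By the Remark after Theorem~\ref{jinest}, the ambient Hessian is $\nabla^2 H(\bm m)=\mathrm{diag}(\beta-m_r^{-1})=\beta\bm I-\bm D^{-1}$. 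The last display therefore reads $\bm u^\top\nabla^2H(\bm m)\bm u=0$ with $\bm u\in T^*$. Assumption (ii) then forces $\bm u=\bm 0$, so $D\xi(\bm m)$ is injective on $T^*$.

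The only step needing care is the bookkeeping that makes the rank-one term $\bm m\bm m^\top$ vanish. Dividing by $\bm D$ turns it into a multiple of $\bm 1$, which is orthogonal to $T^*$. This uses $m_r>0$, which holds because $\bm m=f(\bm m)$ and the softmax $f$ has strictly positive entries.
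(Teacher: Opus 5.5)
Your proof is correct and follows essentially the same route as the paper: both arguments show that $D\xi(\bm m)\bm u=\bm 0$ with $\bm u\in T^*$ forces $\nabla^2H(\bm m)\bm u\in\mathrm{span}\{\bm 1\}$, and then pair with $\bm u$ to contradict assumption (ii). The only difference is cosmetic. The paper writes $D\xi(\bm m)\bm u=-\Sigma(\bm m)\nabla^2H(\bm m)\bm u$ with $\Sigma(\bm m)=\mathrm{diag}(\bm m)-\bm m\bm m^\top$ and uses $\mathrm{Null}(\Sigma(\bm m))=\mathrm{span}\{\bm 1\}$, whereas you multiply by $\mathrm{diag}(\bm m)^{-1}$ directly, which spares you that null-space computation.
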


\begin{proof}
Since $\bm m=f(\bm m)$ (assumption (i)), using \eqref{Jacobian68} the Jacobian of $f$ at $\bm m$ is given by
\[
\frac{\partial f_r}{\partial t_s}(\bm m)
= \beta m_r(\mathbf 1\{r=s\} - m_s),
\]
which can be written in matrix form as
\[
Df(\bm m) = \beta(\operatorname{diag}(\bm m) - \bm m \bm m^\top) =: \beta \Sigma(\bm m)\Rightarrow
D \xi(\bm m) = I - Df(\bm m) = I - \beta \Sigma(\bm m).
\]
On the other hand, for $\bm t\in P([q])$ with strictly positive coordinates, the Hessian of $H$ at ${\bm m}$ is given by
\[
\nabla^2 H(\bm m) = \beta I - \operatorname{diag}(1/m_1,\dots,1/m_q).
\]

Now, for any $\bm u \in \mathbb{R}^q$ we have:
\[
\bm u^\top \Sigma(\bm m) \bm u
= \sum_{r=1}^q m_r u_r^2 
- \Big(\sum_{r=1}^q m_r u_r \Big)^2
= \frac12 \sum_{r,s=1}^q m_r m_s (u_r - u_s)^2 \ge 0.
\]
Thus $\Sigma(\bm m)$ is positive semidefinite. Moreover,
$\bm u^\top \Sigma(\bm m) \bm u = 0$ if and only if $u_r=u_s$ for all $r,s$, which implies that the null space of $\Sigma(\bm m)$ is $\mathrm{span}\{\bm 1\}$.
Consequently, the map $\bm u \mapsto \Sigma(\bm m)\bm u$ restricted to
$T^*$ is invertible.
\\

Now, suppose that $\bm u \in T^*$. Then
\[
\Sigma(\bm m)\operatorname{diag}(1/\bm m)\bm u
=
(\operatorname{diag}(\bm m) - \bm m \bm m^\top)\operatorname{diag}(1/\bm m)\bm u
=
\bm u - \bm m \sum_{r=1}^q u_r
= \bm u,
\]
since $\sum_r u_r=0$. Using the explicit form of $\nabla^2 H(\bm m)$, this yields
\begin{eqnarray*}
    \Sigma(\bm m)\nabla^2 H(\bm m)\bm u
&=&
\Sigma(\bm m)\big(\beta I - \operatorname{diag}(1/\bm m)\big)\bm u\\
&=&
\beta \Sigma(\bm m)\bm u - \bm u\\
&=&
-(I - \beta \Sigma(\bm m))\bm u
=
- D \xi(\bm m)\bm u.
\end{eqnarray*}

Therefore, for all $\bm u \in T^*$,
\begin{equation*}
D\xi(\bm m)\bm u = - \Sigma(\bm m)\nabla^2 H(\bm m)\bm u. 
\end{equation*}


Now, suppose that $D \xi(\bm m)\bm u = \bf 0$ for some $\bm u \in T^*$. Then, the above display gives $\nabla^2H(\bm m) \bm u \in \mathrm{Null}(\Sigma(\bm m))$, which implies that $\nabla^2H(\bm m) \bm u = c\boldsymbol{1}$ for some constant $c$. Hence, $\bm u^\top \nabla^2H(\bm m) \bm u = 0$, which by assumption (ii) gives $\bm u = \bf 0$. Hence, $D\xi(\bm m)$ is injective on the domain $T^*$.
This completes the proof of Lemma \ref{inversefunc}.
\end{proof}

\begin{lem}\label{concxbarcwpotts78}
Assume the setting of the Curie--Weiss Potts model defined in \eqref{cwpotts1}. 
Suppose that $H_{\beta,\bm B}$ (as in \eqref{Hdefn6882}) admits a unique global maximizer $\bm m\in\mathcal P([q])$ (cf.\ Theorem~\ref{jinest}). 
Then for every $\delta>0$,
\[
\limsup_{N\to\infty}\frac1N
\log \p^{\mathrm{CW}}_{\beta,\bm B}
\big(\|\bar{\bm X}-\bm m\|_\infty\ge\delta\big)<0.
\]
In particular, 
\[
\bar{\bm X}\xrightarrow{P}\bm m.
\]
\end{lem}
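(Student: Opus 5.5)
The plan is to prove the large deviation bound by a direct counting argument: the Curie--Weiss Potts measure depends on $\bm x$ only through the empirical vector $\bar{\bm x}\in\mathcal P([q])$, and this vector takes at most $(N+1)^q$ values. For each attainable $\bm v$ (with $N\bm v$ integral), the number of configurations with $\bar{\bm x}=\bm v$ is the multinomial coefficient $\binom{N}{N v_1,\ldots,N v_q}$. The elementary bounds
\[
(N+1)^{-q}e^{-N\sum_r v_r\log v_r}\le \binom{N}{Nv_1,\ldots,Nv_q}\le e^{-N\sum_r v_r\log v_r}
\]
then give
\[
\p^{\mathrm{CW}}_{\beta,\bm B}(\bar{\bm X}=\bm v)=\frac{1}{Z_N^{\mathrm{CW}}(\beta,\bm B)}\binom{N}{N\bm v}e^{N(\frac{\beta}{2}\sum_r v_r^2+\sum_r B_rv_r)}\le \frac{e^{NH_{\beta,\bm B}(\bm v)}}{Z_N^{\mathrm{CW}}(\beta,\bm B)} .
\]

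The next step is a lower bound on $Z_N^{\mathrm{CW}}$. Take a lattice point $\bm v_N\in\mathcal P([q])$ with $\|\bm v_N-\bm m\|_\infty\le q/N$. Keeping only the configurations with $\bar{\bm x}=\bm v_N$ gives
\[
Z_N^{\mathrm{CW}}\ge (N+1)^{-q}e^{NH_{\beta,\bm B}(\bm v_N)}.
\]
Since $H_{\beta,\bm B}$ is continuous on the compact set $\mathcal P([q])$ (the function $t\log t$ extends continuously to $t=0$), we have $H_{\beta,\bm B}(\bm v_N)\to H_{\beta,\bm B}(\bm m)$. Hence $\log Z_N^{\mathrm{CW}}\ge NH_{\beta,\bm B}(\bm m)-o(N)$. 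This is consistent with equation (2.3) of \cite{mukherjeebasak} quoted earlier, which could alternatively be invoked directly.

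Now sum over the lattice points $\bm v$ with $\|\bm v-\bm m\|_\infty\ge\delta$:
\[
\p^{\mathrm{CW}}_{\beta,\bm B}(\|\bar{\bm X}-\bm m\|_\infty\ge\delta)\le (N+1)^{2q}\exp\Big\{N\big(\sup_{K_\delta}H_{\beta,\bm B}-H_{\beta,\bm B}(\bm m)\big)+o(N)\Big\},
\]
where $K_\delta:=\{\bm v\in\mathcal P([q]):\|\bm v-\bm m\|_\infty\ge\delta\}$. The set $K_\delta$ is compact; if it is empty there is nothing to prove. Otherwise the continuous function $H_{\beta,\bm B}$ attains its supremum on $K_\delta$ at some point distinct from $\bm m$. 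Because $\bm m$ is the unique global maximizer, this supremum is strictly less than $H_{\beta,\bm B}(\bm m)$; write the gap as $\eta>0$. The polynomial factor is $e^{o(N)}$, so the $\limsup$ of $N^{-1}\log$ of the probability is at most $-\eta<0$.

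Convergence in probability, $\bar{\bm X}\xrightarrow{P}\bm m$, follows immediately since the probabilities are exponentially small for every $\delta$. The only mildly delicate points are the boundary of the simplex, which continuity of $t\log t$ at $0$ handles, and the lattice approximation of $\bm m$ in the lower bound on $Z_N^{\mathrm{CW}}$; neither is a serious obstacle. The non-degeneracy of the Hessian is not needed for this statement.
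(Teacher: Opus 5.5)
Your proof is correct and takes essentially the same method-of-types approach as the paper. Both arguments bound each type class by $e^{NH_{\beta,\bm B}(\bm v)}$ up to $e^{o(N)}$ factors, absorb the polynomially many types, lower-bound the normalizer through a lattice approximation of $\bm m$, and get a strict gap from compactness of $\{\|\bm v-\bm m\|_\infty\ge\delta\}$ and uniqueness of the maximizer; the only difference is that you prove the multinomial bounds and the lattice approximation directly, where the paper cites Lemmas S.4.1 and S.4.3 of \cite{snbh22}.
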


\begin{proof}
To begin with, define:
\[
\mathcal P_{q,N}
:=
\left\{
\bm v\in\mathcal P([q]) :
Nv_r\in\mathbb Z \text{ for all } r
\right\},
\]
where $\mathbb Z$ denotes the set of all integers. For $\bm v\in\mathcal P_{q,N}$ define 
\[
A_N(\bm v)
=
\{\bm x\in[q]^N : \bar {\bm x}= \bm v\}.
\]
Then for any Borel set $G\subset\mathbb R^q$,
\begin{equation}\label{eqa1cp}
\p^{\mathrm{CW}}_{\beta,\bm B}(\bar {\bm X}\in G)
=
\frac{
\sum_{\bm v\in\mathcal P_{q,N}\cap G}
|A_N(\bm v)|
\exp\!\left\{
N\left(
\frac{\beta}{2}\sum_{r=1}^q v_r^2
+
\sum_{r=1}^q B_r v_r
\right)
\right\}
}{
\sum_{\bm v\in\mathcal P_{q,N}}
|A_N(\bm v)|
\exp\!\left\{
N\left(
\frac{\beta}{2}\sum_{r=1}^q v_r^2
+
\sum_{r=1}^q B_r v_r
\right)
\right\}
}.
\tag{A.1}
\end{equation}
By Lemma S.4.1 in \cite{snbh22},
\[
|A_N(\bm v)|
=
\exp\!\left\{
-N\sum_{r=1}^q v_r\log v_r
\right\}
\cdot e^{o(N)}
\]
uniformly over $\bm v\in\mathcal P_{q,N}$.
Substituting this estimate for $|A_N(\bm v)|$ into \eqref{eqa1cp} and using the fact that $|\mathcal P_{q,N}| \le (N+1)^q = e^{o(N)}$, we have:
\begin{equation}\label{tgr666}
   \p^{\mathrm{CW}}_{\beta,\bm B}(\bar {\bm X}\in G)
\le
e^{o(N)}
\frac{
\sup_{\bm v\in\mathcal P_{q,N}\cap G}
e^{N H_{\beta,\bm B}(\bm v)}
}{
\sup_{\bm v\in\mathcal P_{q,N}}
e^{N H_{\beta,\bm B}(\bm v)}
} 
\end{equation}
Now, note that
\begin{equation}\label{tgr255}  
\sup_{\bm v\in \mathcal P_{q,N}}
H_{\beta,\bm B}(\bm v)
\le
\sup_{\bm v\in \mathcal P([q])}
H_{\beta,\bm B}(\bm v).
\end{equation}
On the other hand, if $\bm m$ is the unique maximizer of
$H_{\beta,B}$ on $\mathcal P([q])$, then
by Lemma S.4.3 in \cite{snbh22}, there exists a sequence
$\bm v_N \in \mathcal P_{q,N}$
such that $\bm v_N \to \bm m$.
By continuity of $H_{\beta,B}$, we have:
\begin{equation}\label{tgr256}
\sup_{\bm v\in \mathcal P_{q,N}}
H_{\beta,\bm B}(\bm v)
\ge
H_{\beta,\bm B}(\bm v_N)
=
H_{\beta,\bm B}(\bm m)
=
\sup_{\bm v\in \mathcal P([q])}
H_{\beta,\bm B}(\bm v).
\end{equation}
Combining \eqref{tgr255} and \eqref{tgr256}, we have:
\[
\sup_{\bm v\in \mathcal P_{q,N}}
H_{\beta,\bm B}(\bm v)
\longrightarrow
\sup_{\bm v\in \mathcal P([q])}
H_{\beta,\bm B}(\bm v)
\qquad \text{as } N\to\infty.
\]
Hence, we have the following from \eqref{tgr666}:
\begin{equation}\label{tgr777}
    \limsup_{N\rightarrow \infty} \frac{1}{N} \log \mathbb \p^{\mathrm{CW}}_{\beta,\bm B}(\bar {\bm X}\in G) \le \sup_{\bm v \in \mathcal P([q]) \bigcap G}  H_{\beta,\bm B}(\bm v) - \sup_{\bm v \in \mathcal P([q])} H_{\beta,\bm B}(\bm v).
\end{equation}
Choosing $G := \{\bm v\in \mathcal P([q]) : \|\bm v - \bm m\|_\infty \ge \delta\}$, the RHS of \eqref{tgr777} equals a negative constant $C_\varepsilon$ (since $\bm m$ is the unique maximizer of $H_{\beta,\bm B}$ and $\bm m \notin G$). This completes the proof of Lemma \ref{concxbarcwpotts78}.
\end{proof}


\begin{lem}\label{c222}
Suppose that $\bm X$ is sampled from the Curie-Weiss Potts model \eqref{cwpotts1} and conditional on $\bm X$, let $Z_1,\ldots,Z_q$ be independent random variables with $Z_r \sim N(\bar{X}_r, (\beta N)^{-1})$. Let $\bm Z := (Z_1,\ldots,Z_q)$ and $\bar{\bm X} := (\bar{X}_1,\ldots,\bar{X}_q)$. 
\begin{enumerate}
\item[(i)] Suppose that  $(\beta,\bm B) \in (0,\infty)\times \R^{q-1}$ is such that the function $$H_{\beta,\bm B}(\bm t) := \frac{\beta}{2} \sum_{r=1}^q t_r^2 + \sum_{r=1}^{q} B_r t_r - \sum_{r=1}^q t_r \log t_r$$ (as defined in \eqref{Hdefn6882}) has a unique global maximizer $\bm m$ on the set $\mathcal{P}([q])$.

\item[(ii)] Suppose further that the quadratic form $\bm u^\top \nabla^2 H_{\beta,\bm B}(\bm m) \bm u$ is strictly negative for all $\bm u \in T := \{\bm u \in \R^q\setminus \{\boldsymbol{0}\}: \sum_{r=1}^q u_r = 0\}$.
\end{enumerate}
Then both the random variables
$\sqrt{N}(\bar{\bm X} - \bm m)$  and  $\sqrt{N}(\bm Z - \bm m)$ are tight.
\end{lem}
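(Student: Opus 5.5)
The plan has three steps. First, show consistency of $\bar{\bm X}$, then linearize the mean-field equation $\bar{\bm X}\approx f(\bar{\bm X})$ around $\bm m$, and invert the linearization on the tangent space $T^*$ using Lemma \ref{inversefunc}. Finally, transfer the result to $\bm Z$ through its Gaussian conditional law.

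\emph{Step 1 (consistency and fixed point).} By assumption (i) and Lemma \ref{concxbarcwpotts78}, $\bar{\bm X}\to\bm m$ in probability, in fact with exponentially small tail probabilities for $\|\bar{\bm X}-\bm m\|_\infty\ge\delta$. Since $\bm m$ is the global maximizer of $H_{\beta,\bm B}$ on $\mathcal P([q])$ and $\partial H/\partial t_r=\beta t_r+B_r-\log t_r-1$ blows up at the boundary, $\bm m$ lies in the interior. The Lagrangian condition then gives $\beta m_r+B_r-\log m_r=$ const, that is, $\bm m=f(\bm m)$. Hence $\xi(\bm m)=0$ with $\xi(\bm t)=\bm t-f(\bm t)$, and hypothesis (i) of Lemma \ref{inversefunc} holds. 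Hypothesis (ii) there is exactly assumption (ii) here. Consequently $D\xi(\bm m)=I-\beta\Sigma(\bm m)$ is injective on $T^*$. It also maps $T^*$ into $T^*$, since $\bm 1^\top\Sigma(\bm m)=0$. So it is invertible on $T^*$, and there is $c>0$ with $\|D\xi(\bm m)\bm u\|\ge c\|\bm u\|$ for all $\bm u\in T^*$.

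\emph{Step 2 (linearization).} By Lemma \ref{lem:concentration_potts_residual} with $t=K/\sqrt N$, we get $\|\xi(\bar{\bm X})\|_\infty=O_\p(N^{-1/2})$. Note that $\bar{\bm X}-\bm m\in T^*$, since both vectors lie in $\mathcal P([q])$. A Taylor expansion gives
\[
\xi(\bar{\bm X})=D\xi(\bm m)(\bar{\bm X}-\bm m)+O(\|\bar{\bm X}-\bm m\|^2),
\]
because $f$ is smooth with bounded second derivatives. On the event $\|\bar{\bm X}-\bm m\|\le c/(2C)$, where $C$ bounds the Hessian of $f$, the quadratic remainder is at most $\tfrac c2\|\bar{\bm X}-\bm m\|$. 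Combined with the lower bound from Step 1, this yields $\tfrac c2\|\bar{\bm X}-\bm m\|\le\|\xi(\bar{\bm X})\|=O_\p(N^{-1/2})$. By Step 1 this event has probability tending to one, so $\sqrt N(\bar{\bm X}-\bm m)$ is tight. This could equivalently be phrased through the inverse function theorem: $\xi$ restricted to $\bm m+T^*$ is a local diffeomorphism with Lipschitz inverse near $0$.

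\emph{Step 3 (transfer to $\bm Z$).} Conditionally on $\bm X$, the vector $\sqrt N(\bm Z-\bar{\bm X})$ is $N(0,\beta^{-1}I_q)$, so it is tight unconditionally. Writing $\sqrt N(\bm Z-\bm m)=\sqrt N(\bm Z-\bar{\bm X})+\sqrt N(\bar{\bm X}-\bm m)$ then finishes the proof.

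The main obstacle is Step 2. Two points need care there. One must check that the linearization is inverted only on $T^*$, since $D\xi(\bm m)$ may fail to be injective on all of $\R^q$, and that $\bar{\bm X}-\bm m$ genuinely lies in $T^*$. One must also control the quadratic remainder uniformly, which is done by using the consistency from Lemma \ref{concxbarcwpotts78} before the rate is extracted.
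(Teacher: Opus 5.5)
Your proposal is correct and follows essentially the paper's argument: consistency from Lemma \ref{concxbarcwpotts78}, invertibility of $D\xi(\bm m)$ on $T^*$ via Lemma \ref{inversefunc}, the residual bound $\|\xi(\bar{\bm X})\|=O_\p(N^{-1/2})$ from Lemma \ref{lem:concentration_potts_residual}, and the Gaussian transfer to $\bm Z$. The only difference is that you control the linearization with a second-order Taylor remainder at $\bm m$, whereas the paper uses the inverse function theorem with a uniform lower bound on $D\xi$ over a neighborhood; your version is a slightly more elementary and cleaner way to obtain the same inequality $\|\bar{\bm X}-\bm m\|\lesssim\|\xi(\bar{\bm X})\|$.
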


\begin{proof}
Set $f_r:P([q])\mapsto \R$  as in \eqref{frdef56}, setting $f=(f_1,\cdots,f_q)$, and
 $\xi(\bm t) = \bm t-f(\bm t)$ as in the statement of Lemma \ref{inversefunc}. We now claim that there exists open sets $U\subseteq\R^q$ containing ${\bm m}$, and $V\subseteq \R^q$ containing $\xi({\bm m})={\bm 0}$, such that the function $\xi:U\cap P([q])\mapsto V\cap T^*$ is invertible, with $$\varepsilon:=\inf_{{\bm u}\in U\cap P([q])}\inf_{{\bm v}\in T^*:\|{\bm v}\|_2=1} \|D\xi({\bm u}){\bm v}\|_2>0, $$
where $D\xi({\bm u})$ is the Jacobian of the function $\xi:\R^q\mapsto \R^q$ at ${\bm u}\in \R^q$.
\\

We now complete the proof of the lemma, deferring the proof of the claim. By Lemma \ref{concxbarcwpotts78} we have $\p(\bar{\bm X}\in U)\to 1$, as ${\bm m}\in U$, and on the set $\bar{\bm X}\in U$ we have
\begin{eqnarray*}\label{tightrnxbmmbh}
   \|\sqrt{N}(\bar{\bm X}-\bm m)\|_2 &= &   \left\|\sqrt{N}\left(\xi^{-1}(\xi(\bar{\bm X}))-\xi^{-1}(\boldsymbol{0})\right)\right\|_2\nonumber\\
&\le& \sqrt{N} \|\xi(\bar{\bm X})\|_2  ~\sup_{\bm v \in V}\|D\xi^{-1}(\bm v)\|_2\nonumber\\
&=& \sqrt{N} \|\xi(\bar{\bm X})\|_2~\sup_{\bm u \in U} \|(D\xi(\bm u)|_{T^*})^{-1}\|_2 \le \varepsilon^{-1}\sqrt{N} \|\xi(\bar{\bm X})\|_2.
\end{eqnarray*}
Since $\sqrt{N}\|\xi(\bar{\bm X})\|_2=O_p(1)$ by Lemma \ref{lem:concentration_potts_residual}, tightness of $\sqrt{N}(\bar{\bm X}-{\bar m})$ follows. Tightness of $\sqrt{N}({\bm Z}-{\bm m})$ follows on noting that
$$\Big(\sqrt{N}({\bm Z}-{\bm m})|{\bm X}\Big)\sim N\Big(\sqrt{N}(\bar{\bm X}-{\bm m}), \beta^{-1}\Big).$$

It thus remains to verify the claim, for which we will invoke the Inverse function theorem. We break the proof into the following steps:

\begin{itemize}
    \item {\it $ {\bm m}$ satisfies $\xi({\bm m})={\bm 0}$}.

   A Lagrangian argument gives that the global maximizer $\bm m$ of the function $H_{\beta,{\bm B}}(\cdot)$ satisfies the fixed point equation
\begin{equation}\label{fixedpoint6742}
    m_r = \frac{e^{\beta m_r + B_r}}{\sum_{s=1}^q e^{\beta m_s + B_s}} = f_r(\bm m)~,
\end{equation}
and so we have ${\bm m}=f({\bm m})$, i.e. $\xi({\bm m})={\bm 0}$.
\\

\item {\it $\xi$ maps $P([q])$ into $T^*$, where $T^*$ is as in \eqref{tstardef56}}.

Since $f({\bm t})\in \mathcal{P}([q])$ for all $\bm t \in \mathcal{P}([q])$, we have $\sum_{r=1}^q \xi_r(\bm t)=0$, and hence
$$\xi(\bm t) \in T^* = \{\bm u \in \R^q : \sum_{r=1}^q u_r = 0\}.$$

\item 
{\it  For $\bm u\in \mathcal P([q])$, the  Jacobian operator $D\xi({\bm u})$ induces a linear map}
$$D\xi(\bm u)|_{T^*}:T^*\to T^*.$$

To see this, use \eqref{Jacobian68} to note that for any ${\bm u}\in \R^q$ we have $$D\xi(\bm u) = \bm I -\beta (\mathrm{diag}(\bm f) - \bm f \bm f^\top)$$ where $\bm f := f(\bm u) \in \mathcal{P}([q])$, and hence,
$$\boldsymbol{1}^\top D\xi(\bm u) = \boldsymbol{1}^\top - \beta (\bm f^\top - \boldsymbol{1}^\top \bm f \bm f^\top) = \boldsymbol{1}^\top,$$
which implies that for $\bm v \in T^*$, one has $\boldsymbol{1}^\top D\xi(\bm u) \bm v = \boldsymbol{1}^\top \bm v = 0$. So, $D\xi(\bm u)|_{T^*}$ indeed maps $T^*$ to $T^*$.
\\

\item{\it Completing the proof}

By Lemma \ref{inversefunc}, the linear map $D\xi(\bm m)|_{T^*}:T^*\to T^*$ is injective, which along with the rank-nullity theorem and the previous step shows $D\xi(\bm m)|_{T^*}:T^*\to T^*$ is invertible. By continuity of $D\xi(\bm u)|_{T^*}$ in $\bm u$, there exists a non-empty neighborhood $U'\subseteq \R^q$ of $\bm m$ such that
$$\eps':=\inf_{\bm u\in U'\cap P([q])}\inf_{{\bm v}\in T^*:\|{\bm v}\|_2=1}\|D \xi(\bm u)\bm v\| >0,$$
 By the inverse function theorem applied to the map $\xi:\mathcal P([q])\to T^*$, there exist non-empty neighborhoods $U\subseteq U'$ of $\bm m$ and $V\subseteq T^*$ of $\xi(\bm m) = \boldsymbol{0}$ such that the restriction $\xi : U\cap P([q])\to V\cap T^*$ is a bijection (hence, invertible). This verifies the claim, and hence completes the proof of the Lemma.
\end{itemize}
\end{proof}

\begin{lem}\label{contg7}
   Let $(\beta,\bm B) \in (0,\infty)\times \R^{q-1}$ be such that:
   
   \begin{enumerate}
       \item[(i)] The function $H_{\beta,\bm B}(\bm t) = \frac{\beta}{2} \sum_{r=1}^q t_r^2 + \sum_{r=1}^{q} B_r t_r - \sum_{r=1}^q t_r \log t_r$ (as defined in \eqref{Hdefn6882}) has the unique global maximizer $\bm m$ on the set $\mathcal{P}([q])$,

       \item[(ii)] $\bm u^\top \nabla^2 H(\bm m) \bm u<0\text{  for all }\bm u \in T = \{\bm u \in \R^q\setminus\{{\bf 0}\}: \sum_{r=1}^q u_r = 0\}.$
       \end{enumerate}
   Then, the product measure $\bm m^N := \otimes_{i=1}^N \bm m$ is contiguous to the Curie-Weiss Potts measure $\p_{\beta,\bm B}^{\mathrm{CW}}$.
\end{lem}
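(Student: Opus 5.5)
The idea is to write the likelihood ratio between $\bm m^N$ and $\p^{\mathrm{CW}}_{\beta,\bm B}$ explicitly. Both densities depend on $\bm x$ only through $\bar{\bm x}$. This reduces contiguity to a tightness statement about the empirical vector, which Lemma~\ref{c222} supplies. We have
\[
\frac{d\bm m^N}{d\p^{\mathrm{CW}}_{\beta,\bm B}}(\bm x)=Z_N^{\mathrm{CW}}(\beta,\bm B)\exp\Big\{N\sum_r \bar x_r\log m_r-\frac{N\beta}{2}\sum_r\bar x_r^2-N\sum_r B_r\bar x_r\Big\}.
\]
By the fixed point equation \eqref{fixedpoint6742}, $\log m_r=\beta m_r+B_r-\log \mathcal Z$ with $\mathcal Z:=\sum_s e^{\beta m_s+B_s}$. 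Substituting and completing the square gives the exponent
\[
-\tfrac{N\beta}{2}\|\bar{\bm x}-\bm m\|_2^2+\tfrac{N\beta}{2}\|\bm m\|_2^2-N\log\mathcal Z .
\]
(The linear terms cancel because $\sum_r\bar x_r=1$.) Hence the ratio equals $c_N\exp\{-\frac{\beta}{2}\|\sqrt N(\bar{\bm x}-\bm m)\|_2^2\}$, where the constant is $c_N:=Z_N^{\mathrm{CW}}e^{N(\beta\|\bm m\|^2/2-\log\mathcal Z)}$.

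We use the standard criterion (Le Cam's first lemma): $\bm m^N\triangleleft \p^{\mathrm{CW}}$ holds provided the likelihood ratio $L_N=d\bm m^N/d\p^{\mathrm{CW}}$ is tight under $\bm m^N$ (equivalently, every subsequential weak limit of $d\p^{\mathrm{CW}}/d\bm m^N$ under $\bm m^N$ puts no mass at $0$). Under $\bm m^N$, the CLT gives that $\sqrt N(\bar{\bm X}-\bm m)$ is tight, so the exponential factor stays bounded away from $0$ and from $\infty$ in probability. It therefore suffices to show that $c_N=O(1)$.

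To control $c_N$, take expectations under $\p^{\mathrm{CW}}$: since $\e_{\p^{\mathrm{CW}}}L_N=1$, we get
\[
c_N^{-1}=\e_{\p^{\mathrm{CW}}}\exp\big\{-\tfrac{\beta}{2}\|\sqrt N(\bar{\bm X}-\bm m)\|_2^2\big\}.
\]
By Lemma~\ref{c222}, which uses assumptions (i) and (ii) exactly as stated, $\sqrt N(\bar{\bm X}-\bm m)$ is tight under $\p^{\mathrm{CW}}$. Hence there is $K$ with $\p^{\mathrm{CW}}(\|\sqrt N(\bar{\bm X}-\bm m)\|\le K)\ge 1/2$ for all $N$, which gives $c_N^{-1}\ge \tfrac12 e^{-\beta K^2/2}$. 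So $c_N$ is bounded, $L_N=O_{\p}(1)$ under $\bm m^N$, and contiguity follows.

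The main obstacle is bookkeeping in the completion of the square. The simplex constraint $\sum_r\bar x_r=\sum_r m_r=1$ must kill the linear term precisely, and that cancellation relies on the fixed point identity at the unique maximizer $\bm m$, which Lemma~\ref{c222} already established. No lower bound on $c_N$ is needed, since contiguity only requires the ratio to be tight from above under $\bm m^N$.
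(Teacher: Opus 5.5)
Your proof is correct, and it takes a different route from the paper's.

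\textbf{The paper's route.} The paper introduces the auxiliary Gaussian vector $\bm Z$ (Lemma \ref{condindp}). Given $\bm Z$, the coordinates of $\bm X$ are i.i.d.\ with marginal $f(\bm Z)=\bm m+\bm W/\sqrt N$. The paper then compares the conditional likelihoods $\mathbb Q(\bm X\mid\bm Z)$ and $\p(\bm X\mid\bm Z)$ by Taylor-expanding $\log(1+W_r/(m_r\sqrt N))$. This is done on the event where $|W_r|\le K$ and $|T_{N,r}-Nm_r|\le K\sqrt N$, using tightness of $\sqrt N(\bm Z-\bm m)$ from Lemma \ref{c222} and the CLT under $\bm m^N$. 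The proof finishes by letting $N\to\infty$ and then $K\to\infty$.

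\textbf{Your route.} You use the fact that both measures are exact functions of $\bar{\bm x}$. The fixed-point identity turns $d\p^{\mathrm{CW}}_{\beta,\bm B}/d\bm m^N$ into an exact quadratic tilt, proportional to $e^{\frac{N\beta}{2}\|\bar{\bm x}-\bm m\|_2^2}$. Everything then reduces to the single scalar $c_N$. That scalar is controlled by $\e_{\p^{\mathrm{CW}}} L_N=1$ together with tightness of $\sqrt N(\bar{\bm X}-\bm m)$ under $\p^{\mathrm{CW}}_{\beta,\bm B}$.

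\textbf{What your route buys.} Because $L_N\le c_N\le C$ uniformly in $N$ and $\bm x$, you get the domination $\bm m^N(A)\le C\,\p^{\mathrm{CW}}_{\beta,\bm B}(A)$ for every $A$. This needs no Le Cam lemma, no CLT under $\bm m^N$, and no auxiliary randomization. Moreover $c_N\ge 1$, since $c_N^{-1}$ is the expectation of a quantity at most $1$. So $L_N\ge e^{-\frac{\beta}{2}\|\sqrt N(\bar{\bm X}-\bm m)\|_2^2}$, and the same tightness under $\p^{\mathrm{CW}}_{\beta,\bm B}$ also gives the reverse contiguity, i.e.\ mutual contiguity.

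\textbf{What the paper's route buys.} The conditioning argument is less tied to the exact algebraic form of the Curie--Weiss Hamiltonian. It only needs the conditional colour probabilities to lie within $O_\p(N^{-1/2})$ of $\bm m$. For the present statement, however, your argument is shorter and gives a sharper conclusion.

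Both proofs rely on Lemma \ref{c222}: yours on its $\bar{\bm X}$ part, the paper's on its $\bm Z$ part. So hypotheses (i)--(ii) enter at the same point in each.
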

\begin{proof}
Suppose that $\bm X$ is sampled from the Curie-Weiss Potts model \eqref{cwpotts1} and conditional on $\bm X$, let $Z_1,\ldots,Z_q$ be independent random variables with $Z_r \sim N(\bar{X}_r, (\beta N)^{-1})$. Let $\bm Z := (Z_1,\ldots,Z_q)$ and $\bar{\bm X} := (\bar{X}_1,\ldots,\bar{X}_q)$. Also, let $\p$ denote the joint distribution of $\bm X$ and $\bm Z$. By Lemma \ref{condindp} and \eqref{frdef56}, we have $\p(X_i=r|\bm Z) =  f_r(\bm Z)$, where
$$f_r(\bm t) :=\frac{e^{\beta t_r + B_r}}{\sum_{s=1}^q e^{\beta t_s + B_s}}. $$

Next, define 
\begin{equation}\label{pxsp76}
W_r := \sqrt{N}(f_r(\bm Z) - f_r(\bm m))~.
\end{equation}
Note that $\|\nabla f_r(\cdot)\|_\infty<\infty$ and $\sqrt{N}(\bm Z-\bm m)$ is tight (by Lemma \ref{c222}), and hence  $W_r = O_\p(1)$. Since $m_r = f_r(\bm m)$ (see \eqref{fixedpoint6742}), we get the following from \eqref{pxsp76}:

$$\p(X_i=r|\bm Z) = m_r + \frac{W_r}{\sqrt{N}}~.$$

Next, define a product measure $\mathbb{Q}$ on $[q]^N\times \mathbb{R}^q$ as
$\mathbb{Q} := \bm m^N \otimes \mu$, where $\mu$ denotes the marginal distribution of $\bm Z$. Setting $T_{N,r} := \left|\{i\in [N]:~X_i = r\}\right|$, the standard central limit theorem gives the following under $\mathbb{Q}$:
$$\frac{T_{N,r} - Nm_r}{\sqrt{N}} = O_{\mathbb{Q}}(1)~.$$ Now, for every $K > 0$, on the intersection of the events $|W_r|\le K$ and $|T_{N,r}-Nm_r|\le K\sqrt{N}$ for all $r\in [q]$, we have:
\begin{eqnarray*}
    \log\frac{\mathbb{Q}(\bm X|\bm Z)}{\p(\bm X|\bm Z)}&=& -\sum_{r=1}^q T_{N,r} \log \left(1+ \frac{W_r}{m_r\sqrt{N}}\right)\\&=& -\sum_{r=1}^q T_{N,r}\left(\frac{W_r}{m_r\sqrt{N}} + O\left(\frac{K^2}{N}\right)\right)\\&=& -\sum_{r=1}^q \frac{T_{N,r} W_r}{m_r\sqrt{N}} + O(K^2)\\&=& \sum_{r=1}^{q-1} W_r\left(\frac{T_{N,q}}{m_q\sqrt{N}} - \frac{T_{N,r}}{m_r\sqrt{N}}\right) + O(K^2)\quad(\text{since}~\sum_{r=1}^q W_r = 0)\\&\le& \sum_{r=1}^{q-1} |W_r| \left(\frac{K}{m_r}+\frac{K}{m_q}\right) + O(K^2)\\&\le& K^2\sum_{r=1}^q \left(\frac{1}{m_r}+\frac{1}{m_q}\right) + O(K^2)=: \phi_K.
\end{eqnarray*}
Thus, if $A_N\subseteq [q]^N$ is a sequence of sets such that $\p(\bm X \in A_N) \rightarrow 0$ as $N \rightarrow \infty$, then:
\begin{eqnarray*}
    && \mathbb{Q}\left(\bm X \in A_N, |W_r|\le K ~\forall r\in [q], |T_{N,r} - N m_r| \le K\sqrt{N} ~\forall r\in [q]\right)\\&=& \e \mathbb{Q}\left(\bm X \in A_N, |W_r|\le K ~\forall r\in [q], |T_{N,r} - N m_r| \le K\sqrt{N} ~\forall r\in [q] \Big|\bm Z\right)\\&\le& e^{\phi_K}\e \mathbb{\p}\left(\bm X \in A_N, |W_r|\le K ~\forall r\in [q], |T_{N,r} - N m_r| \le K\sqrt{N} ~\forall r\in [q] \Big|\bm Z\right)\\&=& e^{\phi_K}\mathbb{\p}\left(\bm X \in A_N, |W_r|\le K ~\forall r\in [q], |T_{N,r} - N m_r| \le K\sqrt{N} ~\forall r\in [q]\right)\\&\le& e^{\phi_K} \p(\bm X\in A_N).
\end{eqnarray*}
Hence, we have:
$$\mathbb{Q}(\bm X\in A_N) \le e^{\phi_K} \p(\bm X\in A_N) + \sum_{r=1}^q \mathbb{Q}(|W_r|>K) + \sum_{r=1}^q \mathbb{Q}(|T_{N,r}-Nm_r| > K\sqrt{N})$$
which on letting $N\rightarrow \infty$ followed by $K \rightarrow \infty$ gives $\mathbb{Q}(\bm X \in A_N) \rightarrow 0$. This completes the proof of Lemma \ref{contg7}.
\end{proof}

\section{Other Technical Lemmas}\label{sec:othertechnle}

In this section, we state additional technical lemmas necessary for proving some of the main results of the paper. We start with the following lemma, which is crucial in establishing existence of the partial MPL estimators $\hat{\beta}_N$ and $\hat{\bm B}_N$.
\begin{lem}\label{prevunp}
Define the sets $A_{2,N}, A_{3,N}, A_{4,N}$
     \begin{align*}A_{2,N} = &\{\bm x \in [q]^N:m_{i,x_i}(\bm x) = \min_{r\in [q]}m_{i,r}(\bm x)~\text{for all}~i\in [N]\},\\
     A_{3,N} = &\{\bm x \in [q]^N: m_{i,x_i}(\bm x) = \max_{r\in [q]}m_{i,r}(\bm x)~\text{for all}~i\in [N]\},\\
   A_{4,N} =& \{\bm x \in [q]^N: ~\text{There exists}~r\in [q]~ \text{such that for all}~i \in [N], ~x_i \ne r\},
   \end{align*}
   as in \eqref{eq:a2n}, \eqref{eq:a3n} and \eqref{eq:a4n} respectively. Then, the following conclusions hold:
\begin{enumerate}
    \item [(a)] If $\bm X \in A_{2,N}^c \bigcap A_{3,N}^c$, then for every $\bm B \in \R^{q-1}$, $\ell_N(\beta,\bm B) \rightarrow -\infty$ as $|\beta| \rightarrow \infty$. 

    \item [(b)] If $\bm X \in A_{4,N}^c$, then for every $\beta\in \R$, $\ell_N(\beta,\bm B) \rightarrow -\infty$ as $\|\bm B\|_\infty \rightarrow \infty$.
\end{enumerate}
   
\end{lem}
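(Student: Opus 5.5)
The plan is to argue exactly as in part (a) of Theorem \ref{thm:theorem1}, through the representation \eqref{logitlast6}. Write $h_w(\beta,\bm B)$ for the $w$-th summand of $\ell_N$, so that $\ell_N=\sum_w h_w$ and every $h_w\le 0$. It then suffices to exhibit, along any sequence with the relevant parameter diverging, one index $w$ for which $h_w\to-\infty$. By \eqref{logitlast6}, this happens as soon as some exponent $\beta(m_{w,r}(\bm X)-m_{w,X_w}(\bm X))+(B_r-B_{X_w})$ tends to $+\infty$ for some $r\ne X_w$.

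For part (b), fix $\beta$ and let $\|\bm B\|_\infty\to\infty$, with the convention $B_q=0$. Suppose toward contradiction that $\ell_N$ stays bounded below along some such sequence. Since $\bm X\in A_{4,N}^c$, every colour $r\in[q]$ is attained, say by $X_{u_r}=r$. Boundedness of $h_{u_r}$ then forces, for each $s\ne r$,
\[
\beta\,(m_{u_r,s}(\bm X)-m_{u_r,r}(\bm X))+B_s-B_r\le K .
\]
The interaction terms satisfy $|m_{u,s}-m_{u,r}|\le\gamma$ by \eqref{as1}, so we get $B_s-B_r\le K+|\beta|\gamma$ for every ordered pair. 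Hence all differences $|B_s-B_r|$ are bounded. Taking $r=q$ shows that $\bm B$ itself is bounded, which is a contradiction.

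For part (a), fix $\bm B$ and let $\beta\to+\infty$ or $\beta\to-\infty$. Because $\bm X\notin A_{3,N}$, there is some $i$ with $m_{i,X_i}(\bm X)<\max_r m_{i,r}(\bm X)$; pick $r$ achieving this maximum. As $\beta\to+\infty$, the exponent $\beta(m_{i,r}-m_{i,X_i})+B_r-B_{X_i}$ tends to $+\infty$, so $h_i\to-\infty$. Symmetrically, because $\bm X\notin A_{2,N}$, there is some $j$ with $m_{j,X_j}(\bm X)>\min_r m_{j,r}(\bm X)$. Picking $r$ achieving this minimum, the exponent for $h_j$ tends to $+\infty$ as $\beta\to-\infty$. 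Since every other summand is $\le 0$, in both cases $\ell_N\to-\infty$.

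I do not expect a real obstacle here. The only points needing care are that $\beta$ ranges over all of $\R$ in this lemma, so both signs must be handled, which is exactly why the two sets $A_{2,N}$ and $A_{3,N}$ both appear. In part (b), the bound on $m$-differences must come from \eqref{as1} uniformly in the sequence, because $\beta$ is held fixed there.
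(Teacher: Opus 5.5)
Your proof is correct. Part (a) is the paper's argument: $\bm X\notin A_{3,N}$ handles $\beta\to+\infty$ and $\bm X\notin A_{2,N}$ handles $\beta\to-\infty$. In each case one factor $\theta_{i,X_i}$ is driven to $0$ while every other summand stays $\le 0$.

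Part (b) is argued differently. The paper fixes a coordinate $r$ with $|B_r|\to\infty$ and splits on its sign:
\begin{itemize}
\item if $B_r\to-\infty$, it takes a site with $X_i=r$ and compares with the reference colour $q$ (where $B_q=0$);
\item if $B_r\to+\infty$, it takes a site with $X_i\ne r$ and uses the exponent $\beta(m_{i,r}-m_{i,X_i})+B_r-B_{X_i}$.
\end{itemize}
You instead argue by contradiction, in the style of the end of the proof of Theorem~\ref{thm:theorem1}(a). A lower bound on $\ell_N$ along a subsequence bounds every exponent. Because every colour is attained, this bounds all pairwise differences $B_s-B_r$, and then $\bm B$ itself via $B_q=0$.

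What your route buys is robustness: it handles several coordinates of $\bm B$ diverging together with no case analysis. The paper's case $B_r\to+\infty$, as written with an arbitrary $i$ such that $X_i\ne r$, silently needs $B_{X_i}$ to stay bounded. For example, take $q=3$ and $B_1=B_2\to\infty$ with $X_i=2$. The fix is to choose $i$ with $X_i=q$, which exists because $\bm X\in A_{4,N}^c$. The paper's route, in exchange, is a direct constructive case split that names explicitly which conditional probability vanishes.

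One small remark on your last paragraph: you do not need \eqref{as1} or any uniformity to bound $m_{u,s}(\bm X)-m_{u,r}(\bm X)$. Here $N$ and $\bm X$ are fixed and only $\bm B$ moves, so these are finitely many fixed numbers.
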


\begin{proof}
   To begin with, use \eqref{eq:ln} to note that:
$$\ell_N(\beta,\bm B) =  \sum_{i=1}^N \log(\theta_{i,X_i}),$$ where $\theta_{i,r}$ is as in \eqref{defcondprob881}, and satisfies the inequality
\begin{eqnarray}\label{eq:betair}
\theta_{i,X_i}  = \frac{\exp\{\beta m_{i,X_i}(\bm X) + B_{X_i}\}}{\sum_{t=1}^q \exp\{\beta m_{i,t}(\bm X) + B_t\}}\le \frac{1}{1+\exp\{\beta (m_{i,t}({\bm X})-m_{i,X_i}({\bm X}))+B_t-B_{X_i}\}}
\end{eqnarray}
for all $t\ne X_i$. Since $\theta_{i,r} \le 1$ for all $i,r$, showing $\ell_N(\beta,\bm B) \rightarrow -\infty$ is equivalent to proving that there exists $i \in [N]$ such that $\theta_{i,X_i} \rightarrow 0$. 
\begin{enumerate}
\item[(a)] $|\beta|\to \infty.$
\begin{case}[$\beta\rightarrow -\infty$]
If $\bm X \in A_{2,N}^c$, there exists $i\in [N]$ and $r\in [q]$ such that $m_{i,r}(\bm X) < m_{i,X_i}(\bm X)$ (and so $X_i\ne r$). Then, \eqref{eq:betair} with $t=r$ gives:
$$\theta_{i,X_i} \le \frac{1}{1+ \exp\{\beta(m_{i,r}(\bm X) -m_{i,X_i}(\bm X)) + B_r - B_{X_i}\}}$$ which implies that $\theta_{i,X_i} \rightarrow 0$ as $\beta \rightarrow -\infty$.
\end{case}

\begin{case}[$\beta\rightarrow \infty$]
If $\bm X \in A_{3,N}^c$, there exists $i\in [N]$ and $r\in [q]$ such that $m_{i,r}(\bm X) > m_{i,X_i}(\bm X)$ (and so $X_i\ne r$). Then, \eqref{eq:betair} with $t=r$ gives:
$$\theta_{i,X_i} \le \frac{1}{1+ \exp\{\beta(m_{i,r}(\bm X) -m_{i,X_i}(\bm X)) + B_r - B_{X_i} \}}$$ which implies that $\theta_{i,X_i} \rightarrow 0$ as $\beta \rightarrow \infty$.
\end{case}

\item[(b)] $\|\bm B\|_\infty \rightarrow \infty$

This ensures the existence of an $r\in [q]$ such that $|B_r|\rightarrow \infty$. Also $r\ne q$, as $B_q=0$ by convention.
\begin{case}[$B_r\rightarrow -\infty$]
If $\bm X \in A_{4,N}^c$, there exists $i\in [N]$ such that $X_i = r$, in which case \eqref{eq:betair} with $t=q$ gives:
$$\theta_{i,X_i} \le \frac{1}{1+ \exp\{\beta(m_{i,q}(\bm X) -m_{i,r}(\bm X)) -B_r\}}$$ which implies that $\theta_{i,X_i} \rightarrow 0$ as $B_r \rightarrow -\infty$.
\end{case}

\begin{case}[$B_r\rightarrow \infty$]
If $\bm X \in A_{4,N}^c$, there exists $i\in [N]$ such that $X_i \ne r$ (otherwise the whole vector $\{X_i\}_{1\le i\le N}$ 
 have the same color, which contradicts $A_{4,N}$). Using \eqref{eq:betair} with $t=r$ we have:
$$\theta_{i,X_i} \le \frac{1}{1+ \exp\{\beta(m_{i,r}(\bm X) -m_{i,X_i}(\bm X)) + B_r - B_{X_i} \}}$$ which implies that $\theta_{i,X_i} \rightarrow 0$ as $B_r \rightarrow \infty$.
\end{case}
This completes the proof of Lemma \ref{prevunp}.
\end{enumerate}
\end{proof}

The next result gives a concentration for the vector of conditional probabilities, and will be used to prove Theorem \ref{irrgr}.

\begin{lem}\label{epnt376}
 Suppose $\bm X$ is an observation from the Potts model \eqref{eq:pmf}, where the coupling matrix $\bm A_N$ satisfies the assumptions \eqref{as1}, \eqref{as2} and \eqref{unbddegr}. Let $\mathcal{S}_{N,q}$ denote the set of all $\bm y := ((y_{i,r}))_{i\in [N], r\in [q]} \in [0,1]^{Nq}$, such that $\sum_{r=1}^q y_{i,r} =1$ for all $i\in [N]$. Set the functions $h_N: \mathcal{S}_{N,q}\to \mathbb{R}$ and $I_N: \mathcal{S}_{N,q}\to \mathbb{R}$ as:
$$h_N(\bm y) = \frac{\beta}{2} \sum_{1\le i,j\le N} \sum_{r=1}^q a_{ij} y_{i,r}y_{j,r} + \sum_{i=1}^N \sum_{r=1}^{q} B_r y_{i,r}\quad\text{and}\quad I_N(\bm y) = \sum_{i=1}^N \sum_{r=1}^q y_{i,r}\log y_{i,r}$$ (as in \eqref{hNdef96}), and let $\psi_N(\bm y) = h_N(\bm y) - I_N(\bm y)$ (as in \eqref{psiNdef96}), $\boldsymbol{\theta}(\bm X) = ((\theta_{i,r}(\bm X)))_{i\in [N], r\in [q]}$ and 
    $$\overline{\nabla}(\bm y) = \frac{1}{q}\sum_{r=1}^q \nabla_{\cdot r}(\bm y)$$
(as in \eqref{deftildedelt}) where $\nabla_{\cdot r} (\bm y) := ((\partial \psi_N/\partial y_{i,r}))_{i\in [N]}$ (as in \eqref{2ndstep556}).
Then, we have the following.
    \begin{itemize}
        \item[(a)]~As $N\rightarrow \infty$,
        $$\psi_N({\boldsymbol{\theta}}(\bm X)) = \sup_{\bm y \in \mathcal{S}_{N,q}} \psi_N(\bm y) +o_\p(N). \footnote{By a slight abuse of notation, for a function $\ell:[0,1]^{Nq}\to \mathbb{R}$ and a vector $\bm x \in [q]^N$, we will often refer to $\ell((x_{i,r})_{i\in [N],r\in [q]})$ as $\ell(\bm x)$, where $x_{i,r} :=\mathbbm{1}_{x_i=r}$}$$ 

        \item[(b)]~For all $r\in [q]$, as $N\rightarrow \infty$,
        $$\|\nabla_{\cdot r} (\boldsymbol{\theta}(\bm X))-\onb (\boldsymbol{\theta}(\bm X))\| = o_\p(\sqrt{N}).$$
    \end{itemize}
\end{lem}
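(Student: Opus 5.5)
Part (b) is used in the proof of Theorem \ref{irrgr}, so I will prove it first, by a direct computation that does not go through (a). By \eqref{partder4460} and \eqref{defcondprob881}, at $\bm y=\boldsymbol\theta(\bm X)$ we have
\[
\log\theta_{i,r}(\bm X)=\beta m_{i,r}(\bm X)+B_r-\log\Big(\sum_{s=1}^q e^{\beta m_{i,s}(\bm X)+B_s}\Big).
\]
Substituting this into \eqref{partder4460} gives
\[
\frac{\partial\psi_N}{\partial y_{i,r}}(\boldsymbol\theta(\bm X))=\beta\Big(\sum_{j}a_{ij}\theta_{j,r}(\bm X)-m_{i,r}(\bm X)\Big)-1+\log\Big(\sum_{s} e^{\beta m_{i,s}(\bm X)+B_s}\Big).
\]
The last two terms do not depend on $r$, so they cancel in $\nabla_{\cdot r}-\onb$. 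Writing $\bm D_{\cdot r}:=\bm A_N(\bm X_{\cdot r}-\boldsymbol\theta_{\cdot r}(\bm X))$ with $X_{j,r}=\mathbbm 1_{X_j=r}$, we get
\[
\nabla_{\cdot r}(\boldsymbol\theta(\bm X))-\onb(\boldsymbol\theta(\bm X))=-\beta\Big(\bm D_{\cdot r}-\tfrac1q\textstyle\sum_s\bm D_{\cdot s}\Big).
\]
Hence (b) reduces to showing $\e\|\bm A_N(\bm X_{\cdot r}-\boldsymbol\theta_{\cdot r}(\bm X))\|_2^2=o(N)$ for each $r$ under the mean-field condition \eqref{unbddegr}.

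For this second moment I expand
\[
\sum_i\sum_{j,k}a_{ij}a_{ik}\,\e\big[(X_{j,r}-\theta_{j,r})(X_{k,r}-\theta_{k,r})\big].
\]
The diagonal terms $j=k$ contribute at most $\sum_{i,j}a_{ij}^2=o(N)$ by \eqref{unbddegr}. For $j\ne k$, I condition on $\bm X_{-j}$. The factor $X_{j,r}-\theta_{j,r}$ has conditional mean zero, but $\theta_{k,r}$ depends on $X_j$ through $a_{kj}$. Since $\theta_{k,r}$ is $\beta$-Lipschitz in the local fields, swapping $X_j$ for an independent resample changes it by $O(a_{jk})$. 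So each cross term is $O(a_{jk})$, and the cross terms sum to
\[
O\Big(\sum_{i,j,k}a_{ij}a_{ik}a_{jk}\Big)\le O\big(\operatorname{tr}(\bm A_N^3)\big)\le O\big(\|\bm A_N\|_2\textstyle\sum_{i,j}a_{ij}^2\big)=o(N),
\]
using $\|\bm A_N\|_2\le\gamma$ from \eqref{as1}. Alternatively, I could run the exchangeable-pair argument of Lemma \ref{bp12} on $G=\sum_i D_{i,r}^2$. Markov's inequality then yields (b).

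For (a), the upper bound $\psi_N(\boldsymbol\theta)\le\sup\psi_N$ is trivial, so the lower bound is what needs work. The approach is to compare $\psi_N(\boldsymbol\theta(\bm X))$ with the Hamiltonian minus the log pseudo-likelihood. Using the identity $\sum_r\theta_{i,r}\log\theta_{i,r}=\sum_r\theta_{i,r}(\beta m_{i,r}+B_r)-\log\sum_s e^{\beta m_{i,s}+B_s}$, together with $m_{i,r}\approx(\bm A_N\boldsymbol\theta)_{i,r}$ in $\ell^2$ (the bound just proved, errors $o_\p(N)$ after Cauchy--Schwarz), and Lemma \ref{bp12} to replace $\sum_i\sum_r(X_{i,r}-\theta_{i,r})m_{i,r}$ and $\sum_i(X_{i,r}-\theta_{i,r})$ by $O_\p(\sqrt N)$, I expect
\[
\psi_N(\boldsymbol\theta(\bm X))=\mathcal H_N(\bm X)-\ell_N(\beta,\bm B)+o_\p(N),
\]
where $\mathcal H_N$ is the exponent in \eqref{eq:pmf}. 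It then remains to show $\mathcal H_N(\bm X)-\ell_N\ge\log Z_N(\beta,\bm B)-o_\p(N)$ and to use $\log Z_N=\sup\psi_N+o(N)$, which is the mean-field approximation of \cite{mukherjeebasak} valid under \eqref{unbddegr}.

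I expect the last inequality to be the main obstacle. It says that the sum of the conditional entropies $-\log\theta_{i,X_i}$ nearly matches $-\log\p(\bm X)$, that is, the Gibbs measure has low complexity. Subadditivity of entropy only gives the reverse inequality, so I would instead invoke the structural result of the nonlinear large deviation framework \cite{chatterjee2016nonlinear,mukherjeebasak}. Under the gradient-complexity bound implied by \eqref{unbddegr}, that result says the Potts measure is close to a mixture of product measures whose marginal vectors $\bm y$ satisfy $\psi_N(\bm y)\ge\sup\psi_N-o(N)$, and that $\boldsymbol\theta(\bm X)$ lies within $o(\sqrt N)$ in $\ell^2$ of such a $\bm y$. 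Since $\psi_N$ is Lipschitz with constant $O(\sqrt N)$ in $\ell^2$ on the region where all $y_{i,r}\ge\alpha$, this yields (a).
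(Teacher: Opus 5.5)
\paragraph{Part (b).} Your proof of (b) is correct, and it takes a genuinely different route from the paper's. The paper derives (b) from (a). It shows that any $\bm y\in[p_1,p_2]^{Nq}\cap\mathcal S_{N,q}$ with $\|\nabla_{\cdot r}(\bm y)-\onb(\bm y)\|^2>N\delta$ can be improved by a short step along the colour-projected gradient, so $\psi_N(\bm y)\le\sup\psi_N-Nt\delta/2$. Part (a) then rules this out for $\bm y=\boldsymbol\theta(\bm X)$.

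You instead note that at $\boldsymbol\theta(\bm X)$ the projected gradient is exactly $-\beta$ times the colour-centred version of $\bm A_N(\bm X_{\cdot r}-\boldsymbol\theta_{\cdot r}(\bm X))$. You then bound its second moment by $\sum_{i,j}a_{ij}^2+C\,\mathrm{tr}(\bm A_N^3)\le(1+C\gamma)\sum_{i,j}a_{ij}^2=o(N)$. The conditioning argument for the cross terms is sound; it uses $a_{jj}=0$ and $a_{ij}\ge 0$. Your route is self-contained and gives an explicit rate. It shows that (b), the only part used in Theorem \ref{irrgr}, needs neither (a) nor the net machinery. It also reproves $h_N(\bm X)-h_N(\boldsymbol\theta(\bm X))=o_\p(N)$, which the paper imports from \cite{mukherjeebasak}.

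\paragraph{Part (a): the gap.} Your reduction is fine: $\psi_N(\boldsymbol\theta(\bm X))=h_N(\bm X)-\ell_N(\beta,\bm B)+o_\p(N)$. Only the trivial bound $\log Z_N\ge\sup\psi_N$ is needed, not the full mean-field approximation. So (a) reduces to showing $\ell_N(\beta,\bm B)\le\log\p_{\beta,\bm B}(\bm X)+o_\p(N)$.

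You never prove this inequality. Instead you invoke a ``structural result'': the measure is close to a mixture of near-optimal product measures, and $\boldsymbol\theta(\bm X)$ lies within $o(\sqrt N)$ of one of their marginal vectors. Neither \cite{chatterjee2016nonlinear} nor \cite{mukherjeebasak} proves such a localization of $\boldsymbol\theta(\bm X)$; they give free-energy asymptotics. Moreover, that localization already implies (a) by Lipschitz continuity of $\psi_N$. So the step assumes essentially what is to be proved, and your reduction ends up playing no role.

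\paragraph{The missing idea.} What you need is a counting bound that closes exactly the inequality you flagged:
\[
\sum_{\bm x\in[q]^N}\prod_{i=1}^N\theta_{i,x_i}(\bm x)=e^{o(N)}.
\]
To prove it, take the $\varepsilon\sqrt N$-net of $\{\bm A_N\bm v:\bm v\in[0,1]^N\}$ of cardinality $e^{o(N)}$; this is where \eqref{unbddegr} enters. Replace $\boldsymbol\theta(\bm x)$ by the weights $\bm u(\bm P)$ built from a net point $\bm P$ close to $(m_{\cdot s}(\bm x))_{s}$. This costs a factor $e^{C\varepsilon N}$, using $\theta_{i,r},u_{i,r}\ge\alpha$. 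Then use $\sum_{\bm x}\prod_i u_{i,x_i}(\bm P)=1$ for each $\bm P$. Markov's inequality now gives
\[
\p\Big(\textstyle\sum_i\log\theta_{i,X_i}(\bm X)\ge\log\p_{\beta,\bm B}(\bm X)+N\delta\Big)\le e^{-N\delta}\sum_{\bm x}\prod_i\theta_{i,x_i}(\bm x)\le e^{-N\delta+C\varepsilon N+o(N)},
\]
which tends to $0$ once $\varepsilon$ is small. This completes your route.

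The paper performs the same computation in a different form. It bounds $\p(\bm X\in W_{N,\delta})$ by a ratio whose numerator is at most $e^{r_N-N\delta+O(\varepsilon N)}\sum_{\bm x}e^{g_N(\widetilde{\bm x},\boldsymbol\theta(\bm x))}$, and whose denominator is at least $e^{r_N}$.
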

\begin{proof}
   (a)~ Choosing $b_{itrs}=\mathbbm{1}_{t=r}$ and $g\equiv 1$ in Lemma \ref{bp12} gives $L_i=1$, and so
   $$\p\left(\Big|\sum_{i=1}^N (X_{ir}-\theta_{i,r}({\bm X}))\Big|\ge \sqrt{Nt}\right)\le 2\exp(-Ct).$$
   Similarly, choosing $b_{itrs}=\mathbbm{1}_{t=r},g(x)=x$ and $\lambda=0$ in Lemma \ref{bp12} gives $L_i=1$, and so
   $$\p\left(\Big|\sum_{i=1}^N (X_{ir}-\theta_{i,r}({\bm X}))m_{i,r}({\bm X})\Big|\ge \sqrt{Nt}\right)\le 2\exp(-Ct).$$
   Fixing $\varepsilon>0$, a union bound over all the colors $r\in [q]$ gives $\p(\bm X \in A_{N,\varepsilon}\bigcap B_{N,\varepsilon})\to 1$, where:
    \begin{align*}A_{N,\varepsilon} :=& \left\{\bm x\in [q]^N: \left|\sum_{i=1}^N (x_{i,r} - \theta_{i,r}(\bm x))\right|\le N\varepsilon~\text{for all}~r\in [q]\right\},\\
    B_{N,\varepsilon} :=&\left\{\bm x\in [q]^N: \left|\sum_{i=1}^N (x_{i,r} - \theta_{i,r}(\bm x)) m_{i,r}(\bm x)\right|\le N\varepsilon~\text{for all}~r\in [q]\right\}.
    \end{align*}
    Also, it follows from the proof of Theorem 1.1 in \cite{mukherjeebasak} (see \cite[Lem 3.2]{mukherjeebasak}), that under Conditions \eqref{as1} and \eqref{unbddegr} we have:
    $$\e\left[\left(h_N(\bm X) - h_N(\theta(\bm X)\right)^2\right] = o(N^2)$$ and hence $\p(\bm X \in C_{N,\varepsilon})\to 1$, where
    \begin{align*}
    C_{N,\varepsilon} :=\left\{\bm x\in [q]^N: \left|h_N(\bm x) - h_N({\theta}(\bm x))\right|\le N\varepsilon\right\}.
    \end{align*}
    This shows that $\p(\bm X \in D_{N,\varepsilon})\to 1,$ where $D_{N,\varepsilon} = A_{N,\varepsilon}\bigcap B_{N,\varepsilon}\bigcap C_{N,\varepsilon}$.  Consequently, setting $$W_{N,\delta} := \left\{\bm x\in [q]^N: \psi_N({\theta}(\bm x)) \le r_N - N\delta\right\}$$
    where $r_N := \sup_{\bm y \in \mathcal{S}_{N,q}} \psi_N(\bm y)$, we have:
    \begin{equation}\label{maintermiq}
        \p(\bm X \in W_{N,\delta}) \le \p(\bm X\in W_{N,\delta}\cap D_{N,\varepsilon})+\p(D_{N,\varepsilon}^c)=
         \frac{\sum_{\bm x \in W_{N,\delta}\cap D_{N,\varepsilon}} e^{h_N(\bm x)}}{\sum_{\bm x \in [q]^N} e^{h_N(\bm x)}} + o(1).
    \end{equation}
    Also, the Gibbs variational principle gives a variational mean field lower bound to the denominator of \eqref{maintermiq} as follows (see, for example,  \cite[Eqn 1.8]{mukherjeebasak}):
    \begin{equation}\label{denom73}
        \sum_{\bm x\in [q]^N} e^{h_N(\bm x)} \ge \sup_{\bm x \in \mathcal{P}([q])^N} e^{\psi_N(\bm x)} = e^{r_N}.
    \end{equation}

    The task now is to bound the numerator of the ratio in the right-hand side of \eqref{maintermiq}. Towards this, define $g_N: [0,1]^{2Nq}\to \mathbb{R}$ as
$$g_N(\bm z,\bm w) := \sum_{i=1}^N \sum_{r=1}^{q} z_{i,r}\log w_{i,r}~.$$
Note that $I_N(\bm y) = g_N(\bm y,\bm y)$. It follows from the proof of \cite[Thm 1.1]{mukherjeebasak} (see \cite[Page 575 last display]{mukherjeebasak}) that:
\begin{align}\label{mbbdddf}
   \notag \left|g_N(\widetilde{\bm X}, \boldsymbol{\theta}(\bm X)) - I_N(\boldsymbol{\theta}(\bm X))\right| = &\left|\sum_{i\in [N],r\in [q]} (X_{i,r}-\theta_{i,r}(\bm X))(\beta m_{i,r}(\bm X) + B_r)\right|\\
    \stackrel{D_{N,\varepsilon}}{\le}& (\beta + B)qN\varepsilon
\end{align}
where $B := \|\bm B\|_\infty$, and for any ${\bm x}\in [q]^N$, we denote $\widetilde{\bm x}:=(x_{i,r})_{i\in [N],r\in [q]}\in \mathcal{S}_{N,q}$. At this point, we need the following definition:
    \begin{definition}
        For $S \subseteq \mathbb{R}^N$ and $\varepsilon >0$, a set $D$ is called an $\varepsilon$-net of $S$, if given any $s\in S$ there exists $d\in D$ such that $\|s-d\|_2\le \varepsilon$.
    \end{definition}

The following result (\cite[Lem 3.4]{mukherjeebasak}) guarantees that under Condition \eqref{unbddegr}, the set $\{\bm A_N \bm v,\bm v\in [0,1]^N\}$ has an $\varepsilon\sqrt{N}$-net of size $e^{o(N)}$.

\begin{proposition}
If $\bm A_N$ satisfies Condition \eqref{unbddegr}, then for every $\varepsilon > 0$, the set $$\{\bm A_N \bm v: \bm v \in [0,1]^N\}$$ has an $\varepsilon\sqrt{N}$-net $J_{N,\varepsilon}$ of cardinality $e^{o(N)}$. 
\end{proposition}

For every $\bm p \in J_{N,\varepsilon}$, define:
$$L_r(\bm p) := \left\{\bm x \in [q]^N: \|\bm p - m_{\cdot r}(\bm x)\|\le \varepsilon\sqrt{N}\right\}$$
where $m_{\cdot r}(\bm x) := (m_{i,r}(\bm x))_{i\in [N]}$, and $m_{ir}({\bm x})=\sum_{j=1}^Na_{ij}\mathbbm{1}_{x_j=r}$. Then, we have:
\begin{eqnarray*}
    &&\sum_{\bm x \in W_{N,\delta} \bigcap D_{N,\varepsilon}}  e^{h_N(\bm x)}\\ &\le_{C_{N,\varepsilon}} & e^{N\varepsilon} \sum_{\bm x \in W_{N,\delta} \bigcap D_{N,\varepsilon}}  e^{h_N({\theta}(\bm x))}\\&\le & e^{N\varepsilon(1+q(\beta+B))} \sum_{\bm x \in W_{N,\delta} \bigcap D_{N,\varepsilon}} e^{h_N(\theta(\bm x)) + g_N(\widetilde{\bm x},\theta(\bm x)) - I_N(\theta(\bm x))}\quad \text{(by \eqref{mbbdddf})}\\&\le& e^{N\varepsilon(1+q(\beta+B))} \sup_{\bm x \in W_{N,\delta}} e^{\psi_N(\theta(\bm x))} \sum_{\bm x \in [q]^N} e^{g_N(\widetilde{\bm x},\theta(\bm x))}\\&\le_{W_{N,\delta}}& e^{N\varepsilon(1+q(\beta+B)) + r_N-N\delta} \sum_{\bm x \in [q]^N} e^{g_N(\widetilde{\bm x},\theta(\bm x))}
    \end{eqnarray*}
    Since  $J_{N,\varepsilon}$ is a $\varepsilon\sqrt{N}$ net of the set $\{\bm A_N {\bm v}, {\bm v}\in [0,1]^N\}$, and $\|\bm A_N{\bm v}\|_\infty\le \gamma$ for all ${\bm v}\in [0,1]^N$ (see \eqref{as1}), by replacing $\varepsilon$ by a factor of $2$ if necessary, without loss of generality we can assume $\|{\bm p}\|_\infty\le \gamma$ for all ${\bm p}\in J_{n,\varepsilon}$.
    Thus, noting that
    $m_{\cdot r}(\bm x) = \bm A_N \bm x_{\cdot r}$ where $x_{ir}=\mathbbm{1}_{x_i=r}$, for every $\bm x\in [q]^N$ and every $r\in [q]$, there exists $\bm p \in J_{N,\varepsilon}$ such that $\|\bm p - m_{\cdot r}(\bm x)\| \le \varepsilon\sqrt{N}$, i.e. $\bm x \in L_r(\bm p)$. 
  For $\bm P := (\bm p_1,\ldots,\bm p_q)\in J_{N,\varepsilon}^q$ we can write:
    \begin{equation}\label{netapx}
       \sum_{\bm x \in W_{N,\delta} \bigcap D_{N,\varepsilon}}  e^{h_N(\bm x)} \le  e^{N\varepsilon(1+q(\beta+B)) + r_N-N\delta} \sum_{\bm P \in J_{N,\varepsilon}^q}\sum_{\bm x\in L(\bm P)} e^{g_N(\widetilde{\bm x},\theta(\bm x))}
    \end{equation}
    where $L(\bm P) := \cap_{r\in [q]} L_r(\bm p_r)$. Next, define the matrix $\bm u(\bm P) := (u_{i,r}(\bm P))_{i\in [n],r\in [q]}$, where:
    $$u_{i,r}(\bm P) := \frac{\exp\{\beta p_{i,r} + B_r\}}{\sum_{t=1}^q \exp\{\beta p_{i,t} + B_t\}}\ge q^{-1}\exp(-\beta \gamma-2B)=\alpha,$$
   where $\alpha$ is as in \eqref{alphadef4}. Since $\theta({\bm x})\ge \alpha$ as well, mean-value theorem gives
    \begin{align*}
    |g_N(\widetilde{\bm x},\theta(\bm x)) - g_N(\widetilde{\bm x},\bm u(\bm P))|\le& C_1\sum_{i=1}^N\sum_{r=1}^q|\theta_{i,r}({\bm x})-u_{i,r}({\bm P})|.
    \end{align*}
    Also observe that $$\theta_{i,r}({\bm x})=f_r(m_{i,1}({\bm x}),\cdots,m_{i,q}({\bm x})),\quad u_{i,r}({\bm P})=f_r(p_{i,1},\cdots,p_{i,q}),$$
    where $$f_r(t_1,\cdots,t_q)=\frac{\exp(\beta t_r+B_r)}{\sum_{s=1}^q \exp(\beta t_s+B_s)}$$
    as in \eqref{frdef56}. Since $\|\nabla f_r\|_\infty<\infty$, another mean value theorem gives
    \begin{align*}
|\theta_{i,r}({\bm x})-u_{i,r}({\bm P})|\le &    C_2\sum_{s=1}^q |m_{i,s}(\bm x)-p_{i,s}|
\end{align*}
Combining the last two bounds we get

\begin{align*}
|g_N(\widetilde{\bm x},\theta(\bm x)) - g_N(\widetilde{\bm x},\bm u(\bm P))|\le&C_1 C_2 q \sum_{i=1}^N\sum_{s=1}^q |m_{i,s}(\bm x)-p_{i,s}|\\
\le &C_1C_2q\sqrt{N}\sum_{s=1}^q \| m_{\cdot s}(\bm x)-\bm p_s \|\le Cq \varepsilon N,
    \end{align*}
   where the last equality uses the fact that $\bm x \in L(\bm P)$, and $C=C_1C_2q$.
   
   Hence, from \eqref{netapx},
   \begin{eqnarray*}
   &&\sum_{\bm x \in W_{N,\delta} \bigcap D_{N,\varepsilon}}  e^{h_N(\bm x)}\\ &\le&  e^{N\varepsilon(1+q(\beta+B+C)) + r_N-N\delta} \sum_{\bm P \in J_{N,\varepsilon}^q}\sum_{\bm x\in L(\bm P)} e^{g_N(\widetilde{\bm x},\bm u(\bm P))}\\&\le& e^{N\varepsilon(1+q(\beta+B+C)) + r_N-N\delta} \sum_{\bm P \in J_{N,\varepsilon}^q}\sum_{\bm x\in [q]^N} e^{g_N(\widetilde{\bm x},\bm u(\bm P))}
   \end{eqnarray*}
    Finally, since $u_{i,r}({\bm P})=v_{i,r}$ satisfies $\sum_{r=1}^q v_{i,r}=1$ for any ${\bm P}\in J_{N,\varepsilon}^q$, we have
    \begin{eqnarray*}
    \sum_{\bm x\in [q]^N} e^{g_N(\widetilde{\bm x},\bm u(\bm P))} &=& \sum_{\bm x\in [q]^N}\exp\left\{\sum_{i=1}^N \sum_{r=1}^q x_{i,r}\log v_{i,r}\right\}\\&=& \sum_{\bm x\in [q]^N}\prod_{i=1}^N \prod_{r=1}^q v_{i,r}^{x_{i,r}}\\&=& \prod_{i=1}^N \left(\sum_{\bm x \in [q]}\prod_{r=1}^q v_{i,r}^{x_r} \right)\\&=&\prod_{i=1}^N \sum_{r=1}^q v_{i,r} = 1~.
    \end{eqnarray*}
    Hence, using the fact that $|J_{N,\varepsilon}|=e^{o(N)}$ we have:
    \begin{eqnarray}\label{num73}
         \sum_{\bm x \in W_{N,\delta} \bigcap D_{N,\varepsilon}}  e^{h_N(\bm x)} &\le& e^{N\varepsilon(1+q(\beta+B+C)) + r_N-N\delta} |J_{n,\varepsilon}|^q\nonumber\\ &=& e^{N\varepsilon(1+q(\beta+B+C)) + r_N-N\delta + qo(N)}
    \end{eqnarray}
   Combining \eqref{denom73} and \eqref{num73}, the above display gives
   \begin{equation*}
       \p(\bm X \in W_{N,\delta}) \le e^{N\varepsilon(1+q(\beta+B+C)) -N\delta + qo(N)} + o(1)
   \end{equation*}
    Since $\varepsilon > 0$ is arbitrary, we conclude  that $\p(\bm X \in W_{N,\delta})=o(1)$, i.e. 
    $$\p\left(\sup_{\bm y\in \mathcal{S}_{N,q}} \psi_N(\bm y) - \psi_N(\theta(\bm X)) \ge N\delta\right) = o(1)$$
    for all $\delta > 0$, which implies that $$\sup_{\bm y\in \mathcal{S}_{N,q}} \psi_N(\bm y) - \psi_N(\theta(\bm X)) = o_\p(N)$$ and completes the proof of part (a) of Lemma \ref{epnt376}.
    \vspace{0.1in}
   
    \noindent(b)~Fixing $\delta >0$, note that it suffices to show the following for all $r\in [q]$:
    $$\p\left(\|\nabla_{\cdot r} (\boldsymbol{\theta}(\bm X))-\overline{\nabla}(\boldsymbol{\theta}(\bm X))\|^2 > N\delta\right) = o(1).$$
    To begin with, use the expression of $\theta_{i,r}({\bm X})$ in \eqref{defcondprob881} to note that for all $i\in [N]$ and $r\in [q]$, $$0<p_1 := \frac{1}{1+(q-1)e^{\beta \gamma+2B}} \le \theta_{i,r}(\bm X) \le \frac{1}{1+(q-1)e^{-\beta \gamma-2B}} =: p_2<1,$$
   where $B := \|\bm B\|_\infty$. Now, suppose that $\bm y \in [p_1,p_2]^{Nq}\cap \mathcal{S}_{N,q}$ is such that $\|\textcolor{black}{\nabla_{\cdot r} }(\bm y)-\onb(\bm y)\| ^2 > N\delta$ for some $r\in [q]$, where $\nabla_{.r}$ and $\bar{\nabla}$ are as in the statement of the lemma. 
       Also set $\widetilde{\nabla} (\bm y) := (\onb(\bm y),\ldots,\onb(\bm y))$ as in \eqref{deftildedelt}. Then, setting $\bm y^{(t)} := \bm y + t \left(\nabla\psi_N(\bm y)- \widetilde{\nabla} (\bm y)\right)$, we claim that $\bm y^{(t)} \in [0,1]^{Nq}$ for $t\in [0,\varepsilon]$ for some fixed $\varepsilon > 0$ not depending on $N$. Towards showing this, note from \eqref{partder4460} that:
    $$\|\nabla \psi_N(\bm y)\|_\infty \le \beta\gamma +B+1-\log p_1$$
    and hence, we can take $\varepsilon := \frac{1}{2}(\beta\gamma +B+1-\log p_1)^{-1}\min\{p_1,1-p_2\}$. Since $\bm y \in \mathcal{S}_{N,q}$, and 
    $$\sum_{r=1}^q \left(\nabla\psi_N(\bm y)- \widetilde{\nabla} (\bm y)\right)_{i,r} = 0\text{ for all }i\in [N]\text{ using \eqref{deftildedelt}},$$
    we also have $\bm y^{(t)} \in \mathcal{S}_{N,q}$.
    Now, a two-term Taylor expansion of the function $t\mapsto \psi_N(\bm y^{(t)})$ for $t\in [0,\varepsilon]$ gives some $\xi\in (0,t)$ satisfying:
   
    \begin{eqnarray*}
   && \psi_N(\bm y^{(t)}) - \psi_N(\bm y)\\ &=& t\|\nabla \psi_N(\bm y)-\widetilde{\nabla}(\bm y)\|^2 +\frac{t^2}{2} \left(\nabla \psi_N(\bm y)-\widetilde{\nabla}(\bm y)\right)^\top \nabla^2\psi_N(\bm y^{(\xi)})\left(\nabla \psi_N(\bm y)-\widetilde{\nabla}(\bm y)\right) \\&\ge& t\|\nabla \psi_N(\bm y)-\widetilde{\nabla}(\bm y)\|^2 -\frac{t^2}{2} \|\nabla \psi_N(\bm y)-\widetilde{\nabla}(\bm y)\|^2 \lambda_{\max}(-\nabla^2\psi_N(\bm  y^{(\xi)}))\\&\ge& t\|\nabla \psi_N(\bm y)-\widetilde{\nabla}(\bm y)\|^2 -\frac{t^2}{2} \|\nabla \psi_N(\bm y)-\widetilde{\nabla}(\bm y)\|^2 \|\nabla^2\psi_N(\bm y^{(\xi)})\|_1 \\
   &\ge& t\|\nabla \psi_N(\bm y)-\widetilde{\nabla}(\bm y)\|^2 -\frac{t^2}{2} \|\nabla \psi_N(\bm y)-\widetilde{\nabla}(\bm y)\|^2 \left(\beta\gamma + \frac{1}{p_1}\right)\\&=& t\|\nabla \psi_N(\bm y)-\widetilde{\nabla}(\bm y)\|^2\left[1-\frac{t}{2}\left(\beta\gamma + \frac{1}{p_1}\right)\right].
    \end{eqnarray*}
    In the above display, the last inequality uses  the fact 
    \begin{eqnarray*}
        \nabla^2\psi_N({\bm z})_{ir,js}=&-\frac{1}{z_{ir}}&\text{ if }i=j\text{ and }r=s,\\
        =&\beta a_{ij}&\text{ if }i\ne j\text{ and }r=s,\\
        =&0&\text{ if }r\ne s
    \end{eqnarray*}
    to get the 
    bound $\|\nabla^2\psi_N(\bm z)\|_1\le \beta \gamma+\frac{1}{p_1}$ uniformly in ${\bm z}\in \mathcal{S}_{N,q}$. Choosing $t\in [0,\varepsilon]$ sufficiently small such that $1-\frac{t}{2}(\beta \gamma + \frac{1}{p_1}) \ge  \frac{1}{2}$, we can thus conclude that:
    $$\psi_N(\bm y^{(t)}) - \psi_N(\bm y) \ge \frac{Nt\delta}{2},\quad\text{i.e.}\sup_{\bm y \in [p_1,p_2]^{Nq}: \|\nabla_{\cdot r} (\bm y)-\onb(\bm y)\|^2 > N\delta} \psi_N(\bm y) \le \sup_{\bm y \in \mathcal{S}_{N,q}} \psi_N(\bm y) - \frac{Nt\delta}{2}.$$
    Therefore, on the event $F_r:= \{\|\nabla_{\cdot r} (\boldsymbol{\theta}(\bm X))-\onb (\boldsymbol{\theta}(\bm X))\|^2 > N\delta\}$, we have:
    $$\psi_N(\boldsymbol{\theta}(\bm X)) \le \sup_{\bm y \in \mathcal{S}_{N,q}} \psi_N(\bm y) - \frac{Nt\delta}{2}.$$
    Therefore, we have by part (a),
    $$\p(F_r) \le \p\left(\sup_{\bm y \in \mathcal{S}_{N,q}} \psi_N(\boldsymbol{\theta}(\bm X)) - \psi_N(\boldsymbol{\theta}(\bm X)) \ge \frac{Nt\delta}{2}\right) = o(1)$$ which completes the proof of part (b).
    
\end{proof}

The following is a technical lemma needed in the proof of Theorem \ref{partialestm}.
   
    \begin{lem}\label{c7y}
        Suppose that $w_1,\ldots,w_q \in (\alpha,1]$ with $\alpha >0$, and $\sum_{r=1}^q w_r =1$. Then, for positive real numbers $t_1,\ldots,t_q$ bounded above by $\gamma$, we have:
        $$\max_r t_r - \sum_r w_r t_r \ge \frac{\alpha}{q(q-1)\gamma} \sum_{r<s} (t_r-t_s)^2~.$$
    \end{lem}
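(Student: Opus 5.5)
Let $M := \max_r t_r$ and let $r^*$ be an index attaining it. The plan is to rewrite the left side as a weighted sum of gaps, bound it below by the squared gaps from the maximum, and then compare those with all pairwise squared differences.

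First, since $\sum_r w_r = 1$, we have
$$M - \sum_r w_r t_r = \sum_r w_r (M - t_r).$$
Every term on the right is nonnegative, and $w_r > \alpha$. Hence the left side is at least $\alpha \sum_r (M - t_r)$.

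Next, because $0 < t_r \le \gamma$, each gap satisfies $0 \le M - t_r \le \gamma$. This gives $M - t_r \ge (M-t_r)^2/\gamma$, and therefore
$$M - \sum_r w_r t_r \ge \frac{\alpha}{\gamma}\sum_r (M - t_r)^2 .$$

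It remains to compare $\sum_{r<s}(t_r - t_s)^2$ with $\sum_r (M-t_r)^2$. Write $d_r := M - t_r \ge 0$, so that $(t_r - t_s)^2 = (d_r - d_s)^2$. Since $d_r, d_s \ge 0$, we have $(d_r - d_s)^2 \le \max(d_r,d_s)^2 \le d_r^2 + d_s^2$. Summing over pairs $r<s$, each index appears in exactly $q-1$ pairs, so
$$\sum_{r<s}(t_r-t_s)^2 \le (q-1)\sum_r d_r^2 .$$
Combining the displays yields
$$M - \sum_r w_r t_r \ge \frac{\alpha}{(q-1)\gamma}\sum_{r<s}(t_r - t_s)^2 .$$

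This is stronger than the claimed bound by a factor of $q$. Since $q \ge 1$, the stated inequality with constant $\frac{\alpha}{q(q-1)\gamma}$ follows immediately. None of the steps is delicate. The only points needing care are that all gaps $d_r$ are nonnegative, which justifies both $d \ge d^2/\gamma$ and $(d_r - d_s)^2 \le d_r^2 + d_s^2$, and the count of $q-1$ pairs per index.
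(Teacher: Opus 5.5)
Your proof is correct, and it takes a genuinely different route from the paper's.

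The paper assumes without loss of generality that $t_1=\max_r t_r$ and $t_2=\min_r t_r$. It uses $t_r\le t_1$ for $r\ge 3$ to obtain $\max_r t_r-\sum_r w_rt_r\ge w_2(t_1-t_2)\ge\alpha(t_1-t_2)$. This keeps only the weight on the minimizing index and reduces everything to the range. It then bounds each $|t_r-t_s|$ by the range, averages over the $\binom{q}{2}$ pairs, and converts $|t_r-t_s|$ into $(t_r-t_s)^2/(2\gamma)$.

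You instead keep the exact identity $M-\sum_r w_rt_r=\sum_r w_r(M-t_r)$ and use the lower bound on every weight. You turn each gap $d_r\in[0,\gamma]$ into $d_r^2/\gamma$ separately, and then compare with the pairwise squares via $(d_r-d_s)^2\le d_r^2+d_s^2$.

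Your route yields the constant $\alpha/((q-1)\gamma)$. This is a factor $q$ better than the stated one and essentially sharp: take $t$ close to $(\gamma,\ldots,\gamma,0)$ and $w_q$ close to $\alpha$. The paper's chain gives up this constant in three places: it retains a single weight, it dominates every pair by the range, and it uses $|t_r-t_s|\le 2\gamma$ rather than $\le\gamma$.

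In exchange, the paper's route needs a lower bound on only one weight. It also applies verbatim to $t_r\in[-\gamma,\gamma]$; yours would need $d_r\le 2\gamma$ there, though this still suffices for the stated constant.

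For the application in the proof of Theorem~\ref{partialestm}, only positivity of the constant matters. The hypotheses actually used there are $w_r\ge\alpha$ and $t_r\in[0,\gamma]$, and your argument, like the paper's, goes through unchanged under them. Both arguments tacitly require $q\ge 2$.
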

    \begin{proof}
        Without loss of generality, suppose that $t_1=\max_r t_r$ and $t_2 = \min_r t_r$. Then,
        \begin{eqnarray*}
        \max_r t_r - \sum_r w_r t_r &=& t_1(1-w_1) - t_2w_2 - \sum_{r\ge 3} t_rw_r\\&\ge& t_1\left (1-w_1-\sum_{r\ge 3} w_r\right) - t_2 w_2\\&=& t_1 w_2 - t_2 w_2\\ &\ge& \alpha |t_r-t_s|
        \end{eqnarray*}
        for every $r,s$. Hence, using the fact $\max_{r<s} |t_r-t_s| \le 2\gamma$, we have 
        \begin{eqnarray*}
            \max_r t_r - \sum_r w_r t_r &\ge& \frac{2\alpha}{q(q-1)} \sum_{r<s} |t_r-t_s|\\&\ge& \frac{\alpha}{q(q-1)\gamma} \sum_{r<s} (t_r-t_s)^2
        \end{eqnarray*}
        This completes the proof of Lemma \ref{c7y}.
    \end{proof}

\end{appendix}

\end{document}